\newcommand{\zerodisplayskips}{%
  \setlength{\abovedisplayskip}{4.5pt}%
  \setlength{\belowdisplayskip}{4.5pt}%
  \setlength{\abovedisplayshortskip}{4.5pt}%
  \setlength{\belowdisplayshortskip}{4.5pt}}
\appto{\normalsize}{\zerodisplayskips}
\appto{\small}{\zerodisplayskips}
\appto{\footnotesize}{\zerodisplayskips}
\definecolor{vert}{RGB}{0,128,0}
\definecolor{rouge}{RGB}{255,0,0}
\definecolor{bleu}{RGB}{0,0,255}
\definecolor{orange}{RGB}{255,80,0}
\newtheorem{theorem}{Theorem}
\newtheorem{definition}{Definition}
\newtheorem{proposition}{Proposition}
\newtheorem{lemma}{Lemma}
\newtheorem{corollary}{Corollary}
\newcounter{step}
\newcommand{\m}{\mathbb}
\newcommand{\1}{\mathds{1}}
\newcommand{\E}{\ensuremath{\mathds{E}}}
\newcommand{\N}{\ensuremath{\mathds{N}}}
\renewcommand{\phi }{\varphi }
\def \1{\mathbf{1}}
\renewcommand{\P }{\mathds{P}}
\renewcommand{\geq }{\geqslant }
\renewcommand{\leq }{\leqslant }
\DeclareMathOperator*{\bary}{Bar}
\begin{document}
\bibliographystyle{plain}

\title{History-dependent evaluations in POMDPs}
\date{}
\author{Xavier Venel \thanks{CES, PSE-Universit\'e Paris 1 Panth\'eon Sorbonne, Paris. France. Email: xavier.venel@univ-paris1.fr
}, Bruno Ziliotto\thanks{CEREMADE, CNRS, Universit\'e Paris Dauphine, PSL Research Institute, Paris, France.
}}


\maketitle

\begin{abstract}
We consider POMDPs in which the weight of the stage payoff depends on the past sequence of signals and actions occurring in the infinitely repeated problem. We prove that for all $\varepsilon>0$, there exists a strategy that is $\varepsilon$-optimal for any sequence of weights satisfying a property that interprets as ``the decision-maker is patient enough". This unifies and generalizes several results of the literature, and applies notably to POMDPs with limsup payoffs.  
\end{abstract}

%

\underline{Keywords:} Markov decision processes, Partial Observation, Long-run average payoff.\\

\vspace{-0.2cm}
\section*{Introduction}

A Partially Observable Markov Decision Process (POMDP) is a discrete-time dynamic decision model in which at each stage, the decision-maker takes a decision that determines together with the current state a stage payoff. The state follows a controlled Markov chain. The decision-maker may not be informed of the current state, but receives a signal at each stage. Thus, POMDPs generalize the Markov Decision Process (MDP) model, introduced by Bellman \cite{Bellman_57} and studied by Blackwell \cite{B62}, where the decision-maker knows the state. Their study was initiated independently by various authors \cite{aastrom1965optimal}, \cite{drake1962observation}, \cite{shiryaev1966theory}.
POMDPs are closely related to MDPs with Borel space. Indeed, a standard way to analyze them is to consider a MDP on an auxiliary state space, the belief space (see \cite{rhenius74}, \cite{sawaragi70}, \cite{yushkevich76}). The fact that this auxiliary state space is infinite makes the analysis of POMDPs quite delicate. Indeed, even the belief dynamics generated by a ``simple" strategy (e.g., belief-stationary) can be erratic. To recover some regularity on this dynamics, many papers consider some irreducibility-type assumption on the transition function of the POMDP (see e.g. \cite{Arapostathis_93,Borkar_1988,Borkar_2000}). In this paper, we will not need this type of assumption, and consider the POMDP model in its full generality.      \\
This paper concerns POMDPs with long duration, where state, action and signal sets are finite. Considerable work has been devoted to the definition and characterization of the optimal long-term payoff (see \cite{Arapostathis_93}). 
We emphasize below several results of the literature with regard to this question, that are intimately related to our paper:

\begin{itemize}
\item {\em $n$-stage problem and discounted problem.}
 A first standard approach is to consider the $n$-stage problem, with average expected payoff $\m{E}(\frac{1}{n}\sum_{m=1}^n r_m)$, where $r_m$ stands for the payoff at stage $m$. The long-term optimal payoff can be defined as the limit of the $n$-stage value, as $n$ tends to infinity (asymptotic value). By Rosenberg, Solan and Vieille \cite{RSV02}, this limit exists and coincides with the limit of the discounted value, as the discount factor vanishes. 

\item {\em Uniform value.}
 Another standard approach is to define the payoff in the infinite problem as being the 
long-run average payoff criterion $\liminf_{n \rightarrow +\infty} \m{E}(\frac{1}{n}\sum_{m=1}^n r_m)$ (see Arapostathis et al. \cite{Arapostathis_93}), and consider the value of this problem (\textit{uniform value}). Again, by Rosenberg, Solan and Vieille \cite{RSV02}, this coincides with the asymptotic value.

\item {\em Liminf (average) evaluation.}
 A third approach is to define the payoff in the infinite problem as being  $ \m{E}(\liminf_{n \rightarrow +\infty}\frac{1}{n}\sum_{m=1}^n r_m)$, as studied in Gillette [14]. The authors \cite{VZ16} have proved that the value of this problem coincides with the asymptotic value. 

\item {\em General uniform value.}
 As an extension of the first approach, Renault and Venel \cite{RV17} consider problems with weighted payoff: the payoff is $\m{E}\left(\sum_{m \geq 1} \theta_m r_m\right)$, where the $\theta_m$ are positive numbers that sum to 1. They define the impatience of $\theta$ as being the total variation $I(\theta)=\sum_{m \geq 1} | \theta_{m+1}-\theta_m|$. They proved that the value of the problem where the total payoff is the inferior limit of $\m{E}\left(\sum_{m \geq 1} \theta_m r_m\right)$, as $I(\theta)$ tends to 0, is again the asymptotic value. 
\end{itemize}
The first and second result can be deduced either from the third or from the fourth. Though the third and fourth result are seemingly independent, the following connection can be made:
one can see the liminf evaluation as a payoff with random weights, where the weights depend on the history. This observation was the starting point of this paper.  Since these four notions of value coincide in finite POMDPs, we will simply call them the {\em asymptotic value} in the following.

 \paragraph{Contribution of the paper}
This paper unifies and generalizes the above results, and aims at finding the largest possible class of optimal long-term payoffs that coincides with the asymptotic value. For this purpose, we introduce the problem with history-dependent evaluations. As in \cite{RV17}, we consider weighted payoffs, but we allow $\theta_m$ to depend on the past observed history of the problem: that is, the sequence of actions and signals that are generated before stage $m-1$ (included). 
We define the \textit{irregularity} of an evaluation as being  $\E_{x_1}^\sigma \left(|\theta_1|+\sum_{m=1}^{+\infty} |\theta_m- \theta_{m+1}|\right)$. 
The first main result of this paper is to prove that the value of the problem where the total payoff is the inferior limit of $\m{E}\left(\sum_{m \geq 1} \theta_m r_m\right)$, as the irregularity tends to 0, is equal to the asymptotic value. This generalizes all the results mentioned previously.
Moreover, we provide several classes of weighted payoffs for which none of the previous results applied. We give several examples that illustrate the tightness of our assumptions. 
 
In addition, our result has the following significant consequence: the value of the problem with 
total payoff 
$\m{E}\left(\limsup_{n \rightarrow +\infty} \frac{1}{n} \sum_{m=1}^n g_m \right)$ is also the asymptotic value, where $g_m$ is the expectation of the stage payoff with respect to the belief at stage $m$. 
The main difficulty for deducing this result is that the limsup depends on the behavior in the long run, hence the relative weight of the payoff at stage $m$ may depend on the future. 

A straightforward consequence is that, when the decision-maker is informed of his payoffs (\textit{POMDP with known payoffs}), then the value of the problem with total payoff
\\
$\m{E}\left(\limsup_{n \rightarrow +\infty} \frac{1}{n} \sum_{m=1}^n r_m\right)$ is also equal to the asymptotic value. Last, we present an example that shows that in the absence of the known-payoff assumption, the value of the above problem may be strictly greater than the asymptotic value. 

Section 1 states the model and main definitions, and Section 2 presents our contributions. Section 3 gives a sketch of the main results' proofs, and compare them with previous literature techniques. Sections 4 and 5 prove the two theorems.

\section{Model}
\subsection{POMDPs}

Let us start with a few notations.
We denote by $\m{N}^*$ the set of strictly positive integers. If $A$ is a measurable space, we denote by $\Delta(A)$ the set of probability measures over $A$. If $a\in A$, we denote by $\delta_a$ the Dirac mass at $a$. If $(A,d)$ is a compact metric space, we will always equip $(A,d)$ with the Borel $\sigma$-algebra, and denote by $\mathcal{B}(A)$ the set of Borel subsets of $A$. The set of continuous functions from $A$ to $[-1,1]$ is denoted by $\mathcal{C}(A,[-1,1])$. The set $\Delta(A)$ is compact metric for the Kantorovich-Rubinstein distance $d_{KR}$, which metrizes the weak$^*$ topology. Recall that the distance $d_{KR}$ is defined for all $z$ and $z'$ in $\Delta(A)$ by
\[
d_{KR}(z,z'):=\sup_{f \in E_1} \left|\int_A f(x) z(dx)-\int_A f(x) z'(dx) \right|=\inf_{\pi \in \Pi(z,z')} \int_{A\times A} d(x,y) \pi(dx,dy),
\]
where $E_1\subset \mathcal{C}(A,[-1,1])$ is the set of $1$-Lipschitz functions from $(A,d)$ to $[-1,1]$ and $\Pi(z,z') \subset \Delta(A \times A)$ is the set of measures on $A \times A$ with first marginal $z$ and second marginal $z'$. Because $A$ is compact, the infimum is a minimum. When $A$ is finite and $d$ is the discrete metric, then $d_{KR}$ is the $L^1$-norm on $\Delta(A)$. For $f \in \mathcal{C}(A,[-1,1])$, the linear extension of $f$ is the function $\hat{f} \in \mathcal{C}(\Delta(A),[-1,1])$, defined for $z \in \Delta(A)$ by
\begin{equation*}
\hat{f}(z):=\int_{A} f(x) z(dx).
\end{equation*}

A (finite) \textit{Partially Observable Markov Decision Process} $\Gamma=(K,I,S,q,r)$ is defined by the following elements:
\begin{itemize}
\item $K$ is a finite \textit{state space},
\item $I$ is a finite \textit{action space},
\item $S$ is a finite \textit{signal space},
\item $q: K\times I \rightarrow \Delta(K \times S)$ is the \textit{transition} function that associates to each pair (state, action) a distribution over states and signals,
\item $r: K \times I \rightarrow [0,1]$ is the \textit{payoff function}.
\end{itemize}
Throughout the paper, we use the notation $X:=\Delta(K)$. 
This article only considers POMDPs defined by finite sets, and therefore the finiteness assumption will not be stated anymore. 

\noindent Given $x_1 \in X$, the \textit{POMDP starting from $x_1$},  denoted by $\Gamma(x_1)$, proceeds in the following way:
\begin{itemize}
\item
Before the game starts, an initial state $k_1$ is drawn according to the distribution $x_1$. The decision-maker knows $x_1$ but not $k_1$.
\item
At each stage $m \geq 1$, the decision-maker chooses $i_m \in I$, and receives a payoff $r(k_m,i_m)$. A pair $(k_{m+1},s_m)$ is drawn according to $q(k_m,i_m)$. The next state is $k_{m+1}$, and the decision-maker is informed of $s_m$. 
\end{itemize}
The sequence $(k_1,i_1,s_1,\dots,k_m,i_m,s_m,\dots)$ is called a \textit{play}. The set of plays is denoted by $H_\infty=(K\times I \times S)^{\m{N}}$. For every $n\geq 1$, we define the set of \textit{histories of length $n$} by $H_n=(K\times I \times S)^{n-1}\times K$ and the set of \textit{histories} by $H_f=\cup_{n\geq 1} H_n$.\\

We denote by $H^o_\infty=(I \times S)^{\m{N}}$ the set of \textit{observed plays}. For every $n\geq 1$, we define the set of \textit{observed histories at stage $n$} by $H^o_n=(I \times S)^{n-1}$ and the set of \textit{observed histories} by $H^o_f=\cup_{n\geq 1} H^o_n$.\\

\begin{definition}
A {\em behavior strategy} $\sigma$ of the decision-maker is a function from $H^o_f$ to $\Delta(I)$. It is said to be {\em pure} if for every $h\in H^o_f$, $\sigma(h)$ is a Dirac mass at some action $i\in I$. The set of pure strategies is denoted by $\Sigma_p$. \\
\end{definition}
\begin{definition}
A pure strategy $\sigma$ is said to have \emph{finite memory} if it can be modeled by a finite-state transducer. Formally, such a strategy is described as $\sigma=(\sigma_u,\sigma_a,M,m_0)$, where $M$ is a finite set of memory states, $m_0$ is the initial memory state, $\sigma_a:M \rightarrow I$ is the action selection function and $\sigma_u:M\times I \times S \rightarrow M$ is the memory update function.
\end{definition}

\subsection{History-dependent evaluation}

Throughout the paper, we identify a history $h_m$ and the subset of plays that have prefix $h_m$. This subset of plays is called a \emph{finite cylinder}. We denote by $\mathcal{F}$ the $\sigma$-field on $H_\infty$ generated by finite cylinders. An initial probability $x_1 \in X$ and a behavior strategy $\sigma$ naturally induce a probability measure on the set of finite cylinders, that extends in a unique way to $H_{\infty}$, by the Kolmogorov extension theorem. This probability measure is denoted by $\m{P}_{x_1}^{\sigma}$ and the expectation under $\m{P}$ is denoted by $\E_{x_1}^\sigma$.\\

Similarly, an observed play and an observed history can be seen as subsets of plays. We denote by $\mathcal{F}_m^o$ the $\sigma$-field on $H_\infty$ generated by observed histories of length $m$ and by $\mathcal{F}^o$ the $\sigma$-field generated by observed histories.

\begin{definition} \label{def:eval}
An \emph{evaluation} is a sequence of functions $\theta=(\theta_m)_{m\geq 1}$ from $H_\infty$ to $[0,1]$. It is said to be
\begin{itemize}
\item
\emph{history-dependent} if for every $m\geq 1$, $\theta_m$ is measurable with respect to $\mathcal{F}^o_m$.
\item \emph{normalized} if for every $h_\infty \in H_\infty$, 
$
\displaystyle \sum_{m=1}^{+\infty} \theta_m(h_\infty)=1.
$
\item \emph{normalized in expectation at $(x_1,\sigma)$} if 
$
\displaystyle
\E_{x_1}^\sigma\left(\sum_{m=1}^{+\infty} \theta_m(h_\infty) \right)=1.
$
\end{itemize}
We denote by $\Theta$ the set of evaluations and by $\Theta_{\mathcal{N}}$ the set of history-dependent normalized evaluations.
\end{definition}


%
%
%

Let us compare our definition with literature. Two standard evaluations are the $n$-stage evaluations and the $\lambda$-discounted evaluations. They correspond to the case where for all $m \geq 1$, $\theta_m$ is deterministic, and $\theta_m=\frac{1}{n} 1_{m \leq n}$ for the former, and $\theta_m=\lambda(1-\lambda)^{m-1}$ for the latter. Renault and Venel \cite{RV17} consider more general evaluations, but that are still deterministic. 

Neyman and Sorin \cite{NS10} consider $n$-stage evaluations, in which $n$ is a random variable. This is not a particular case of our model, since the latter features evaluations which are deterministic functions of the past observed history, while the random variable $n$ in the former follows a history-independent process. 
 \\

%
%

Let $x_1 \in X$ and $\theta \in \Theta$. The problem starting from $x_1$ and with evaluation $\theta$, denoted by $\Gamma_{\theta}(x_1)$, has payoff function:
\begin{align*}
\forall \sigma \in \Sigma, \quad \gamma_\theta(x_1,\sigma) &:=\E^{\sigma}_{x_1}\left( \sum_{m=1}^{+\infty} \theta_m r(k_m,i_m) \right).
\end{align*}
The value $v_\theta(x_1)$ of this problem is the maximum expected payoff with respect to behavior strategies:
\begin{align}\label{vn}
v_\theta(x_1):=\sup_{\sigma \in \Sigma} \gamma_\theta(x_1,\sigma)=\sup_{\sigma \in \Sigma_p} \gamma_\theta(x_1,\sigma).
\end{align}
The fact that the supremum can be taken over pure strategies is a consequence of Feinberg \cite[Theorem 5.2]{F96}.  For every $\varepsilon>0$, a strategy $\sigma$ is said to be {\em $\varepsilon$-optimal in $\Gamma_\theta(x_1)$} if
\[
\gamma_{\theta}(x_1,\sigma) \geq v_{\theta}(x_1)-\varepsilon.
\] 
In particular, when $\theta_m=\frac{1}{n} 1_{m \leq n}$ is the deterministic $n$-stage evaluation ($n \geq 1$), the corresponding value is denoted by $v_n(x_1)$. Moreover, the following result is known:

\begin{proposition}
[Rosenberg, Solan and Vieille \cite{RSV02}] Let $\Gamma$ be a POMDP and $x_1 \in X$ an initial distribution. The sequence $(v_n(x_1))_{n\geq 1}$ converges to some limit $v^*(x_1)$, called the {\em asymptotic value}.
\end{proposition}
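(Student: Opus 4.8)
The plan is to recast the POMDP $\Gamma(x_1)$ as a Markov Decision Process on the compact belief space $X=\Delta(K)$ and to obtain the limit through the operator approach. The conditional law $x_m\in X$ of the current state given the observed history $(i_1,s_1,\dots,i_{m-1},s_{m-1})$ is a sufficient statistic: it updates deterministically as $x_{m+1}=T(x_m,i_m,s_m)$, the signal $s_m$ being drawn from a kernel $p(\,\cdot\mid x_m,i_m)$, and the expected stage payoff is the linear extension $\hat r(x_m,i_m)$. Setting $V_n:=n\,v_n$, the dynamic programming principle reads $V_0\equiv 0$ and $V_n=\Psi(V_{n-1})$, where the Shapley operator $\Psi$ acts on bounded functions $f\colon X\to\m{R}$ by
\[
\Psi(f)(x)=\max_{i\in I}\Big[\hat r(x,i)+\sum_{s\in S}p(s\mid x,i)\,f\big(T(x,i,s)\big)\Big].
\]
Thus $v_n=\Psi^{n}(0)/n$, and $\Psi$ is nonexpansive for the supremum norm.

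The first step I would carry out is to prove that $(v_n)_{n\ge1}$ is equicontinuous, uniformly in $n$. The key observation is that, for a fixed pure strategy $\sigma\in\Sigma_p$ (which reacts only to the observed history), conditioning on the initial state $k_1=k$ yields $\m{P}^{\sigma}_{x}=\sum_k x(k)\,\m{P}^{\sigma}_{\delta_k}$, so that $\gamma_n(x,\sigma)$ is affine in $x$. Hence $v_n=\sup_{\sigma\in\Sigma_p}\gamma_n(\cdot,\sigma)$ is a supremum of affine maps valued in $[0,1]$: it is convex and, by a direct estimate, Lipschitz for $d_{KR}$ with a constant independent of $n$. Since $X$ is compact and $0\le v_n\le 1$, Arzel\`a--Ascoli makes $(v_n)$ relatively compact in $(\mathcal C(X,[-1,1]),\|\cdot\|_\infty)$, and the same bounds apply to the discounted values $v_\lambda$ (the unique fixed points of the contractions $f\mapsto\max_i[\lambda\hat r(\cdot,i)+(1-\lambda)\sum_s p(s\mid\cdot,i)f(T(\cdot,i,s))]$). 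It then suffices to show that all accumulation points of these families coincide.

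This uniqueness is the main obstacle. A Tauberian theorem reduces it to the convergence of $v_\lambda$ as $\lambda\to0$, through the comparison of the Ces\`aro averages defining $v_n$ and the Abel averages defining $v_\lambda$. On a finite state space the latter would follow at once from Blackwell optimality and the semi-algebraic (Puiseux) expansion of $\lambda\mapsto v_\lambda$; the real difficulty is that the belief MDP lives on the infinite-dimensional space $\mathcal C(X,[-1,1])$, where the finite-dimensional nonexpansive-operator theorems do not apply. I would resolve this in the spirit of Rosenberg, Solan and Vieille: approximate the belief MDP by finite MDPs --- discretizing $X$ on a grid compatible with the update map $T$, or rounding beliefs along truncated histories --- each of which has a convergent discounted value by the finite theory, and then transfer convergence to the limit using the uniform equicontinuity above to bound the approximation error uniformly in $\lambda$. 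Within the operator framework this amounts to checking that every uniform accumulation point of $(v_\lambda)$ is a fixed point of the limiting averaging operator and that this fixed point is unique, which forces $\lim_{\lambda\to0}v_\lambda$ to exist; by the Tauberian step, $(v_n)$ then converges to the same limit $v^{*}(x_1)$.
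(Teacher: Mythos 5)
Note first that the paper does not prove this proposition: it is imported verbatim from Rosenberg, Solan and Vieille \cite{RSV02}, so your attempt must be judged against the actual known proofs of that result. Your preparatory steps are correct and standard: the reduction to the belief MDP on $X=\Delta(K)$ with Bayes update $T$ and Shapley operator $\Psi$; the observation that for a fixed observation-based strategy $\m{P}^{\sigma}_{x}=\sum_k x(k)\m{P}^{\sigma}_{\delta_k}$, so each $\gamma_n(\cdot,\sigma)$ is affine with values in $[0,1]$, whence $(v_n)$ (and $(v_\lambda)$) is uniformly Lipschitz and relatively compact by Arzel\`a--Ascoli; and the Tauberian reduction (Lehrer--Sorin) of the convergence of $(v_n)$ to that of $(v_\lambda)$, which is legitimate here precisely because equicontinuity upgrades pointwise to uniform convergence. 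The problem is that all of this is the easy half; it only shows that accumulation points exist.

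The decisive step---convergence of $v_\lambda$ on the belief space---is where your argument has a genuine gap. You propose to discretize $X$ on a finite grid, invoke the finite-MDP (Blackwell/Puiseux) theory on each discretization, and ``transfer convergence to the limit using the uniform equicontinuity above to bound the approximation error uniformly in $\lambda$.'' This transfer fails. First, no finite grid is invariant under the Bayes maps $T(\cdot,i,s)$ (reachable beliefs are typically infinite, even dense), so the discretized dynamics must round at every stage. A per-stage rounding error $\delta$ then propagates through the discounted fixed-point equation: writing $e=\|v_\lambda-w^{\delta}_\lambda\|_\infty$ for the discretized value $w^{\delta}_\lambda$, nonexpansiveness only gives $e\leq(1-\lambda)(L\delta+e)$, i.e.\ $e\lesssim\delta/\lambda$, which blows up as $\lambda\to 0$ for any fixed grid. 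Equicontinuity of $v_\lambda$ controls a one-shot comparison of values at nearby beliefs; it does nothing against this compounding of the dynamics error over the effective horizon $1/\lambda$, and one cannot interchange the limits $\delta\to 0$ and $\lambda\to 0$. Your fallback---that every accumulation point of $(v_\lambda)$ is a fixed point of a ``limiting averaging operator'' with a unique fixed point---also cannot work as stated: the natural limiting equation $w(x)=\max_i\sum_s p(s\mid x,i)\,w(T(x,i,s))$ is satisfied by every constant function, so uniqueness fails trivially. This obstruction is exactly why the proposition is hard: the proofs that do exist (Rosenberg--Solan--Vieille's construction of uniform guarantees by concatenating discounted-optimal strategies, using supermartingale properties of $v_\lambda$ along the belief process; Renault's later proof via total boundedness of the doubly-indexed family $(v_{m,n})$) are global arguments on the belief space, not state-space discretizations. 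As written, your proposal establishes compactness but not convergence, which is the content of the theorem.
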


\section{Contributions}
\subsection{Weighted value}

The first definition generalizes the impatience introduced in Renault and Venel \cite{RV17} to history-dependent evaluations. 

\begin{definition}
For every evaluation $\theta$, every $x_1\in X$ and every strategy $\sigma$, we define $I(\theta,x_1,\sigma)$ in $[0,+\infty]$ by
\[
I(\theta,x_1,\sigma)= \E_{x_1}^\sigma \left(|\theta_1|+\sum_{m=1}^{+\infty} |\theta_m- \theta_{m+1}|\right),
\]
and the \emph{irregularity} by
\[
I(\theta,x_1)=\sup_{\sigma \in \Sigma} I(\theta,x_1,\sigma).
\]
\end{definition}

The following definition adapts the classical notion of uniform value to history-dependent normalized evaluations. 

%
%


\begin{definition}
Let $x_1 \in X$. The POMDP $\Gamma(x_1)$ has a \emph{weighted value} $v_{\infty}(x_1) \in [0,1]$ if 
\begin{itemize}
\item for all $\varepsilon>0$, there exists $\alpha>0$ such that for all $\theta \in \Theta_{\mathcal{N}}$,
\[
I(\theta,x_1) \leq \alpha \text{ implies that } v_{\theta}(x_1) \leq  v_\infty(x_1)+\varepsilon.
\]

\item for all $\varepsilon>0$, there exists $\alpha>0$ and $\sigma^*\in \Sigma$ such that:
\begin{align}\label{vu}
 \forall \, \theta \in \Theta_{\mathcal{N}} \  (I(\theta,x_1)<\alpha) \text{ implies that } \gamma_\theta(x_1,\sigma^*) \geq v_\infty(x_1)-\varepsilon.
\end{align}
\noindent A strategy that satisfies the above condition is called an \emph{$\varepsilon$-weighted-optimal strategy}. 
\end{itemize}
\end{definition}

\begin{theorem}\label{uniform}
Let $\Gamma$ be a POMDP. For every $x_1\in X$, $\Gamma(x_1)$ has a weighted value equal to the asymptotic value $v^*(x_1)$.  Moreover, for every $\varepsilon>0$, the decision-maker has an  $\varepsilon$-weighted-optimal pure strategy with a finite memory.
\end{theorem}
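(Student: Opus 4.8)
The plan is to establish separately the two inequalities in the definition of the weighted value, after reducing everything to the observed filtration. By \eqref{vn} it suffices to treat pure strategies; for such a $\sigma$, the action $i_m$ and the belief $x_m\in X$ are $\mathcal{F}^o_m$-measurable, and since $\theta_m$ is $\mathcal{F}^o_m$-measurable too, the tower property yields
\[
\gamma_\theta(x_1,\sigma)=\E_{x_1}^\sigma\Big(\sum_{m\ge 1}\theta_m\,\hat r(x_m,i_m)\Big)=\E_{x_1}^\sigma\Big(\sum_{m\ge 1}\theta_m\,g_m\Big),\qquad g_m:=\hat r(x_m,i_m)\in[0,1].
\]
Thus the unobserved payoff $r(k_m,i_m)$ may be replaced by the belief-payoff $g_m$, and weights and payoffs become jointly adapted to $(\mathcal{F}^o_m)$. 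I would then rely on the standard regularity of the belief process: the $n$-stage values $v_n$ are convex and uniformly Lipschitz on $X$, hence converge to $v^*$ uniformly; the belief dynamics $x_{m+1}=\Phi(x_m,i_m,s_m)$ leave $v^*$ invariant, $v^*(x)=\max_i\E_s v^*(\Phi(x,i,s))$, so $(v^*(x_m))_m$ is a supermartingale under every strategy; and, as recalled in the introduction, the belief MDP has a uniform value equal to $v^*$.

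For the upper bound I would fix $\varepsilon>0$, pick $N$ with $v_n\le v^*+\varepsilon$ on $X$ for $n\ge N$, and use summation by parts pathwise. With $G_n=\sum_{m\le n}g_m$ and $\bar g_n=G_n/n\in[0,1]$, and since normalization forces $\theta_m\to 0$,
\[
\sum_{m\ge1}\theta_m g_m=\sum_{n\ge1}(\theta_n-\theta_{n+1})\,G_n=\sum_{n\ge1}n(\theta_n-\theta_{n+1})\,\bar g_n.
\]
The contribution of the first $N$ stages is bounded by $\sum_{n\le N}n\,|\theta_n-\theta_{n+1}|\le N\sum_{n\le N}|\theta_n-\theta_{n+1}|$, whose expectation is at most $N\,I(\theta,x_1)$ and therefore negligible once $I(\theta,x_1)\le\alpha:=\varepsilon/N$. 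For the tail $n>N$, the coefficients $n(\theta_n-\theta_{n+1})$ have random signs and are correlated with the realized averages $\bar g_n$; this is the delicate point of the history-dependent setting, and the cancellation between positive and negative increments cannot be discarded. I would control it by combining the uniform bound $v_n\le v^*+\varepsilon$ with the supermartingale property of $(v^*(x_m))_m$ and the uniform Lipschitz regularity of the $v_n$, so that, after taking expectations, the tail contributes at most $v^*(x_1)+O(\varepsilon)$. Together this gives $v_\theta(x_1)\le v^*(x_1)+O(\varepsilon)$ for all $\theta\in\Theta_{\mathcal N}$ with $I(\theta,x_1)\le\alpha$.

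For the lower bound I would produce a single pure strategy $\sigma^*$ that is uniformly $\varepsilon$-optimal, i.e. guarantees at least $v^*(x_1)-\varepsilon$ in every sufficiently long $n$-stage problem, and then show that the very same summation-by-parts estimate, run in the favorable direction, transfers this guarantee to every patient evaluation. To make $\sigma^*$ finite-memory I would discretize $X$ into finitely many cells of small $d_{KR}$-diameter, fix for each cell-representative a near-optimal \emph{finite-horizon} strategy (which, being a depth-bounded decision tree, has finite memory), and play in long blocks; the automaton state records the current cell, an intra-block clock, and the memory of the inner strategy, and the uniform Lipschitz continuity of the values absorbs the gap between the true belief and its representative. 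Purity and finiteness are then built in by construction, matching the transducer description of the statement.

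The main obstacle is this finite-memory construction. A myopic restart at each block boundary only secures $\E_{x_1}^{\sigma^*}\big(\liminf_k v^*(x_{t_k})\big)$ over the block starts $t_k$, which may be strictly smaller than $v^*(x_1)$ because a bad strategy---or merely unfavorable signals---can drive the belief into low-value regions; the strategy must instead commit to a long-run plan whose realized Cesàro averages stay above $v^*(x_1)-\varepsilon$. Moreover, since the Bayesian belief update is not contracting in general, one must prevent the discretization error carried by the finite automaton from accumulating over the infinitely many blocks, while ensuring a single automaton works simultaneously for \emph{all} evaluations of small irregularity rather than for one fixed horizon. Reconciling genuine finite memory with this uniform, long-run guarantee---together with the correlation issue in the upper bound---is where the structural POMDP results (uniform convergence and convexity of $v_n$, and existence of the uniform value) must be orchestrated most carefully.
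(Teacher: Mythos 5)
There is a genuine gap on both sides of the argument, and in each case it is exactly at the point you flag as ``delicate'' and then pass over. For the upper bound, after summation by parts the tail term $\sum_{n>N} n(\theta_n-\theta_{n+1})\bar g_n$ is the whole difficulty: $\theta_n-\theta_{n+1}$ is $\mathcal{F}^o_{n+1}$-measurable and genuinely correlated with $\bar g_n$, and the irregularity only controls $\E\sum_n|\theta_n-\theta_{n+1}|$, not $\E\sum_n n|\theta_n-\theta_{n+1}|$, so any estimate that takes absolute values in the tail diverges, while no mechanism is offered for exploiting the cancellation. Saying you ``would control it by combining'' the uniform convergence of $v_n$, the supermartingale property of $(v^*(x_m))_m$ and Lipschitz regularity is not an argument, and it is doubtful one exists along this route: the paper instead builds occupation measures $\mu^l\in\Delta(X)$ (the projected image of $\P_{x_1}^{\sigma^l}$ under $h_\infty\mapsto\sum_m\theta^l_m\delta_{x_m}$), extracts a weak$^*$ limit $\mu^*$, devotes an entire section to proving that $\mu^*$ is an \emph{invariant measure} --- this is precisely where the $\mathcal{F}^o_m$-measurability of $\theta_m$ enters, through the approximating measures $\theta^l_f,\theta'^l_f$ on observed histories and a disintegration kernel --- and only then concludes via $\hat g(\mu^*)\le\widehat{v^*}(\mu^*)\le v^*(x_1)$, the last inequality using a randomized stopping time and the optional stopping theorem (the supermartingale property applied at a random time, not stagewise). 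Propositions \ref{ex1} and \ref{ex2} of the paper show the conclusion is false when the measurability hypothesis is relaxed, so any correct proof must use that hypothesis structurally; your sketch never does.

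For the lower bound, you list the obstacles to a discretize-and-restart automaton (block restarts only secure $\E(\liminf_k v^*(x_{t_k}))$, Bayes updates are not contracting, discretization errors accumulate, and the guarantee must be uniform over all patient $\theta$) but resolve none of them --- and these are exactly the reasons such a direct construction is not available. The paper does not construct the automaton at all: it invokes the known result (\cite{CSZ19}) that the $\liminf$-evaluation problem admits pure $\varepsilon$-optimal strategies with \emph{finite memory}, observes that under such a strategy the process (state, memory state, action, signal) is a finite Markov chain, and then proves a self-contained Markov-chain lemma (Lemma \ref{titi}): for any normalized history-dependent evaluation $\theta$, the $\theta$-weighted payoff is at least the $\liminf$ payoff minus $4l\,I(\theta)-\varepsilon$, obtained by approximating $\theta$ by an evaluation constant on blocks of length $l$ (error at most $2l\,I(\theta)$, Lemma \ref{ecart}) and comparing block averages to the ergodic-class limits $\gamma_d$. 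That lemma is what simultaneously delivers the finite-memory claim, the purity, and the uniformity in $\theta$; without it, or a substitute for it, your lower bound remains a program rather than a proof.
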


Let us emphasize several classes of evaluations that are covered by this result. First, we recover two results that were already known.
\begin{itemize}
\item 
$\lambda$-discounted evaluations and $n$-stage evaluations: when restricted to these evaluations, the weighted value definition coincides with the notion of uniform value. Existence of uniform value has been 
proven by Rosenberg, Solan and Vieille \cite{RSV02} (see also \cite{R11} and \cite{VZ16}). 
\vspace{-0.2cm}
\item
Deterministic evaluations: the weighted value coincides with the notion of general uniform value, which existence has been proven in \cite{RV17}.
\end{itemize}
Moreover, we can study new classes of evaluations:
\begin{itemize}
\item Decreasing evaluations: this corresponds to the case where $(\theta_m)$ is almost surely decreasing, and $\theta_1$ vanishes. Note that the usual ``trick" to 
express decreasing evaluations as a convex combination of average payoffs would not work here, due to the fact that the weights depend on the history. 
\vspace{-0.2cm}
\item
$N$-piecewise constant evaluations: this corresponds to the case where there exists some constant $N$ such that almost surely, $(\theta_m)$ takes at most $N$ different values, and $\sup_{m \geq 1} \m{E}(\theta_m)$ vanishes. 
\end{itemize}


We now give two examples that show that the measurability assumption of $\theta_m$ with respect to $\mathcal{F}^0_m$ is essential. 
\begin{proposition} \label{ex1}
There exists a POMDP $\Gamma$,  an initial belief distribution $x_1$ and a sequence of evaluations $(\theta^l)$ such that for all $l\geq 1$ and $m\geq 1$, $\theta^l_m$ is measurable with respect to $\mathcal{F}_m$ (but not to $\mathcal{F}^0_m)$, the impatience of $(\theta^l)$ vanishes, and 
\begin{equation*}
v^*(x_1)<\lim_{l \rightarrow +\infty} v_{\theta^l}(x_1).
\end{equation*}
\end{proposition}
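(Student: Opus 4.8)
The plan is to build an explicit counterexample, taking the simplest POMDP in which the decision-maker receives no information about the state but is rewarded for matching an action to it. Concretely, let $K=\{a,b\}$, $I=\{A,B\}$ and $S=\{*\}$ a single (hence uninformative) signal. I would make both states absorbing with constant signal, i.e. $q(a,i)=\delta_{(a,*)}$ and $q(b,i)=\delta_{(b,*)}$ for every $i\in I$, and set the matching payoff $r(a,A)=r(b,B)=1$, $r(a,B)=r(b,A)=0$. Take $x_1=\tfrac12 \delta_a+\tfrac12\delta_b$. Since the signal never separates the two states, the observed history carries no information about $k_1$, so in any $n$-stage problem every strategy earns expected average payoff exactly $\tfrac12$; hence $v_n(x_1)=\tfrac12$ and $v^*(x_1)=\tfrac12$. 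This first computation is routine but should be stated cleanly, as it fixes the baseline to beat.

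Next I would define the evaluations, exploiting that the weights are allowed to read the realized (absorbing) state $k_m=k_1$. For $l\ge 1$ set
\[
\theta^l_m(h_\infty)=\tfrac1l\,\mathbf{1}_{\{k_1=a\}}\mathbf{1}_{\{1\le m\le l\}}+\tfrac1l\,\mathbf{1}_{\{k_1=b\}}\mathbf{1}_{\{l< m\le 2l\}}.
\]
Each $\theta^l_m$ is $\mathcal{F}_m$-measurable, since it reads only the current state $k_m$ (equal to $k_1$) together with the deterministic stage index; but it is not $\mathcal{F}^o_m$-measurable, because it discriminates between $a$ and $b$, which the observed history cannot. It is normalized: on each play the weights sum to $1$. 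I would then pair $\theta^l$ with the pure (finite-memory) strategy $\sigma$ that plays $A$ on stages $1,\dots,l$ and $B$ on stages $l+1,\dots,2l$ (arbitrary afterwards).

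The crux of the construction is that the weight concentrates exactly on the stages where $\sigma$ matches the hidden state: in state $a$ the payoff equals $1$ on $\{1,\dots,l\}$, where the mass lives, and in state $b$ it equals $1$ on $\{l+1,\dots,2l\}$, where the mass lives. Hence $\gamma_{\theta^l}(x_1,\sigma)=\tfrac12\cdot 1+\tfrac12\cdot 1=1$, and since the weights sum to $1$ and payoffs lie in $[0,1]$ we obtain $v_{\theta^l}(x_1)=1>\tfrac12=v^*(x_1)$ for every $l$. It remains to verify that the irregularity vanishes. Because $\theta^l_m$ depends only on $k_1$ and $m$, the quantity $|\theta_1|+\sum_{m}|\theta_m-\theta_{m+1}|$ is a deterministic function of $k_1$, equal to $2/l$ in each state (at most two jumps of size $1/l$, plus the $|\theta_1|$ term). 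Consequently $I(\theta^l,x_1,\sigma)$ does not depend on $\sigma$, so $I(\theta^l,x_1)=\sup_\sigma I(\theta^l,x_1,\sigma)=2/l\to 0$.

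The one delicate point, on which I would spend care, is reconciling the two requirements: to beat $v^*$ the weights must be correlated with the hidden state, yet to keep the irregularity small they must vary slowly in $m$. Placing the mass in a single block of length $l$ rather than stage-by-stage is precisely what achieves this, producing only $O(1)$ jumps of size $1/l$ instead of $O(l)$ of them; a per-stage alternating weight would be equally state-aware but would have irregularity bounded away from $0$. A secondary check worth making explicit is that the irregularity is genuinely $\sigma$-independent here, so that no subtlety is hidden in the supremum over $\sigma$ in its definition.
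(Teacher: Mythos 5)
Your proposal is correct and follows essentially the same route as the paper: the same blind two-state matching POMDP with an absorbing state, the same state-dependent block evaluations $\theta^l$ concentrating mass $1/l$ on stages $1,\dots,l$ or $l+1,\dots,2l$ according to $k_1$, the same block strategy achieving payoff $1$, and the same irregularity bound $2/l$. Your write-up is in fact slightly more detailed than the paper's (explicit measurability and irregularity checks), but the construction and argument are identical.
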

\begin{proof}
We consider the following example with two states $K=\{\alpha,\beta\}$, two actions $I=\{\alpha,\beta\}$ and one signal $S=\{s_0\}$. The payoff is $1$ if state and action match, and 0 otherwise. Moreover, the state never changes. \\
Let $x_1:=\frac{1}{2} \cdot \delta_{\alpha}+\frac{1}{2} \cdot \delta_{\beta}$. For any $\theta \in \Theta_{\mathcal{N}}$, any strategy in $\Gamma_{\theta}(x_1)$ gives payoff $1/2$, thus \\
$v_{\theta}(x_1)=1/2$. 
\\
Now consider the following evaluation $\theta^l$ ($l \geq 1$) which depends only on the state variable at the initial stage:
\begin{itemize}
\item if $k_1=\alpha$ then the weight is concentrated on the first $l$ stages: for every $1\leq m \leq l$, $\theta^l_m(\alpha,...)=\frac{1}{l}$ and for every $m\geq l+1$, $\theta^l_m(\alpha,...)=0$.
\item if $k_1=\beta$ then the weight is concentrated between stage $l+1$ and stage $2l$: for every $l+1\leq m\leq 2l$, $\theta^l_m=\frac{1}{l}$ and for every $m\leq l$ or $m\geq 2l+1$, $\theta^l_m=0$.
\end{itemize}
Consider the strategy that plays action $\alpha$ during $l$ stages and then $\beta$ forever. This strategy yields the maximal payoff $1$ in $\Gamma_{\theta^l}(x_1)$, hence $v_{\theta^l}(x_1)=1$. Since $I(\theta^l_m,x_1)=2/l$, the result follows. 
\end{proof}

%
\begin{proposition} \label{ex2}
There exists a POMDP $\Gamma$,  an initial belief distribution $x_1$ and a sequence of evaluations $(\theta^l_m)$ such that for all $l\geq 1$ and $m\geq 1$, $\theta^l_m$is measurable with respect to $\mathcal{F}^0$ (but not with respect to $\mathcal{F}^0_m$), the impatience of $(\theta^l)$ vanishes, and
\begin{equation*}
v^*(x_1)<\lim_{l \rightarrow +\infty} v_{\theta^l}(x_1).
\end{equation*}
\end{proposition}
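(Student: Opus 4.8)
The plan is to build on the structure of Proposition~\ref{ex1}, keeping the same underlying POMDP but moving the source of information about the hidden state from the initial state variable $k_1$ (which is $\mathcal{F}_1$-measurable but not observable) into the \emph{long-run} signal/payoff stream, so that the resulting weights become $\mathcal{F}^o$-measurable yet fail to be $\mathcal{F}^o_m$-measurable for any finite $m$. The obstacle in Proposition~\ref{ex1} was that the decision-maker could not detect $k_1$; here the decision-maker eventually learns the relevant state asymptotically through signals, but the weight $\theta^l_m$ at stage $m$ is allowed to depend on the whole observed play rather than only on the history up to stage $m$, and this ``peeking into the future'' is exactly what lets the evaluation concentrate on the stages where the correct action is played.

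Concretely, I would take the two-state, two-action POMDP $K=\{\alpha,\beta\}$, $I=\{\alpha,\beta\}$, with payoff $1$ iff action matches state, but now choose a transition function so that the state is \emph{fixed} and a single informative signal is emitted, say, only at a random late stage, or more robustly a signal structure under which the observed play $h_\infty$ almost surely determines $k_1$ in the limit while revealing nothing at any fixed finite horizon. For instance, let the state never change and let the signal at each stage be $s_0$ with probability $1-\eta$ and an informative signal $s_\alpha$ or $s_\beta$ (revealing the state) with small probability $\eta$; then $k_1$ is not $\mathcal{F}^o_m$-measurable for any $m$, but is $\mathcal{F}^o$-measurable up to a null set. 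Starting from $x_1=\tfrac12\delta_\alpha+\tfrac12\delta_\beta$, the asymptotic value is again $v^*(x_1)=1/2$, since no strategy can do better than guessing.

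I would then define $\theta^l$ by the same recipe as in Proposition~\ref{ex1}, but reading off the state from the \emph{observed} play instead of from $k_1$: on the event $\{k_1=\alpha\}$ put $\theta^l_m=\tfrac1l$ for $1\le m\le l$ and $0$ afterwards, and on $\{k_1=\beta\}$ put $\theta^l_m=\tfrac1l$ for $l+1\le m\le 2l$ and $0$ elsewhere. Because $k_1$ is determined by $h_\infty\in H^o_\infty$ almost surely, each $\theta^l_m$ is $\mathcal{F}^o$-measurable, and the normalization $\sum_m\theta^l_m=1$ and the irregularity bound $I(\theta^l,x_1)=2/l\to 0$ carry over verbatim from the previous computation. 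The strategy ``play $\alpha$ for $l$ stages, then $\beta$ forever'' again scores $1$ against $\theta^l$, giving $v_{\theta^l}(x_1)=1$ and hence $\lim_{l\to+\infty}v_{\theta^l}(x_1)=1>1/2=v^*(x_1)$.

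\emph{The main obstacle} is verifying that the chosen signal structure makes $\theta^l_m$ genuinely $\mathcal{F}^o$-measurable (i.e.\ that $\{k_1=\alpha\}$ coincides, up to a $\m{P}^\sigma_{x_1}$-null set, with an $\mathcal{F}^o$-measurable event) while simultaneously keeping it non-$\mathcal{F}^o_m$-measurable and \emph{not} allowing the decision-maker to exploit the signals to raise $v^*$ above $1/2$; if the informative signal arrives too early with respect to the window $[1,l]$, the decision-maker could adapt within the evaluation horizon and the separation between $v_{\theta^l}$ and $v^*$ might collapse. I expect this to require either taking the signal probability $\eta=\eta(l)\to 0$ slowly enough to guarantee eventual identification but fast decay at finite horizons, or decoupling ``the measurability of $\theta^l$'' (which uses the full play) from ``the information available to the strategy at stage $m$'' (which is only $\mathcal{F}^o_m$); a clean way to do the latter is to let $\theta^l_m$ depend on the state through a tail event of the observed play, so that measurability holds on $\mathcal{F}^o$ but no finite prefix reveals it, exactly mirroring the role played by $\mathcal{F}_m$ versus $\mathcal{F}^o_m$ in Proposition~\ref{ex1}.
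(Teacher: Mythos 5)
Your construction has a genuine gap, and it is exactly the obstacle you flag at the end --- but it is fatal rather than technical. In your POMDP the payoff comes from \emph{matching} the hidden state, so any signal structure that makes $\{k_1=\alpha\}$ coincide with an $\mathcal{F}^o$-event up to a null set necessarily reveals the state to the decision-maker at an almost-surely finite time, after which he can match it forever. With a revealing signal of any fixed per-stage probability $\eta>0$, the waiting time for revelation has expectation $1/\eta$, so $v_n(x_1)\geq 1-\frac{1}{\eta n}$, hence $v^*(x_1)=1$, and the claimed strict inequality fails because $\lim_l v_{\theta^l}(x_1)\leq 1$. Neither of your proposed repairs closes this. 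Taking $\eta=\eta(l)\to 0$ changes the POMDP with $l$, whereas the proposition requires a single fixed $\Gamma$ for the whole sequence of evaluations. And the ``tail event'' decoupling is impossible in the blind POMDP of Proposition \ref{ex1}: there the signal is constant, so under every strategy $k_1$ is independent of the entire observed play, and for every $A\in\mathcal{F}^o$ one computes $\P_{x_1}^{\sigma}\left(\{k_1=\alpha\}\,\triangle\, A\right)=\tfrac{1}{2}$ exactly; the state event can never agree with an observed-play event up to a null set. So within your paradigm (persistent hidden state, payoff steerable by knowledge of that state) the two requirements --- $\mathcal{F}^o$-measurability of the weights and $v^*(x_1)=\tfrac12$ --- are contradictory, not merely in tension.

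The paper escapes this by abandoning the strategic matching structure altogether. It takes states drawn i.i.d.\ uniformly on $\{\alpha,\beta\}$ at every stage and publicly observed, with payoff $1$ in $\alpha$ and $0$ in $\beta$ \emph{independently of the action}. Observation is then worthless to the player: the payoff is uncontrolled and each new state is a fresh coin flip, so $v^*(x_1)=\tfrac12$. Yet the evaluation may peek into the future of the observed play: set $\theta^l_m=1/l$ on the window $N+1\leq m\leq N+l$, where $N$ is the first stage from which the state equals $\alpha$ for $l$ consecutive stages (almost surely finite, by Borel--Cantelli), and $\theta^l_m=0$ otherwise. This gives $\gamma_{\theta^l}(x_1,\sigma)=1$ for every strategy, so $v_{\theta^l}(x_1)=1$, with impatience $2/l\to 0$. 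The moral, which your proposal works against, is that the counterexample must make future information useless to the \emph{player} while still useful to the \emph{evaluation}; keeping a payoff that the player can improve by learning the state makes that impossible.
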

\begin{proof}
Consider the Markov chain on $K=\{\alpha,\beta\}$, where at each stage, the state is drawn uniformly. The payoff is 1 in state $\alpha$, and the payoff is 0 in state $\beta$. Let $x_1:= \frac{1}{2} \cdot \delta_{\alpha}+\frac{1}{2} \cdot \delta_{\beta}$. Thus, 
$v_{\infty}(x_1)=1/2$. Consider the evaluation $\theta^l_m$ such that: 
$\theta^l_m=1/l$ for $N+1 \leq m \leq N+l$, where $N$ is the first stage such that the state is $\alpha$ in all stages between $N+1$ and $N+l$, and $\theta^l_m=0$ otherwise. We have $v_\theta(x_1)=1$ and $I(\theta^l_m,x_1)=2/l$, and thus the result is proved. 
\end{proof}
\subsection{Limsup evaluations}
In addition to the above classes, it turns out that the proof of Theorem \ref{uniform} allows us to deal with another type of evaluations, namely the $\limsup$ (average) evaluations. These are not history-dependent evaluations in the sense of Definition \ref{def:eval}, but they can be approximated by them in some sense. We turn to this point now. 
\\
In a previous paper \cite{VZ16}, we focused on the $\liminf$ evaluation, where informally the decision-maker is pessimistic about the length of the game. Given an initial belief $x_1 \in X$, the \textit{POMDP with $\liminf$ evaluation} $\underline{\underline{\Gamma}}_{\infty}(x_1)$ is the problem with strategy set $\Sigma$, and payoff function $\underline{\underline{\gamma}}_{\infty}$ defined for all $\sigma \in \Sigma$ by
\begin{equation*}
\underline{\underline{\gamma}}_{\infty}(x_1,\sigma):=\E^{\sigma}_{x_1}\left(\liminf_{n \rightarrow +\infty} \frac{1}{n} \sum_{m=1}^n r(k_m,i_m) \right).
\end{equation*}
This type of payoff was introduced by Gillette for stochastic games \cite{gillette_57}. The value of $\underline{\underline{\Gamma}}_{\infty}(x_1)$, called the \textit{$\liminf$ value}, is
\begin{equation}\label{w}
\underline{\underline{v}}_\infty(x_1):=\sup_{\sigma \in \Sigma} \underline{\underline{\gamma}}_{\infty}(x_1,\sigma)=\sup_{\sigma \in \Sigma_p} \underline{\underline{\gamma}}_{\infty}(x_1,\sigma).
\end{equation}
The supremum  can be taken over pure strategies as a direct consequence of Theorem 5.2 in Feinberg \cite{F96}. This value coincides with the asymptotic value $v^*(x_1)$ \cite{VZ16}, and for all $\varepsilon>0$, there exists an $\varepsilon$-optimal pure strategy with finite memory \cite{CSZ19}. \\

 Let us now analyze the opposite situation where the decision-maker is optimistic about the length of the game. This leads to consider the \textit{$\limsup$-evaluation}, where the payoff is defined by
\begin{equation*}
\overline{\overline{\gamma}}_{\infty}(x_1,\sigma)=\E^{\sigma}_{x_1}\left(\limsup_{n \rightarrow +\infty} \frac{1}{n} \sum_{m=1}^n r(k_m,i_m) \right).
\end{equation*}
The associated value is denoted by $\overline{\overline{v}}_{\infty}(x_1)$, and called the $\limsup$ value.
Unfortunately, this evaluation is not a function of the observed history, which makes it outside of the scope of Theorem \ref{uniform}. By properties of finite Markov chains, if the decision-maker restricts himself to finite-memory strategies, then the $\liminf$ evaluation and the $\limsup$ evaluation coincide but it may not be the case if the strategy does not have finite memory. Though for the $\liminf$ evaluation, infinite-memory strategies does not yield a higher payoff than finite-memory strategies, this is not the case for the $\limsup$ evaluation, as shown by the following example.
 
\begin{proposition}\label{strictly}
There exists a POMDP $\Gamma$ and an initial belief distribution $x_1$ such that
\[
v_\infty(x_1) < \overline{\overline{v}}_\infty(x_1).
\]
\end{proposition}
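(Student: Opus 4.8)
The plan is to reuse the guessing POMDP already introduced in the proof of Proposition~\ref{ex1}: two states $K=\{\alpha,\beta\}$, two actions $I=\{\alpha,\beta\}$, a single signal $S=\{s_0\}$, payoff $1$ when the action matches the state and $0$ otherwise, with the state never changing, and initial belief $x_1=\tfrac12\delta_\alpha+\tfrac12\delta_\beta$. First I would record the value on the left-hand side. Since the unique signal $s_0$ is uninformative, the belief stays equal to $x_1$ at every stage, so each stage yields expected payoff exactly $1/2$ under any strategy; hence $v_n(x_1)=1/2$ for all $n$, $v^*(x_1)=1/2$, and by Theorem~\ref{uniform} the weighted value satisfies $v_\infty(x_1)=v^*(x_1)=1/2$.

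Next I would exhibit a single infinite-memory pure strategy $\sigma^*$ whose $\limsup$ payoff is as large as possible. Partition the stages into consecutive blocks $B_1,B_2,\dots$ of lengths $L_1,L_2,\dots$, chosen so fast-growing that the last block dominates all preceding ones, for instance $L_j=2^{2^j}$, which gives $\sum_{i<j}L_i=o(L_j)$. On odd-indexed blocks the strategy plays $\alpha$, on even-indexed blocks it plays $\beta$; this is a legitimate behavior strategy, being a deterministic function of the stage index (equivalently of the observed history, the signals being constant), and it genuinely requires unbounded memory because the block lengths are unbounded. This is exactly why it escapes the finite-memory regime in which the $\liminf$ and $\limsup$ evaluations coincide, and it exploits the fact that payoffs are not observed (were they observed, the state would be learned).

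Then I would compute $\overline{\overline{\gamma}}_\infty(x_1,\sigma^*)$ realization by realization. Fix $k=\alpha$ (the case $k=\beta$ is symmetric after swapping block parities): the stage payoff equals $1$ precisely during odd blocks and $0$ during even blocks. Writing $n_j=\sum_{i=1}^j L_i$ for the last stage of block $j$, for every odd $j$ one has
\[
\frac{1}{n_j}\sum_{m=1}^{n_j} r(k_m,i_m)\;\ge\;\frac{L_j}{L_j+\sum_{i<j}L_i}\;=\;\frac{1}{1+o(1)}\;\xrightarrow[j\to\infty]{}\;1,
\]
so along this subsequence the Cesàro average tends to $1$, giving $\limsup_n \frac1n\sum_{m=1}^n r(k_m,i_m)=1$ almost surely. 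Taking expectation yields $\overline{\overline{\gamma}}_\infty(x_1,\sigma^*)=1$, hence $\overline{\overline{v}}_\infty(x_1)=1>1/2=v_\infty(x_1)$, which is the claim. The only delicate point, and the step I would check most carefully, is the block-length estimate showing that the most recent block dominates the running average so that the $\limsup$ reaches $1$; the rest is bookkeeping on the deterministic block structure.
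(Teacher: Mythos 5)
Your proof is correct and follows essentially the same route as the paper's: a blind two-state POMDP whose weighted value is $1/2$, together with an infinite-memory pure strategy built from super-exponentially growing blocks, so that the realized payoff stream alternates between long blocks of $0$'s and $1$'s and the running average has $\limsup$ equal to $1$ on each of the two possible histories. The only cosmetic difference is that the paper creates the alternation by switching the \emph{state} (its actions are ``stay''/``switch'' with a state-dependent payoff), whereas you switch the \emph{action} against a frozen state in the guessing game of Proposition~\ref{ex1}; the block-domination estimate you flag as the delicate step is precisely the point the paper's argument also rests on.
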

\begin{proof}
We consider the following example with two states $K=\{\alpha,\beta\}$, two actions $I=\{T,B\}$ and only one signal $S=\{s_0\}$ (\textit{blind MDP}). The payoff is $0$ in state $\alpha$ and $1$ in state $\beta$. Moreover, we assume that the transition is given as follows: when the decision-maker plays $T$, the state stays the same, and when he plays $B$, the state switches to the other state. 

Let us consider the initial distribution $(1/2,1/2)$, then this problem admits a weighted value equal to $1/2$, that is guaranteed by any strategy. 

On the contrary, the $\limsup$ value is equal to $1$. The following strategy guarantees $1$ for the $\limsup$ evaluation: 
play $T$ for $2$ stages, play $B$ once, play $T$ for $2^{2^2}$ stages, play $B$ once, play $T$ for $2^{n^2}$ stages, play $B$, and so on and so forth.
\\
Starting from state $\alpha$ (resp. $\beta$), the sequence of states is uniquely determined and payoffs alternate between long blocks of $0$ and long blocks of $1$. Since the block sizes get larger and larger, one can check that the payoff under the $\limsup$ evaluation is equal to $1$ on each of the two infinite histories. Hence, the $limsup$ value is equal to $1$. Notice that the strategy has infinite memory and that its payoff under the $\liminf$ evaluation is equal to $0$.
\end{proof}

There is another way to define such an optimistic evaluation that will lead to a positive result, namely the {\it $\limsup$-belief} evaluation. Given an observed play $h^o_\infty$, we can define $x_m$ as the belief of the decision-maker at stage $m$, conditionally on the observed play: 
\[
x_m=
\begin{cases}
\P_{x_1}^\sigma\left(k_m=.\left|h^o_m\right.\right) & \text { if } \m{P}_{x_1}^{\sigma}(h^0_m)>0, \\
\delta_{k_0} & \text { otherwise},
\end{cases}
\]
where $k_0$ is a fixed arbitrary state. It is a function of $h^o_m$, the observed history until stage $m$, and of the initial belief $x_1$. Moreover, since the decision-maker remembers his actions, this is independent of the strategy used. The {\it $\limsup$-belief} evaluation is then defined by
\begin{equation*}
\overline{\gamma}_{\infty}(x_1,\sigma):=\E^{\sigma}_{x_1}\left(\limsup_{n \rightarrow +\infty} \frac{1}{n} \sum_{m=1}^n g(x_m,i_m) \right),
\end{equation*}
where $g(p,i)=\sum_{k \in K} p(k) g(k,i)$, for all $p \in \Delta(K)$ and $i \in I$. 
Given an initial belief $x_1 \in X$, the \textit{POMDP with $\limsup$-belief evaluation} $\overline{\Gamma}_{\infty}(x_1)$ is the problem with strategy set $\Sigma$, and payoff function $\overline{\gamma}_\infty$. Its value, called the \textit{$\limsup$-belief value}, is denoted by $\overline{v}_\infty(x_1)$.

\begin{theorem}\label{limsup}
Let $\Gamma$ be a finite POMDP. For every $x_1 \in X$, the $\limsup$-belief value $\overline{v}_\infty(x_1)$ and the weighted value $v_{\infty}(x_1)=v^*(x_1)$ coincide.
\end{theorem}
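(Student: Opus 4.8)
The plan is to prove the two inequalities $\overline{v}_\infty(x_1)\ge v^*(x_1)$ and $\overline{v}_\infty(x_1)\le v^*(x_1)$ separately, both resting on the following identity. For any behavior strategy $\sigma$ and any \emph{history-dependent} evaluation $\theta$ (so that $\theta_m$ is $\mathcal{F}^o_m$-measurable), conditioning on $\mathcal{F}^o_m$ and using $\E^\sigma_{x_1}\!\left(r(k_m,i_m)\mid\mathcal{F}^o_m\right)=g(x_m,i_m)$ gives $\E^\sigma_{x_1}(\theta_m\, r(k_m,i_m))=\E^\sigma_{x_1}(\theta_m\, g(x_m,i_m))$, hence $\gamma_\theta(x_1,\sigma)=\E^\sigma_{x_1}\!\left(\sum_{m\ge 1}\theta_m\, g(x_m,i_m)\right)$. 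This reduces everything to the adapted, $[0,1]$-valued belief-payoff process $b_m:=g(x_m,i_m)$, and lets me feed window-type evaluations of $(b_m)$ into Theorem~\ref{uniform}.

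For the lower bound I would take, for $\varepsilon>0$, a finite-memory pure strategy $\sigma^*$ that is $\varepsilon$-optimal for the $\liminf$ value (which equals $v^*$ by \cite{VZ16,CSZ19}). Under $\sigma^*$ the pair (state, memory) is a finite Markov chain, so $\frac1n\sum_{m=1}^n r(k_m,i_m)$ converges almost surely and, by dominated convergence, $\lim_n\E^{\sigma^*}_{x_1}\!\left(\frac1n\sum_{m=1}^n r\right)=\underline{\underline{\gamma}}_\infty(x_1,\sigma^*)\ge v^*(x_1)-\varepsilon$. Since $\E^{\sigma^*}_{x_1}\!\left(\frac1n\sum_{m=1}^n b_m\right)=\E^{\sigma^*}_{x_1}\!\left(\frac1n\sum_{m=1}^n r\right)$ for every $n$, the reverse Fatou lemma yields $\overline{\gamma}_\infty(x_1,\sigma^*)=\E^{\sigma^*}_{x_1}\!\left(\limsup_n\frac1n\sum_{m=1}^n b_m\right)\ge\limsup_n\E^{\sigma^*}_{x_1}\!\left(\frac1n\sum_{m=1}^n b_m\right)\ge v^*(x_1)-\varepsilon$, and letting $\varepsilon\to 0$ gives $\overline{v}_\infty(x_1)\ge v^*(x_1)$.

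The upper bound is the heart of the matter. Fix $\sigma$ and write $L:=\limsup_n\frac1n\sum_{m=1}^n b_m$; I must show $\E^\sigma_{x_1}(L)\le v^*(x_1)+\varepsilon$. The idea is to over-approximate $L$ by \emph{adapted} window-averages. First, an elementary tiling argument shows that for every fixed window length $l$ the moving-average limsup $W_l:=\limsup_n\frac1l\sum_{m=n+1}^{n+l}b_m$ satisfies $W_l\ge L$ pathwise: if beyond some stage every length-$l$ window had average below $L-\delta$, the Cesàro averages would eventually fall below $L-\delta/2$. Thus a high Cesàro-limsup forces, infinitely often, length-$l$ windows of average close to $L$, and these recur with positive density rather than as isolated spikes. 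Using a layer-cake decomposition $\E^\sigma_{x_1}(L)=\int_0^1\m{P}^\sigma_{x_1}(L\ge c)\,dc$ together with, for each level $c$, observed-measurable stopping times selecting such high windows, I would assemble a history-dependent \emph{normalized} evaluation $\theta^{(l)}$ that places its unit mass on selected windows so that $\gamma_{\theta^{(l)}}(x_1,\sigma)=\E^\sigma_{x_1}\!\left(\sum_m\theta^{(l)}_m b_m\right)\ge\E^\sigma_{x_1}(L)-\eta(l)$ with $\eta(l)\to 0$ (note that overshooting toward $W_l\ge L$ is harmless here). Because a single length-$l$ window has pathwise total variation at most $2/l$ wherever it sits, and because the beliefs $x_m$ — hence the triggers — are strategy-independent functions of $h^o_m$, the irregularity $I(\theta^{(l)},x_1)=\sup_{\sigma'}I(\theta^{(l)},x_1,\sigma')$ is $O(1/l)$ uniformly. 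For $l$ large enough that $I(\theta^{(l)},x_1)\le\alpha(\varepsilon,x_1)$, the weighted-value upper bound of Theorem~\ref{uniform} gives $\gamma_{\theta^{(l)}}(x_1,\sigma)\le v_{\theta^{(l)}}(x_1)\le v^*(x_1)+\varepsilon$; combining, letting $l\to\infty$ then $\varepsilon\to 0$, and taking the supremum over $\sigma$ finishes.

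I expect the main obstacle to be exactly this online construction of $\theta^{(l)}$. The defining feature of $L$ is that the ``good'' averaging windows are recognized only by looking into the future, whereas an admissible weight $\theta^{(l)}_m$ may depend on $\mathcal{F}^o_m$ alone; one cannot simply detect a high window and then retroactively weight it, since its average is known only $l$ stages after the window begins. Reconciling these — committing mass before a window's average is known, while keeping the evaluation genuinely normalized and of total variation $O(1/l)$ simultaneously on every path and for every strategy — is the delicate point, and it is precisely why the comparison $W_l\ge L$ matters: a high Cesàro-limsup guarantees a positive recurring density of exploitable high windows (not isolated spikes), so the mass lost to the detection delay is negligible in the long run. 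Turning the level-indexed stopping times into a single normalized evaluation, without ever using knowledge of whether a trigger will eventually fire, is where the careful measurability bookkeeping lies.
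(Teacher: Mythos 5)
Your reduction to the belief process and your lower bound are fine (the paper gets the lower bound even more cheaply: by conditional Fatou, $\underline{\underline{\gamma}}_{\infty}(x_1,\sigma)\leq \overline{\gamma}_{\infty}(x_1,\sigma)$ pointwise in $\sigma$, so $\overline{v}_\infty(x_1)\geq \underline{\underline{v}}_\infty(x_1)=v^*(x_1)$ with no work). The genuine gap is in the upper bound, at exactly the step you yourself flag as ``delicate'': you never construct the adapted, \emph{pathwise-normalized} evaluation $\theta^{(l)}$, and the heuristics offered in its support do not hold. High windows are only guaranteed to recur with positive \emph{upper} density along the limsup subsequence, not with positive lower density (take $b_m=1$ on blocks $[T_k,2T_k]$ with $T_k$ growing doubly exponentially and $b_m=0$ elsewhere: the Ces\`aro limsup is $1/2$, yet the time axis is mostly devoid of high windows), so the ``mass lost to the detection delay'' is not negligible, and an adapted trigger rule cannot in general be guaranteed to place its mass on windows whose average is close to $L$, because whether a window is high is only revealed after it ends. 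The claimed uniform bound $I(\theta^{(l)},x_1)=O(1/l)$ for a multi-level, stopping-time-based construction is likewise unsubstantiated. What forces you into this corner is the insistence on pathwise normalization, which is the hypothesis you need to invoke the upper bound of Theorem \ref{uniform} as a black box.

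The paper resolves precisely this obstruction by giving up pathwise normalization. It fixes $\sigma$ $\varepsilon$-optimal for $\overline{\gamma}_\infty$, defines the future-dependent time $\eta^l=\inf\left\{n'\geq l : \frac{1}{n'}\sum_{m=1}^{n'}r(x_m)\geq \limsup_{N}\frac{1}{N}\sum_{m=1}^{N}r(x_m)-\frac{1}{l}\right\}$ and the non-adapted evaluation $\theta^l_m=\frac{1}{\eta^l}\1_{m\leq\eta^l}$, and then projects onto the observed filtration by conditional expectation, $\rho^l_m:=\E^{\sigma}_{x_1}\left(\theta^l_m \mid \mathcal{F}^o_m\right)$. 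Since $r(x_m)$ is $\mathcal{F}^o_m$-measurable, $\gamma_{\rho^l}(x_1,\sigma)=\gamma_{\theta^l}(x_1,\sigma)\geq \overline{\gamma}_\infty(x_1,\sigma)-\frac{1}{l}$; but $\rho^l$ is only normalized \emph{in expectation} at $(x_1,\sigma)$, so Theorem \ref{uniform} indeed cannot be applied directly. Instead the paper invokes Lemma \ref{limit_lemma_1}, which was deliberately proved under the weaker expectation-normalization hypothesis, and the real technical content becomes Proposition \ref{irregularity}: showing $I(\rho^l,x_1,\sigma)\to 0$, via the facts that $(\rho^l_m)_m$ is a supermartingale bounded by $\min(1/l,1/m)$ and a Cauchy--Schwarz estimate over blocks of growing length. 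None of this (the conditional-expectation projection, the relaxed normalization, the martingale control of the irregularity) has a counterpart in your proposal, so the key inequality $\overline{v}_\infty(x_1)\leq v^*(x_1)$ remains unproved.
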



It is interesting to notice the asymmetry between the $\limsup$-belief evaluation and the $\liminf$ evaluation. For the sake of the discussion, let us introduce the $\liminf$-belief evaluation defined by
\begin{equation*}
\underline{\gamma}_{\infty}(x_1,\sigma)=\E^{\sigma}_{x_1}\left(\liminf_{n \rightarrow +\infty} \frac{1}{n} \sum_{m=1}^n g(x_m,i_m) \right).
\end{equation*}
Its value is denoted by $\underline{v}_{\infty}(x_1)$, and called the \textit{$\liminf$-belief value}. 
By definition of the inferior and superior limit and Fatou's Lemma, we clearly have
\begin{equation*}
\underline{\underline{\gamma}}_{\infty}(x_1,\sigma)\leq \underline{\gamma}_{\infty}(x_1,\sigma) \leq \overline{\gamma}_{\infty}(x_1,\sigma)\leq \overline{\overline{\gamma}}_{\infty}(x_1,\sigma),
\end{equation*}
and the same inequalities hold for the corresponding values:
\begin{equation*}
\underline{\underline{v}}_{\infty}(x_1)\leq \underline{v}_{\infty}(x_1) \leq \overline{v}_{\infty}(x_1)\leq \overline{\overline{v}}_{\infty}(x_1),
\end{equation*}
To summarize, we have obtained the following results. The authors \cite{VZ16} showed that $\underline{\underline{v}}_{\infty}(x_1)$ is equal to $v^*(x_1)$. Proposition \ref{strictly} shows that $\overline{\overline{v}}_\infty(x_1)$ may be strictly greater than $v^*(x_1)$. Finally, in Theorem \ref{limsup}, we prove that $\overline{v}_\infty(x_1)$ is also equal to $v^*(x_1)$. \\



%
%
Nonetheless, under the natural assumption that the decision-maker observes his payoffs, Theorem \ref{limsup} implies that the $\limsup$ value coincides with the asymptotic value:
\begin{definition} \label{Cdef1}  \rm A POMDP has {\it  known payoffs} if the set of states $K$ can be partitioned
in a way such that for all states $k$, $k'$, $k_1$, $k_2$ in $K$, actions $i$, $i'$ in $I$, and signal $s$ in $S$:
\begin{itemize}
\item
if $k_1\sim k_2$ then $r(k_1,i)=r(k_2,i)$  (two states in the same element of the partition induce the same payoff function), 
\item
 if $q(k,i)(k_1,s)>0$ and $q(k',i')(k_2,s)>0$ then $k_1\sim k_2$ (observing the public signal is enough to deduce the element of the partition containing the current state).
 \end{itemize}
 \end{definition}

\begin{corollary} \label{aux_MDP}
Assume that the POMDP $\Gamma$ has known payoffs. Then
\[
v_\infty(x_1) = \overline{\overline{v}}_\infty(x_1).
\]
\end{corollary}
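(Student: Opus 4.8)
The plan is to deduce the Corollary from Theorem \ref{limsup} by showing that, under the known-payoffs assumption, the $\limsup$ evaluation and the $\limsup$-belief evaluation induce the same payoff for \emph{every} strategy, so that their values coincide. Since Theorem \ref{limsup} already gives $\overline{v}_\infty(x_1)=v_\infty(x_1)$, it suffices to establish $\overline{\overline{v}}_\infty(x_1)=\overline{v}_\infty(x_1)$, after which the chain of equalities closes immediately.

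First I would record the key pointwise identity: for $\P_{x_1}^\sigma$-almost every play and every $m\geq 2$, the realized payoff equals the belief payoff, i.e. $r(k_m,i_m)=g(x_m,i_m)$, where $g(x_m,i_m)=\sum_{k\in K} x_m(k)\, r(k,i_m)$ is the belief-average of the stage payoff. To see this, fix $m\geq 2$ and condition on the observed history $h^o_m$, which contains the signal $s_{m-1}$. Because $(k_m,s_{m-1})$ is drawn from $q(k_{m-1},i_{m-1})$, the second clause of Definition \ref{Cdef1} forces every state compatible with $s_{m-1}$ to lie in one fixed cell of the partition; hence the support of $x_m$ is contained in a single cell, and $k_m$ itself belongs to it. By the first clause, $r(\cdot,i_m)$ is constant on that cell, say equal to $c$, so $g(x_m,i_m)=\sum_{k} x_m(k)\, r(k,i_m)=c=r(k_m,i_m)$.

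Next I would pass to the time averages. Subtracting the two averages leaves only the stage-one discrepancy: $\frac{1}{n}\sum_{m=1}^n r(k_m,i_m)-\frac{1}{n}\sum_{m=1}^n g(x_m,i_m)=\frac{1}{n}\left(r(k_1,i_1)-g(x_1,i_1)\right)$, which is bounded by $1/n$ and therefore tends to $0$ almost surely. Consequently the two $\limsup$s agree almost surely, and taking expectations yields $\overline{\overline{\gamma}}_\infty(x_1,\sigma)=\overline{\gamma}_\infty(x_1,\sigma)$ for every $\sigma\in\Sigma$. Taking suprema over $\sigma$ gives $\overline{\overline{v}}_\infty(x_1)=\overline{v}_\infty(x_1)$, and combining with Theorem \ref{limsup} gives the claimed equality with $v_\infty(x_1)$.

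The proof is short because the substance is carried by Theorem \ref{limsup}; the only genuinely new step is the pointwise identity $r(k_m,i_m)=g(x_m,i_m)$. The main point to handle carefully is that this identity requires the support of $x_m$ to sit inside a single partition cell, which uses \emph{both} clauses of Definition \ref{Cdef1} simultaneously --- the signal-to-cell clause to localize the belief and the equal-payoff clause to make the belief-average collapse to the realized value --- together with the harmless role of the $m=1$ term, for which the belief need not be concentrated but which disappears under the Ces\`aro normalization.
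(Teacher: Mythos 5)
Your proof is correct, but it takes a genuinely different route from the paper's. The paper never proves the identity $r(k_m,i_m)=g(x_m,i_m)$ inside $\Gamma$; instead it builds an auxiliary POMDP $\Gamma'$ with state space $K'=K\times g(K\times I)$ and payoff $r'(k,u)=u$, where the second component of the state at stage $m+1$ records the stage-$m$ payoff of $\Gamma$. It then observes that $\Gamma$ and $\Gamma'$ have the same observed histories (hence the same strategies and, strategy by strategy, the same $\limsup$ payoffs), that under the known-payoffs assumption the $\limsup$ and $\limsup$-belief evaluations coincide in $\Gamma'$, and finally applies Theorem \ref{limsup} to $\Gamma'$. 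Your argument instead establishes the coincidence of the two evaluations directly in $\Gamma$, pathwise from stage $2$ onward: the signal clause of Definition \ref{Cdef1} confines the support of $x_m$ to a single partition cell containing $k_m$, the payoff clause collapses the belief average, and the stage-one discrepancy dies under Ces\`aro normalization, so $\overline{\overline{\gamma}}_\infty(x_1,\sigma)=\overline{\gamma}_\infty(x_1,\sigma)$ for every $\sigma$, after which Theorem \ref{limsup} is applied to $\Gamma$ itself. Both proofs turn on the same insight --- known payoffs force the belief to determine the realized payoff --- but yours is more direct and makes that insight explicit, avoiding any auxiliary construction. What the paper's detour buys is twofold: it only invokes Theorem \ref{limsup} for a POMDP whose payoff depends on the state alone, which is exactly the setting in which the theorem is actually proved (Section 3 reduces the general case to this one via the very same construction), whereas you invoke the theorem for the original, possibly action-dependent, payoff function --- legitimate given the theorem's statement, but resting on that reduction; and the construction does double duty as the without-loss-of-generality step used throughout the proofs in Sections 4 and 5.
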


\begin{proof}
Consider the auxiliary POMDP $\Gamma'=(K',I,S,q',r')$, such that $K'=K \times g(K \times I)$, and $r'(k,u)=u$. The transition on (first component, signal) is the same as in $\Gamma$, and
the second component of the state at stage $m+1$ corresponds to the stage payoff at stage $m$ in $\Gamma$.  Formally, for every $(k,u)\in K'$ and every $(i,s)\in I\times S$,
\[
q'((k,u),i)=\sum_{(l,s)\in K\times S} q(k,i)(l,s)\delta_{(k,r(k,i)),s}.
\]
Naturally, the sets of observed histories in both games are equal and therefore the sets of strategies are the same. One can check easily that, for each strategy, the $\limsup$ payoff in both games are equal, hence their $\limsup$ values coincide. 
Moreover, the $\limsup$-belief evaluation and the $\limsup$ evaluation coincide in $\Gamma'$. Applying Theorem \ref{limsup} to $\Gamma'$, we obtain the result.
\end{proof}

\section{Sketch of proof and comparison with literature}
In the remainder of the paper, we will assume that the payoff $r$ is only a function of the state variable. Indeed, as explained in the proof of Corollary \ref{aux_MDP}, given a POMDP $\Gamma$, one can build an auxiliary POMDP where the new state space is a finite subset of $K\times [0,1]$, and the payoff only depends on the state, and is shifted one stage onward. Existence of $\limsup$ value is equivalent in both POMDPs, and the same is true for the weighted value. \\
\vspace{-0.3cm}
\subsection{Theorem \ref{uniform}}
Our proof borrows several ingredients from the three papers \cite{VZ16,RV17,CSZ19}. In the sketch of proof below, we emphasize the differences and common points with these works. 
To prove Theorem \ref{uniform}, we need to prove first that the decision-maker can guarantee $v^*(x_1)$, for any $\theta$ such that $I(\theta,x_1)$ is small enough (\textit{lower bound}), and that he can not do better (\textit{upper bound}). 
\vspace{-0.3cm}
\paragraph{Lower bound}
Let us prove that the decision-maker can guarantee $v^*(x_1)$ in $\Gamma_\theta(x_1)$, 
for any $\theta$ such that $I(\theta,x_1)$ is close to 0. We rely on the existence of $\varepsilon$-optimal strategies with finite memory for the $\liminf$-evaluation, proved in \cite{CSZ19}. A crucial point is that under a finite memory strategy $\sigma$, the process $(state, memory \ state, action, signal)$ is a finite Markov chain. This enables to express both the $\liminf$ evaluation payoff and the history-dependent evaluation payoff in terms of the ergodic structure of the Markov chain, from which the result follows. 
Establishing the first expression is straightforward, while the second one is more involved. First, using the definition of impatience, we prove that it is enough to consider a restricted class of $\theta$, that are constant by blocks. We can then relate the history-dependent payoff to a combination of terms of the form:
\begin{equation*}
\E^{\sigma}_{x_1} \left. \left(  \frac{1}{l}\sum_{m=tl+1}^{(t+1)l} r_m \right| \mathcal{F}_{tl+1} \right),
\end{equation*}
where $l$ is a fixed (large) integer, and $t$ is large. Using the underlying Markov chain structure again, we can bound from below the history-dependent payoff in terms of the ergodic structure of the Markov chain.
\vspace{-0.3cm}
\paragraph{Upper bound}
In a second part, we consider $\theta^l$ such that $I(\theta^l,x_1)$ tends to 0, and prove that $\limsup_{l \rightarrow +\infty} v_{\theta^l} \leq v_{\infty}(x_1)$. 
A sequence of actions and a sequence of signals induce a sequence of beliefs of the decision-maker over the state variable. Such a sequence of beliefs $(x_1,...,x_t,...)$ together with a sequence of weights $(\theta_1,...,\theta_t,...)$ can be aggregated into a probability distribution over beliefs such that, informally, for every $t\geq 1$, $x_t$ has measure $\theta_t$. An initial belief $x_1$ and a strategy $\sigma$ generate a probability distribution on plays, hence a distribution over distributions over beliefs, by the previous construction. By considering the barycenter of this distribution over distributions over beliefs, we obtain a probability distribution over beliefs $\mu(x_1,\sigma,\theta^l)$, called the occupation measure. We consider an accumulation point $\mu^*$ of $(\mu(x_1,\sigma,\theta^l))$, and prove that it is an \textit{invariant measure}. Intuitively, an invariant measure can be interpreted as follows: there exists $\sigma^*: \Delta(K) \rightarrow \Delta(I)$ such that when the initial belief $p$ is drawn according to $\mu^*$, and the decision-maker plays action $\sigma^*(p)$, then the belief at stage 2 is distributed according to $\mu^*$. 
For deterministic evaluations, the fact that the accumulation point is an invariant measure was proved in \cite{RV17}, and is rather straightforward. In our random framework, the proof is much more intricate, and we need to dedicate a whole section to it.

The end of the proof builds on three inequalities. First, by definition of $\mu(x_1,\sigma,\theta^l)$ and some regularity properties, we have
\begin{equation*}
\limsup_{l \rightarrow +\infty} v_{\theta^l}(x_1)=\int_{\Delta(K)} g(x) \mu^*(dx).
\end{equation*}
Moreover,  we have
\begin{equation*}
\int_{\Delta(K)} g(x)\mu^*(dx) \leq \int_{\Delta(K)} v^*(x)\mu^*(dx). 
\end{equation*}
This inequality was already used in \cite{RV17}. Last, we have
\begin{equation*}
\int_{\Delta(K)} v^*(x) \mu^*(dx) \leq v^*(x_1).
\end{equation*}
The above inequality is more challenging. In the deterministic case, this corresponds to the classical decreasing property of values along play trajectories, that follows from a standard dynamic programming principle argument. In the random case, one has to use a martingale argument and the optional sampling theorem. Combining the three inequalities yields the desired result.

\subsection{Theorem \ref{limsup}}
A $\limsup$-belief evaluation is not a weighted evaluation but it is possible to approximate the $\limsup$-belief evaluation by a weighted evaluation that depends on the observed play: for every observed play, consider $N$ such that $\frac{1}{N} \sum_{m=1}^N r(x_m)$ is close to $\limsup_{n \rightarrow +\infty} \frac{1}{n} \sum_{m=1}^n r(x_m)$ and define the weights to be equal to $1/N \cdot 1_{m \leq N}$.

The first difficulty is that $N$ is not measurable with respect to the past history, in general. Thus, this yields an evaluation $\theta$ such that $\theta_m$ is not measurable with respect to $\mathcal{F}^0_m$. As enlightened by Proposition \ref{ex2}, this assumption is crucial for Theorem \ref{uniform} to hold. 
The trick is to consider an $\varepsilon$-optimal strategy $\sigma^*$ for the $\liminf$ evaluation, and to define the conditional distribution 
\[
\rho^l_m=\E^{x_1}_{\sigma^*}\left( \theta^l_m |\mathcal{F}^o_m\right).
\]
This restores the measurability assumption but introduces a second difficulty: this evaluation is not normalized at $(x_1,\sigma^*)$, but only normalized in expectation. This property is not enough to apply Theorem \ref{uniform} directly. Fortunately, part of its proof can still be used. Indeed, the proof of the upper bound of Theorem \ref{uniform} only requires $\theta$ to be normalized in expectation (the normalized assumption is required for the lower bound). 
Thus, if one proves that the impatience of $\rho^l$ tends to 0, then we deduce directly that
\begin{equation*}
\overline{v}_\infty(x_1) \leq v^*(x_1)=\underline{\underline{v}}_\infty(x_1).
\end{equation*}
The converse inequality being trivial, this implies the result. Finally, we show that the impatience of $\rho^l$ tends to 0. This requires precise martingale inequalities and is the main difficulty of the proof.
\section{Proof of Theorem \ref{uniform}}
This section is decomposed into four steps. 
First, we introduce several notations that will be used in the remainder of the section. In particular, we reformulate the payoff as a function of the beliefs and introduce the notion of invariant measure that is the key element of our proof. Second, relying on the existence of a finite-memory strategy that is $\varepsilon$-optimal for the $\liminf$ evaluation \cite{CSZ19}, we prove that the decision-maker can guarantee $v^*(x_1)-\varepsilon$ in a uniform sense in the POMDP. In order to do so, we will use that $v^*(x_1)=\underline{\underline{v}}_\infty(x_1)$. In the third section, assuming that some measure $\mu^*$ is an invariant measure of the POMDP, we prove that $v^*(x_1)$ is the maximal payoff that the decision-maker can guarantee. The fourth section is dedicated to the proof that the measure $\mu^*$ is indeed an invariant measure. 

Recall that in the rest of the paper, the payoff function is assumed to be action-independent, and this is without loss of generality. 
\subsection{Preliminaries}
Given an evaluation $\theta \in \Theta$, $x_1 \in \Delta(K)$ and $\sigma$, we can reformulate the payoff in terms of the belief of the decision-maker. Recall that given an observed play $h^o_\infty$, $x_m$ is defined as the belief of the decision-maker at stage $m$ conditional to the observed history until the current stage. We will forget the dependence to $x_1$ which will be fixed and therefore, we obtain for every $m\geq 1$ that $x_m$ is a function from $H^o_\infty$ to $\Delta(X)$. Denote by $g$ the linear extension\footnote{We choose not to use the notation $\hat{r}$ since $r$ will not play any role in the rest of the paper.} of $r$ to $X$:
\[
\forall \ x\in X,  \qquad g(x)=\sum_{k \in K} x(k)r(k).
\]
The function $g$ is Lipschitz on $X$, hence continuous, and
\begin{align}\label{transformation}
\E^{\sigma}_{x_1} \left( \sum_{m=1}^{+\infty} \theta_m  r(k_m) \right)&=\E^{\sigma}_{x_1} \left( \sum_{m=1}^{+\infty} \theta_m  g(x_m) \right).
\end{align}

Let $A$ and $B$ be two compact metric spaces, equipped with their Borelian $\sigma$-field. We now define the notion of projected image. Let $\psi: A \rightarrow B$ be a measurable function and $\nu \in \Delta(A)$, then we recall that \emph{the image $\mu$  of $\nu$ by $\psi$} is the unique probability measure such that for every measurable mapping $f:B \rightarrow[-1,1]$,
\begin{align*}
\int_{b\in B} f(b)\mu(db)=\int_{a\in A} f(\psi(a))\nu(da).
\end{align*}

In our proofs, we will need a slightly different result that combines the image of a measure and the barycenter. First, we need the following definition (see \cite[Chapter 11, section 1.8]{DM11}):
\begin{definition}
Let $\nu \in \Delta(\Delta(B))$. The \emph{barycenter} of $\nu$ is the unique probability measure $\mu=\bary(\nu) \in \Delta(B)$ such that for all $f \in \mathcal{C}(B,[-1,1])$,
\[
\hat{f}(\mu)=\int_{\Delta(B)} \hat{f}(z) \nu(dz).
\]

\end{definition}


We can compose the two previous notions to obtain the following one.
\begin{definition}
Let $\phi: A \rightarrow \Delta(B)$ measurable and $\nu \in \Delta(A)$, then we define \emph{the projected image $\mu$ of $\nu$ by $\phi$} as the barycenter of the image of $\nu$ by $\phi$. This is the unique measure on $B$ such that for every measurable mapping $f:B \rightarrow[-1,1]$,
\begin{align*}
\int_{b\in B} f(b)\mu(db)=\int_{a\in A} \left(\int_{b\in B} f(b)\phi(a)(db) \right) \nu(da). 
\end{align*}
\end{definition}
The characterization is straightforward by combining the characterization of the image of a measure and the characterization of the barycenter.

A measurable mapping $\sigma:X \rightarrow \Delta(I)$ can be interpreted as a \textit{stationary strategy}, that is, a strategy that plays after every history according to the current belief only. 
We now define the notion of image of a measure over $\Delta(X)$ by a stationary strategy and the notion of invariant measure. Recall that $X=\Delta(K)$.

\begin{definition}\label{image}
Let $\mu \in \Delta(X)$ be a probability distribution over beliefs and $\sigma:X\rightarrow\Delta(I)$ be a stationary strategy, we define $\nu$ \emph{the image of $\mu$ by the strategy $\sigma$} as follows:
\begin{itemize}
\item define $\sigma^{\sharp q}: X \rightarrow \Delta(X)$ by
\[
\sigma^{\sharp q}(x)= \sum_{(k,i,s)\in K\times I\times S} x(k)\sigma(x)(i)q(k,i)(s)\cdot \delta_{\overline{q}(x,i,s)},
\]
where $q(k,i)(s)=\sum_{k' \in K} q(k,i)(k',s)$, $\overline{q}(x,i,s)=(\overline{q}(x,i,s)(k))_{k\in K}$ with $\overline{q}(x,i,s)(k)=\frac{q(x,i)(k,s)}{\sum_{k'\in K} q(x,i)(k',s)}$ and $q(x,i)(k,s)=\sum_{k'\in K} x(k')q(k',i)(k,s)$,
\item define $\nu$ as being the projected image of $\mu$ by $\sigma^{\sharp q}$.
\end{itemize}
\end{definition}

\begin{definition}
A measure $\mu \in \Delta(X)$ is an \emph{invariant measure} of $\Gamma$ if there exists $\sigma$ a stationary strategy such that the image of $\mu$ by the strategy $\sigma$ is $\mu$. 
\end{definition}

\subsection{The decision-maker guarantees at least $v^*(x_1)$}

In this section, we will show that for any normalized history-dependent evaluation $\theta$, the decision-maker guarantees the value $v^*(x_1)$ up to an error term that vanishes when the irregularity tends to 0. To this aim, it is enough to prove the following proposition:
\begin{proposition}\label{limit_corollary_0}
Let $\Gamma$ be a POMDP and $x_1\in X$. 
For every $\varepsilon>0$, there exists a strategy $\sigma$ and $l \geq 1$ such that for every history-dependent normalized evaluation $\theta\in \Theta_{\mathcal{N}}$,
\begin{align}\label{control}
\E_{x_1}^{\sigma} \left( \sum_{m=1}^{+\infty} \theta_m r(k_m) \right) & \geq v^*(x_1)-4l I(\theta,x_1,\sigma)-\varepsilon.
\end{align}
\end{proposition}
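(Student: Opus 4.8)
The plan is to reduce the general history-dependent evaluation to a block-constant one, express both the target $\liminf$ payoff and the evaluation payoff through the ergodic structure of a finite Markov chain generated by a finite-memory strategy, and compare them block by block.

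First I would invoke the result of \cite{CSZ19}: for the fixed $\varepsilon>0$, there exists a pure finite-memory strategy $\sigma=(\sigma_u,\sigma_a,M,m_0)$ that is $\varepsilon/2$-optimal for the $\liminf$ evaluation, so that $\underline{\underline{\gamma}}_\infty(x_1,\sigma)\geq \underline{\underline{v}}_\infty(x_1)-\varepsilon/2 = v^*(x_1)-\varepsilon/2$, using $v^*(x_1)=\underline{\underline{v}}_\infty(x_1)$ from \cite{VZ16}. The key structural fact is that under such a $\sigma$ the process $(k_m,\text{memory state},i_m,s_m)$ is a finite, time-homogeneous Markov chain; its state space decomposes into recurrent communicating classes, and on each such class the Cesàro averages $\frac{1}{n}\sum_{m=1}^n r(k_m)$ converge $\Pr^\sigma_{x_1}$-almost surely to the ergodic average of $r$ over that class. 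Writing $R^\infty := \liminf_n \frac1n\sum_{m=1}^n r(k_m)$, this gives $\E^\sigma_{x_1}(R^\infty)\geq v^*(x_1)-\varepsilon/2$, and more importantly that for $l$ large the block averages $\frac1l\sum_{m=tl+1}^{(t+1)l} r(k_m)$ are close (in conditional expectation given $\mathcal F_{tl+1}$) to $R^\infty$ once $t$ is large, uniformly along almost every trajectory.

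Next I would handle a \emph{general} normalized $\theta$ by the reduction to block-constant weights announced in the sketch. Writing $\theta=(\theta_m)$ and choosing the block length $l$ from the previous step, I would replace $\theta$ by the evaluation $\bar\theta$ that is constant on each block $\{tl+1,\dots,(t+1)l\}$, equal to the average of $\theta$ there (or to $\theta_{tl+1}$). The cost of this replacement is controlled by the irregularity: since $\sum_m|\theta_m-\theta_{m+1}|$ appears in $I(\theta,x_1,\sigma)$, the total discrepancy $\big|\E^\sigma_{x_1}\sum_m(\theta_m-\bar\theta_m)g(x_m)\big|$ is bounded by a constant times $l\, I(\theta,x_1,\sigma)$, because within each block the weight varies by at most the block's total variation and there are telescoping cancellations across blocks. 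Using $g(x_m)=\E^\sigma_{x_1}(r(k_m)\mid\mathcal F^o_m)$ from \eqref{transformation} lets me work interchangeably with $r(k_m)$ and $g(x_m)$. For the block-constant evaluation I would then write
\[
\E^\sigma_{x_1}\Big(\sum_{m\geq 1}\bar\theta_m r(k_m)\Big)
=\E^\sigma_{x_1}\Big(\sum_{t\geq 0} l\,\bar\theta_{tl+1}\cdot \tfrac1l\!\!\sum_{m=tl+1}^{(t+1)l}\!\! r(k_m)\Big),
\]
and since $\bar\theta$ is normalized the coefficients $l\,\bar\theta_{tl+1}$ sum (in expectation) to $1$, exhibiting the payoff as an average of the conditional block averages weighted by $\theta$. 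Replacing each conditional block average by $R^\infty$ costs at most a term vanishing with $t$, whose weighted sum is again controlled by the tail of $\theta$, hence by $I(\theta,x_1,\sigma)$ via the normalization constraint.

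Assembling these pieces yields $\E^\sigma_{x_1}(\sum_m\theta_m r(k_m))\geq \E^\sigma_{x_1}(R^\infty)-C\,l\,I(\theta,x_1,\sigma)-\varepsilon/2\geq v^*(x_1)-4l\,I(\theta,x_1,\sigma)-\varepsilon$, after absorbing constants and tracking the numerical factor to match the bound $4l$. \textbf{The main obstacle} I anticipate is the block-replacement accounting in the random setting: unlike the deterministic case of \cite{RV17}, the blocks at which $\theta$ is roughly constant and the recurrence time after which block averages stabilize both depend on the realized observed history, so the argument must be carried out pathwise (or via conditional expectations $\E^\sigma_{x_1}(\cdot\mid\mathcal F_{tl+1})$) and the error terms must be bounded uniformly in $\theta$ by $I(\theta,x_1,\sigma)$ rather than by quantities depending on the specific $\theta$. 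Getting the irregularity, and not merely the impatience of a fixed $\theta$, to dominate every discretization and tail error — with an explicit constant like $4l$ — is the delicate step; everything else reduces to standard finite-Markov-chain ergodic estimates.
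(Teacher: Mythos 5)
Your overall architecture is exactly the paper's: take the finite-memory strategy of \cite{CSZ19} that is $\varepsilon/2$-optimal for the $\liminf$ evaluation, view $(\text{state, memory, action, signal})$ as a finite Markov chain, replace $\theta$ by a block-constant evaluation at a cost proportional to $l\,I(\theta,x_1,\sigma)$ (this is Lemma \ref{ecart}, applied twice, which is where the factor $4l$ comes from), and compare each block of length $l$ with the ergodic behaviour of the chain (Lemma \ref{lower}). So the route is the same; the problem is one concrete step of your error accounting, which as written would fail.

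The sentence ``Replacing each conditional block average by $R^\infty$ costs at most a term vanishing with $t$, whose weighted sum is again controlled by the tail of $\theta$, hence by $I(\theta,x_1,\sigma)$ via the normalization constraint'' rests on a false claim: the tail mass of a normalized evaluation is \emph{not} controlled by its irregularity. For instance $\theta_m=\frac{1}{N}\1_{N+1\leq m\leq 2N}$ has $I(\theta)=2/N\rightarrow 0$ while $\sum_{m\geq M}\theta_m=1$ for every $M\leq N+1$; so any argument in which the per-block error is only known to vanish as $t\rightarrow+\infty$ cannot yield a bound of the form $v^*(x_1)-Cl\,I(\theta,x_1,\sigma)-\varepsilon$ uniformly over $\theta\in\Theta_{\mathcal{N}}$, because the weights may concentrate precisely on blocks where that error is still large. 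The paper's Lemma \ref{titi} avoids any tail argument: it chooses $l\geq\max_d n_d$ so that the block estimate holds \emph{uniformly in $t$} --- for every $t\geq 1$, on the event $\{u_{l+1}\in U_d\}$ (ergodic classes are absorbing, so then $u_{tl+1}\in U_d$ as well), one has $\E^{\chi}\bigl(\frac{1}{l}\sum_{m=tl+1}^{(t+1)l}f(u_m)\mid\mathcal{F}_{tl+1}\bigr)\geq\gamma_d-\varepsilon$, where $\gamma_d$ is the almost-sure value of $R^\infty$ on that event. Then (and this is where the $\mathcal{F}^o_m$-measurability of $\theta_m$ is used, to pull the block weight $\omega_{tl+1}$ out of the conditional expectation) the weighted sum of these uniform $\varepsilon$-errors is bounded by $\varepsilon$ using only the normalization $\sum_m\theta_m=1$; the transient class contributes at most $\varepsilon$ in probability after stage $l$, and the first block costs at most $l\,I(\theta,x_1,\sigma)$. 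Replacing ``error vanishing in $t$, controlled by the tail of $\theta$'' by ``error at most $\varepsilon$ for every $t\geq 1$, controlled by normalization'' is exactly the missing step; with it, the rest of your sketch goes through and matches the paper's proof.
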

The key point of the proof is the existence of a pure strategy with finite memory that is $\varepsilon$-optimal in the problem with $\liminf$ evaluation, proved in \cite{CSZ19}. As highlighted before, the $\liminf$ value $\underline{\underline{v}}_\infty(x_1)$ has been shown to be equal to $v^*(x_1)$. We establish the following result for Markov chains:
\begin{lemma}\label{titi}
Consider a finite Markov chain $\chi$ on a state space $U$ and $y_1 \in \Delta(U)$. For all $\epsilon>0$, there exists $l \geq 1$ such that for any normalized history-dependent evaluation $\theta$ defined on $H'_\infty=(U)^{\m{N}}$ and function $f$ defined from $U$ to $[0,1]$, we have
\begin{align}\label{comparison}
\E_{y_1}^{\chi} \left( \sum_{m=1}^{+\infty} \theta_m f(u_m) \right) & \geq \E_{y_1}^{\chi}\left(\liminf_{n\rightarrow +\infty} \frac{1}{n}\sum_{m=1}^n f(u_m) \right)-4l I(\theta,y_1,\chi)-\varepsilon.
\end{align}
\end{lemma}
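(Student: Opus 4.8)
The plan is to exploit the ergodic structure of the finite chain $\chi$ and to compare the weighted sum with a sum of block averages, using a summation-by-parts estimate controlled by the impatience. Write $P$ for the transition operator of $\chi$ and let $(\mathcal{F}_m)_m$ be the natural filtration generated by $(u_1,\dots,u_m)$ on $H'_\infty$. I would rely on two standard facts for finite chains: the Ces\`aro averages $A_n := \frac1n\sum_{m=1}^n f(u_m)$ converge $\m{P}^\chi_{y_1}$-almost surely to a random limit $\Phi$, so that $\liminf_n A_n = \Phi$ a.s.; and $\E^\chi_{y_1}(\Phi \mid \mathcal{F}_m)$ depends only on $u_m$ and equals $(P^\infty f)(u_m)$, where $P^\infty := \lim_{l\to\infty}\frac1l\sum_{j=0}^{l-1}P^j$ is the Ces\`aro limit of the transition operator, which exists for every finite chain, periodic or not. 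Fix a block length $l$, to be chosen, group the stages into consecutive blocks $\{tl+1,\dots,(t+1)l\}$, $t\geq 0$, and set $B_t := \frac1l\sum_{m=tl+1}^{(t+1)l} f(u_m)$ and $\Theta_t := \theta_{tl+1}$. Sampling $\theta$ at block starts is deliberate: $\Theta_t$ is $\mathcal{F}_{tl+1}$-measurable, which is exactly what legitimizes the conditioning used below.

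First I would establish the ergodic estimate. Since $\frac1l\sum_{j=0}^{l-1}P^j \to P^\infty$ and $U$ is finite, one can pick $l$ large (depending only on $\varepsilon$, $f$, $\chi$) so that, for all $t\geq 0$,
\[
\left| \E^\chi_{y_1}\!\left(B_t \cond \mathcal{F}_{tl+1}\right) - \E^\chi_{y_1}\!\left(\Phi \cond \mathcal{F}_{tl+1}\right) \right| \leq \varepsilon ,
\]
because the left conditional expectation equals $\frac1l\sum_{j=0}^{l-1}(P^jf)(u_{tl+1})$ while the right one equals $(P^\infty f)(u_{tl+1})$, the bound being uniform over the finitely many states.

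Second I would carry out the block reduction. Replacing $\theta_m$ by $\Theta_t$ inside block $t$ costs, in absolute value and using $0\le f\le 1$, at most $\sum_{m=tl+1}^{(t+1)l}|\theta_m-\theta_{tl+1}| \le l\sum_{j=tl+1}^{(t+1)l-1}|\theta_{j+1}-\theta_j|$; summing over $t$ and taking expectations bounds the total error by $l\,I(\theta,y_1,\chi)$. The same telescoping estimate gives $\E^\chi_{y_1}\big|\sum_t l\Theta_t - 1\big| \le l\,I(\theta,y_1,\chi)$, using that $\theta$ is normalized. Hence, up to an error $\le l\,I(\theta,y_1,\chi)$,
\[
\E^\chi_{y_1}\!\left(\sum_{m=1}^{+\infty}\theta_m f(u_m)\right) \geq \sum_{t=0}^{+\infty} l\,\E^\chi_{y_1}\!\left(\Theta_t B_t\right) - l\,I(\theta,y_1,\chi).
\]
Because $\Theta_t$ is $\mathcal{F}_{tl+1}$-measurable, the tower property gives $\E^\chi_{y_1}(\Theta_t B_t) = \E^\chi_{y_1}\big(\Theta_t\,\E^\chi_{y_1}(B_t\cond\mathcal{F}_{tl+1})\big)$; the ergodic estimate together with $\Theta_t\ge 0$ and a second use of the tower property (now for $\Phi$) yields $\sum_t l\,\E^\chi_{y_1}(\Theta_t B_t) \ge \E^\chi_{y_1}\big(\Phi\sum_t l\Theta_t\big) - \varepsilon\,\E^\chi_{y_1}\big(\sum_t l\Theta_t\big)$. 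Finally, since $0\le\Phi\le 1$ and $\E^\chi_{y_1}\big|\sum_t l\Theta_t-1\big|\le l\,I(\theta,y_1,\chi)$, one replaces $\sum_t l\Theta_t$ by $1$ at the cost of a further $l\,I(\theta,y_1,\chi)$, reaching $\E^\chi_{y_1}(\Phi)=\E^\chi_{y_1}(\liminf_n A_n)$. To control the constant cleanly I would observe that one may assume $I(\theta,y_1,\chi)<1/(4l)$, since otherwise $4l\,I(\theta,y_1,\chi)\ge 1\ge \E^\chi_{y_1}(\liminf_n A_n)$ and the inequality holds trivially (its right-hand side is negative while the left-hand side is nonnegative); under this assumption the cross term $\varepsilon\, l\,I(\theta,y_1,\chi)$ is absorbed, and collecting the terms (after rescaling $\varepsilon$) gives the bound with constant $4l$.

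The main obstacle is this second step rather than the ergodic estimate, which is a routine consequence of Ces\`aro convergence of the transition operator. The delicate point is that the weights are \emph{random}: every manipulation must preserve the $\mathcal{F}_{tl+1}$-measurability of the sampled weights $\Theta_t$, since it is precisely this measurability that permits conditioning the block averages on the start-of-block information through the tower property. This is exactly the feature that separates the argument from the deterministic computation in \cite{RV17}, and it is the property whose failure is exploited by Propositions \ref{ex1} and \ref{ex2}; arranging that the two telescoping errors and the ergodic error all scale like $l\,I(\theta,y_1,\chi)$ (respectively $\varepsilon$) is where the care is required.
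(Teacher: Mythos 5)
Your argument is correct, and it follows the same skeleton as the paper's proof: cut time into blocks of length $l$, replace $\theta_m$ inside block $t$ by the block-start value $\Theta_t=\theta_{tl+1}$ at a total cost $l\,I(\theta,y_1,\chi)$ (this is exactly the paper's Lemma \ref{ecart}, with the same telescoping estimate), and exploit the $\mathcal{F}_{tl+1}$-measurability of $\Theta_t$ to condition block averages on the block start via the tower property. Where you genuinely differ is in how the ergodic input is packaged. The paper decomposes $U$ into ergodic classes $U_1,\dots,U_D$ and a transient class $U_0$, discards the first block, needs the estimate $\P^\chi_{y_1}(u_{l+1}\in U_0)\le\varepsilon$, and compares \emph{both} sides of (\ref{comparison}) to the hub quantity $\sum_{d}\gamma_d\,\P^\chi_{y_1}(u_{l+1}\in U_d)$ through the auxiliary function $f'$ of Lemma \ref{lower} (using that an ergodic class, once entered, is never left). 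You instead compare both sides directly to $\E^\chi_{y_1}(\Phi)$, where $\Phi=\lim_n A_n$, via the identity $\E^\chi_{y_1}(\Phi \cond \mathcal{F}_m)=(P^\infty f)(u_m)$ and the Ces\`aro convergence $\frac1l\sum_{j=0}^{l-1}P^j\to P^\infty$. This buys you three simplifications: no transient-class estimate, no need to discard the first block, and no case distinction over $\{u_{l+1}\in U_d\}$; it also yields a slightly better constant (your errors total $2l\,I$ plus absorbable terms, versus the paper's $4l\,I$). The price is the extra cross term $\varepsilon\,\E^\chi_{y_1}\bigl(\sum_t l\Theta_t\bigr)$, which the paper's additive bookkeeping avoids; your reduction to the case $I(\theta,y_1,\chi)<1/(4l)$ disposes of it correctly.

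One point to repair: the lemma quantifies $l$ \emph{before} $f$, so $l$ may depend on $\varepsilon$, $\chi$, $y_1$ but not on $f$; your parenthetical ``depending only on $\varepsilon$, $f$, $\chi$'' concedes too much. The fix is already inside your argument: on a finite state space the convergence $\frac1l\sum_{j=0}^{l-1}P^j\to P^\infty$ holds in operator norm, so since $0\le f\le 1$ the ergodic estimate $\bigl|\frac1l\sum_{j=0}^{l-1}(P^jf)(u)-(P^\infty f)(u)\bigr|\le\varepsilon$ holds, for a single $l$, uniformly over all $f:U\to[0,1]$ and all $u\in U$, exactly as the statement requires. (The paper's proof has the same implicit issue, since its $n_d$ and $\gamma_d$ are introduced for a given $f$, and it is resolved by the same operator-norm observation.)
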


%
%
%

Proposition \ref{limit_corollary_0} stems from Lemma \ref{titi}. Indeed, consider 
$\varepsilon>0$. Denote by $\sigma=(\sigma_u,\sigma_a,M,m_0)$ a pure strategy with finite memory that is $(\varepsilon/2)$-optimal at $x_1$ for the problem with $\liminf$ evaluation. The transition $q$ together with $\sigma$ induces a finite Markov chain on $K\times M\times I \times S$ with transition function $\chi$ defined by:
\[
\chi(k,m,i,s)=\sum_{k' \in K,s'\in S} q(k,i')(k',s') \cdot \delta_{k',\sigma_u(m,i',s'),i',s'} \text{ where } i'=\sigma_a(m).
\]
Moreover, we have by construction that the probability $\P_{x_1}^\sigma$ on infinite histories induced by $\sigma$ in the POMDP is equal to the marginal on $H_\infty$ of the probability on $(K\times M \times I \times S)^{\m{N}}$ generated by the Markov chain $\chi$.
Applying Lemma \ref{titi} for $\varepsilon/2$ implies Proposition \ref{limit_corollary_0}. 
\\

The rest of this section is dedicated to the proof of Lemma \ref{titi}.
The state space $U$ being finite, we know that the set of states can be decomposed into ergodic classes $\{U_1,...,U_D\}$ and a transient class $U_0$. Given $U_d$ an ergodic class, there exists $n_d \geq 1$ and $\gamma_d \in [-1,1]$ such that
\begin{align}\label{ergodic}
\forall u\in U_d,\ \forall n\geq n_d,\ \E_u^\chi\left(\frac{1}{n}\sum_{m=1}^n f(u_m) \right) \geq  \gamma_d-\varepsilon,
\end{align}
and on the event $\left\{u_1 \in U_d\right\}$,
\begin{align}\label{convergence}
\frac{1}{n}\sum_{m=1}^n f(u_m) \text{ converges almost surely to } \gamma_d.
\end{align}
Moreover,  there exists $n_0 \geq 1$ such that 
\begin{align}\label{transient}
\forall u\in U,\ \forall n\geq n_0, \ \P^\chi_u(u_n \in U_0)\leq \varepsilon.
\end{align}
Let $l=\max_{d\in \{0,...,D\}} n_d$. The idea will be to decompose the payoff as a convex combination of evaluations that are constant on blocks of size $l$. Let $y_1\in \Delta(U)$.\\
We start by expressing the value for the $\liminf$ evaluation in terms of $(\gamma_d)_{d\in \{1,...,D\}}$. We know that
\begin{align*}
\E_{y_1}^{\chi}\left(\liminf_{n\rightarrow +\infty} \frac{1}{n}\sum_{m=1}^n f(u_m) \right)& =\E_{y_1}^{\chi}\left(\liminf_{n\rightarrow +\infty} \frac{1}{n}\sum_{m=l+1}^{n} f(u_m) \right),\\
& =\E_{y_1}^{\chi}\left(\E_{y_1}^{\chi}\left(\left.\liminf_{n\rightarrow +\infty} \frac{1}{n-l}\sum_{m=l+1}^{n} f(u_m)\right| u_{l+1} \right)\right).
\end{align*}

By Equation (\ref{transient}), we know that the probability that $u_{l+1}$ is in $U_0$ is smaller than $\varepsilon$. By Equation (\ref{convergence}), we know that, conditionally on $u_{l+1}$ to be in $U_d$, the payoff almost-surely converges to $\gamma_d$, hence we obtain
\begin{align}\label{tata0}
\E_{y_1}^{\chi}\left(\liminf_{n\rightarrow +\infty} \frac{1}{n}\sum_{m=1}^n f(u_m) \right)& \leq
\sum_{d=1}^D \gamma_d \P_{y_1}^\chi(u_{l+1} \in U_d)+\varepsilon.
\end{align}
\vspace{5mm}

Consider now $\theta=(\theta_m)_{m\geq 1}$ a history-dependent evaluation. We are going to establish the following lower bound, that is similar to the upper bound of Equation (\ref{tata0}):
\begin{align}\label{tata4}
\E_{y_1}^{\chi} \left( \sum_{m=1}^{+\infty} \theta_m f(u_m) \right) & \geq \sum_{d=1}^D (\gamma_d-\varepsilon)\P_{y_1}^\chi(u_{l+1} \in U_d)-4l I(\theta,y_1,\chi).
\end{align}
Combining Equations (\ref{tata0}) and (\ref{tata4}) implies Lemma \ref{titi}.
The rest of the section is dedicated to the proof of Equation (\ref{tata4}). Let us define a new evaluation $\omega=(\omega_m)_{m\geq 1}$, which is a piecewise-constant approximation of $(\theta_m)_{m \geq 1}$, in the following way: 
$\omega_m= \theta_{tl+1}$,
where $t$ is the unique integer such that $tl+1\leq m \leq (t+1)l$. 

\noindent First, we can bound the difference of the payoff under evaluation $\theta$ from stage $1$ and the payoff under evaluation $\omega$ from stage $l+1$ by a function depending on the irregularity.

\begin{lemma}\label{ecart}
For every function $b:H'_\infty \rightarrow [0,1]$, we have
\begin{align*}
\left|\E_{y_1}^{\chi} \left( \sum_{m=1}^{+\infty} \theta_m b(h'_m) \right)-\E_{y_1}^{\chi} \left( \sum_{m=l+1}^{+\infty} \omega_m b(h'_m) \right)\right|
& \leq 2l I(\theta,y_1,\chi).
\end{align*}
\end{lemma}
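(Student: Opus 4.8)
The plan is to prove the estimate pointwise on each trajectory and then integrate, since the only structural feature being used is the bounded-variation control provided by the irregularity. If $I(\theta,y_1,\chi)=+\infty$ the inequality is vacuous, so I would assume $I(\theta,y_1,\chi)<+\infty$ and set, for each observed play, $V:=|\theta_1|+\sum_{m\geq 1}|\theta_m-\theta_{m+1}|$, which is then finite $\Pr_{y_1}^\chi$-almost surely, with $\E_{y_1}^\chi(V)=I(\theta,y_1,\chi)$. Writing $b_m:=b(h'_m)\in[0,1]$, the first move is to split off the first $l$ terms and regroup the remainder:
\[
\sum_{m=1}^{+\infty}\theta_m b_m-\sum_{m=l+1}^{+\infty}\omega_m b_m=\sum_{m=1}^{l}\theta_m b_m+\sum_{m=l+1}^{+\infty}(\theta_m-\omega_m)\,b_m,
\]
and to bound the two pieces separately. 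These rearrangements are legitimate because, as the block estimate below shows, the tail series is dominated by $lV\in L^1$, so the exchange of summation and the later passage to expectation are justified by Tonelli's theorem.

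For the head, I would use that $\theta_m\geq 0$, $b_m\in[0,1]$, and that the telescoping identity $\theta_m=\theta_1+\sum_{j=1}^{m-1}(\theta_{j+1}-\theta_j)$ gives the pointwise bound $\theta_m\leq V$ for every $m$. Hence $\bigl|\sum_{m=1}^{l}\theta_m b_m\bigr|\leq\sum_{m=1}^{l}\theta_m\leq lV$. For the tail, the crucial observation is that $\omega$ is constant on each block $B_t:=\{tl+1,\dots,(t+1)l\}$ with value $\theta_{tl+1}$, so for $m\in B_t$,
\[
|\theta_m-\omega_m|=\Bigl|\sum_{j=tl+1}^{m-1}(\theta_{j+1}-\theta_j)\Bigr|\leq\sum_{j=tl+1}^{(t+1)l-1}|\theta_{j+1}-\theta_j|.
\]
Summing over the $l$ indices $m\in B_t$ and then over $t\geq 1$, each increment $|\theta_{j+1}-\theta_j|$ is charged at most $l$ times, so $\sum_{m=l+1}^{+\infty}|\theta_m-\omega_m|\leq l\sum_{j\geq 1}|\theta_{j+1}-\theta_j|\leq lV$, and since $b_m\in[0,1]$ this yields $\bigl|\sum_{m=l+1}^{+\infty}(\theta_m-\omega_m)b_m\bigr|\leq lV$.

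Combining the two bounds gives a pointwise estimate of $2lV$ for the quantity inside the absolute value; applying $|\E_{y_1}^\chi(\cdot)|\leq\E_{y_1}^\chi|\cdot|$ and $\E_{y_1}^\chi(V)=I(\theta,y_1,\chi)$ then delivers the claimed inequality. The computation is essentially routine telescoping, so the main point requiring care is the combinatorial bookkeeping in the block argument—verifying that each variation increment $|\theta_{j+1}-\theta_j|$ enters at most $l$ times—together with the accompanying integrability check ($lV\in L^1$) that licenses rearranging the series and swapping sum and expectation; no deeper difficulty is expected.
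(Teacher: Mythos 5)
Your proof is correct and follows essentially the same route as the paper's: the same head/tail decomposition, the same telescoping bound on $\theta_m$ for the first $l$ terms, and the same block-constant comparison charging each increment $|\theta_{j+1}-\theta_j|$ at most $l$ times for the tail. The only cosmetic difference is that you establish the estimate pointwise (via the random variable $V$) and integrate at the end, while the paper takes expectations throughout; this changes nothing of substance.
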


\begin{proof}
We have
\begin{align*}
& \left|\E_{y_1}^{\chi} \left( \sum_{m=1}^{+\infty} \theta_m b(h'_m) \right)-\E_{y_1}^{\chi} \left( \sum_{m=l+1}^{+\infty} \omega_m b(h'_m) \right)\right| \\
\hspace{0.5cm} & \leq \left|\E_{y_1}^{\chi} \left( \sum_{m=1}^{l} \theta_m b(h'_m) \right)\right|+\left|\E_{y_1}^{\chi} \left( \sum_{m=l+1}^{+\infty} \theta_m b(h'_m) \right)-\E_{y_1}^{\chi} \left( \sum_{m=l+1}^{+\infty} \omega_m b(h'_m) \right)\right|.&
\end{align*}

Let us focus on the left-hand term. For every $m\geq 1$, we have
\[
\E_{y_1}^{\chi} \left(|\theta_m|\right) \leq \E_{y_1}^{\chi}\left(|\theta_1|+\sum_{t=1}^{m-1} |\theta_{t+1}- \theta_{t}|\right) \leq I(\theta,y_1,\chi).
\]
Hence, 
\begin{align*}
\left|\E_{y_1}^{\chi} \left( \sum_{m=1}^{l} \theta_m b(h'_m) \right)\right| &\leq \E_{y_1}^{\chi} \left( \sum_{m=1}^{l} |\theta_m| \right)
 \leq l I(\theta,y_1,\chi). 
\end{align*}

The right-hand term is smaller than
\begin{align*}
 \E_{y_1}^{\chi} \left( \sum_{m=l+1}^{+\infty} |\theta_m-\omega_m |\right)
& =\sum_{t=1}^{+\infty} \E_{y_1}^{\chi} \left( \sum_{m=tl+1}^{(t+1)l} |\theta_m-\theta_{tl+1}| \right) \leq \sum_{t=1}^{+\infty} \E_{y_1}^{\chi} \left( l\sum_{m=tl+1}^{(t+1)l-1} |\theta_{m+1}-\theta_m |\right). 
 \end{align*}
 Therefore, the right-hand term is smaller than  $l I(\theta,y_1,\chi)$.
\end{proof}
\noindent Let us now give a lower bound on the payoff under evaluation $\omega$:
\begin{lemma}\label{lower}
We have
\begin{align*}
\E_{y_1}^{\chi} \left( \sum_{m=l+1}^{+\infty} \omega_m f(u_m) \right) \geq \E_{y_1}^{\chi} \left( \sum_{m=l+1}^{+\infty} \omega_m f'(h'_m) \right),
\end{align*}
where $f'$ is a function from histories to $[0,1]$ defined as follows:
\begin{align*}
\forall m \geq 1, \ \forall h'_m\in U^m,\ f'(h'_m)=\begin{cases} \gamma_d-\varepsilon & \text{ if } u_{l+1} \in U_d,\\
0 & \text{ otherwise }.
\end{cases}
\end{align*}
\end{lemma}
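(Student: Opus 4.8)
The plan is to exploit the block structure of $\omega$ together with the Markov property and the fact that ergodic classes are absorbing. Since $\omega_m = \theta_{tl+1}$ for every $m$ in the block $B_t := \{tl+1, \dots, (t+1)l\}$, and since every such $m$ satisfies $m \ge l+1$, so that $f'(h'_m)$ always reads off the same coordinate $u_{l+1}$, both sides decompose over blocks:
\[
\E_{y_1}^\chi\Big(\sum_{m=l+1}^{+\infty} \omega_m f(u_m)\Big) = \sum_{t \ge 1} \E_{y_1}^\chi\Big(\theta_{tl+1}\sum_{m \in B_t} f(u_m)\Big),
\]
and similarly with $f'$ in place of $f$. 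The interchange of summation and expectation is legitimate because $\theta$ is normalized, whence $\sum_{m \ge l+1}\omega_m = l\sum_{t\ge 1}\theta_{tl+1} \le l$ pathwise and all integrands are bounded.

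The core step is a blockwise comparison. Fix $t \ge 1$ and condition on the $\sigma$-field $\mathcal{F}_{tl+1}$ generated by $(u_1, \dots, u_{tl+1})$. Because $\theta$ is history-dependent, $\theta_{tl+1}$ is $\mathcal{F}_{tl+1}$-measurable and nonnegative, so it factors out of the conditional expectation. By the Markov property,
\[
\E_{y_1}^\chi\Big(\sum_{m \in B_t} f(u_m) \,\Big|\, \mathcal{F}_{tl+1}\Big) = \E_{u_{tl+1}}^\chi\Big(\sum_{m=1}^{l} f(u_m)\Big) \quad \text{a.s.},
\]
and I would then show that this conditional expectation dominates the $f'$-block $\sum_{m \in B_t} f'(h'_m)$ pointwise, splitting into two cases according to the class of $u_{l+1}$.

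On $\{u_{l+1} \in U_0\}$ the $f'$-block vanishes while the left-hand conditional expectation is nonnegative (as $f \ge 0$), so the inequality is immediate. On $\{u_{l+1} \in U_d\}$ with $d \ge 1$, absorption of the ergodic class $U_d$ forces $u_{tl+1} \in U_d$ almost surely (since $tl+1 \ge l+1$); hence (\ref{ergodic}) applies with $n = l \ge n_d$ and gives $\E_{u_{tl+1}}^\chi(\sum_{m=1}^{l} f(u_m)) \ge l(\gamma_d - \varepsilon)$, whereas on this event the $f'$-block equals exactly $l(\gamma_d-\varepsilon)$. In both cases the conditional expectation dominates the $f'$-block; multiplying by the nonnegative $\mathcal{F}_{tl+1}$-measurable weight $\theta_{tl+1}$, taking expectations, and summing over $t$ recombines the blocks into the claimed inequality.

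The main obstacle is precisely this blockwise comparison — lining up the ergodic class that $f'$ reads at stage $l+1$ with the class governing the dynamics at the start of each block $B_t$, i.e. at stage $tl+1$. This is where absorption of ergodic classes is indispensable, and it is also the reason the summation starts at $l+1$: it guarantees that each block begins inside the same recurrent class detected at stage $l+1$, so that the ergodic estimate (\ref{ergodic}) can be invoked uniformly across all blocks. The remaining points — measurability of $\theta_{tl+1}$ against $\mathcal{F}_{tl+1}$, the interchange of summation and expectation, and the application of the Markov property — are routine.
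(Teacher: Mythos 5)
Your proposal is correct and follows essentially the same route as the paper: decompose into blocks of length $l$ on which $\omega$ is constant, factor out the $\mathcal{F}_{tl+1}$-measurable weight $\theta_{tl+1}$, apply the Markov property and the ergodic estimate at the start of each block, and use absorption of the recurrent classes to replace the event $\{u_{tl+1}\in U_d\}$ by $\{u_{l+1}\in U_d\}$. The only (harmless) differences are cosmetic: you justify the interchange of expectation and summation explicitly and phrase the comparison as pointwise domination of conditional expectations, where the paper writes the same argument as a chain of inequalities against indicators $\1_{u_{l+1}\in U_d}$.
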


\begin{proof}
\noindent Indeed, we have
\begin{align}\label{tata1}
\E_{y_1}^{\chi} \left( \sum_{m=l+1}^{+\infty} \omega_m f(u_m) \right) & = \E_{y_1}^{\chi}\left(  \E_{y_1}^{\chi}\left(\left. \sum_{m=l+1}^{+\infty} \omega_m  f(u_m)\right| u_{l+1}  \right) \right).
\end{align}

\noindent Let us first focus on what happens on one block. Let $t\geq 0$, we have
\begin{align}
\E_{y_1}^{\chi} \left. \left(  \frac{1}{l}\sum_{m=tl+1}^{(t+1)l} f(u_m) \right| \mathcal{F}_{tl+1} \right) &=
\E_{u_{tl+1}}^{\chi}  \left( \frac{1}{l}\sum_{m=1}^{l} f(u_m) \right),\\
& \geq \begin{cases} \gamma_d-\varepsilon & \text{ on the event} \left\{u_{tl+1} \in U_d \right\},\\
0 & \text{ otherwise}.
\end{cases}
\end{align}

\noindent Moreover, when $u_{l+1}\in U_d$, the state stays forever in $U_d$ almost surely. Thus, one can replace $\left\{u_{tl+1} \in U_d \right\}$ by $\left\{u_{l+1} \in U_d \right\}$ in the above inequality. Hence,
for every $d\in \{1,...,D\}$ and every $t\geq 1$, we have
\begin{align*}
\E_{y_1}^{\chi}\left( \left(\sum_{m=tl+1}^{(t+1)l} \omega_m  f(u_m)\right) \1_{u_{l+1}\in U_d}\right)& \geq \E_{y_1}^{\chi}\left( \omega_{tl+1}l \left(\frac{1}{l}\sum_{m=tl+1}^{(t+1)l} f(u_m)\right) \1_{u_{l+1}\in U_d}\right),\\
& \geq \E_{y_1}^{\chi}\left( \omega_{tl+1}l (\gamma_d-\varepsilon) \1_{u_{l+1}\in U_d}\right),\\
& \geq \E_{y_1}^{\chi}\left( \sum_{m=tl+1}^{(t+1)l} \omega_{m} (\gamma_d-\varepsilon) \1_{u_{l+1}\in U_d}\right).
\end{align*}

\noindent We can now express the payoff under evaluation $\omega$:
\begin{align*}
\E_{y_1}^{\chi} \left( \sum_{m=l+1}^{+\infty} \omega_m f(u_m) \right) & = \sum_{d=0}^D \sum_{t=1}^{+\infty} \E_{y_1}^{\chi}\left( \left(\sum_{m=tl+1}^{(t+1)l} \omega_m  f(u_m)\right) \1_{u_{l+1}\in U_d}\right),\\
& \geq \sum_{d=1}^D \sum_{t=1}^{+\infty} \E_{y_1}^{\chi}\left( \sum_{m=tl+1}^{(t+1)l} \omega_{m} (\gamma_d-\varepsilon) \1_{u_{l+1}\in U_d}\right),\\
& \geq \E_{y_1}^{\chi}\left( \sum_{m=l+1}^{+\infty} \omega_{m} f'(h'_m) \right).
\end{align*}
\end{proof}
Then, we deduce from Lemma \ref{ecart} and Lemma \ref{lower} a lower bound for the payoff evaluated under the evaluation $(\theta_m)_{m\geq 1}$: we have 
\begin{align}
\E_{y_1}^{\chi} \left( \sum_{m=1}^{+\infty} \theta_m f(u_m) \right) & \geq \E_{y_1}^{\chi} \left( \sum_{m=l+1}^{+\infty} \omega_m f(u_m) \right)
-2l I(\theta,y_1,\chi),\\
& \geq \E_{y_1}^{\chi}\left( \sum_{m=l+1}^{+\infty} \omega_{m}f'(h'_m)\right)-2l I(\theta,y_1,\chi),\\
& \geq \E_{y_1}^{\chi}\left( \sum_{m=1}^{+\infty} \theta_{m}f'(h'_m)\right)-4l I(\theta,y_1,\chi),\\
& = \sum_{d=1}^D (\gamma_d-\varepsilon)\P^{y_1}_{\chi}(u_{l+1} \in U_d)-4l I(\theta,y_1,\chi). 
\end{align}


This concludes the proof of Lemma \ref{titi}.

\subsection{The decision-maker guarantees at most $v^*(x_1)$}

We now prove that the decision-maker can not guarantee more than $v^*(x_1)$. 

\begin{proposition}\label{limit_corollary_1}
Let $\Gamma$ be a POMDP and $(\theta^l)_{l\geq 1}$ be a sequence of normalized evaluation such that $(I(\theta^l,x_1))_{l\geq 1}$ converges to $0$. For every $x_1\in X$, we have
\begin{equation*}
\limsup_{l\rightarrow +\infty} v_{\theta^l}(x_1) \leq v^*(x_1).
\end{equation*}
\end{proposition}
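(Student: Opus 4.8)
The plan is to follow the occupation-measure approach sketched in Section~3, reducing the statement to an accumulation point of the occupation measures together with three inequalities. For each $l$, fix a strategy $\sigma^l$ that is $(1/l)$-optimal in $\Gamma_{\theta^l}(x_1)$. Using the reformulation~(\ref{transformation}), the payoff rewrites as
\[
\gamma_{\theta^l}(x_1,\sigma^l)=\E^{\sigma^l}_{x_1}\left(\sum_{m\ge 1}\theta^l_m\, g(x_m)\right)=\int_X g\,d\mu^l,
\]
where $\mu^l:=\E^{\sigma^l}_{x_1}\left(\sum_{m\ge 1}\theta^l_m\,\delta_{x_m}\right)\in\Delta(X)$ is the occupation measure, a genuine probability measure since $\theta^l$ is normalized. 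Because $X=\Delta(K)$ is compact metric, so is $\Delta(X)$, and up to extracting a subsequence I may assume $\mu^l$ converges weak$^*$ to some $\mu^*\in\Delta(X)$ and that $v_{\theta^l}(x_1)$ converges to $\limsup_l v_{\theta^l}(x_1)$. Since $g$ is continuous and $0\le v_{\theta^l}(x_1)-\gamma_{\theta^l}(x_1,\sigma^l)\le 1/l$, weak$^*$ convergence yields the first identity $\limsup_{l\to+\infty}v_{\theta^l}(x_1)=\int_X g\,d\mu^*$.

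The two remaining inequalities are $\int_X g\,d\mu^*\le\int_X v^*\,d\mu^*\le v^*(x_1)$. The left one is immediate: $g(x)\le v^*(x)$ pointwise (the stage payoff never exceeds the asymptotic value, as already used in \cite{RV17}), so integrating against $\mu^*$ gives it. The heart of the argument is the right inequality, which moreover relies on $\mu^*$ being an invariant measure of $\Gamma$.

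For the right inequality I would use a supermartingale argument. Along any trajectory, the value process $(v^*(x_m))_{m\ge 1}$ is a supermartingale for the observed filtration $(\mathcal{F}^o_m)$, i.e. $\E^{\sigma}_{x_1}(v^*(x_{m+1})\mid\mathcal{F}^o_m)\le v^*(x_m)$ with starting value $v^*(x_1)$; this is the dynamic-programming \emph{decreasing property of values along play trajectories}. Writing $T_m:=\sum_{j\ge m}\theta^l_j$, so that $\theta^l_m=T_m-T_{m+1}$, $T_1=1$ and $T_m\to 0$, an Abel summation gives
\[
\E^{\sigma^l}_{x_1}\left(\sum_{m\ge 1}\theta^l_m\,v^*(x_m)\right)=v^*(x_1)+\sum_{m\ge 2}\E^{\sigma^l}_{x_1}\big(T_m\,(v^*(x_m)-v^*(x_{m-1}))\big).
\]
Were each $T_m$ adapted to $\mathcal{F}^o_{m-1}$ and nonnegative, every summand would be nonpositive by the supermartingale property, giving $\int_X v^*\,d\mu^*\le v^*(x_1)$ in the limit. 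The main obstacle is precisely that $T_m$ depends on the \emph{future} weights and is therefore not adapted: this is where the optional sampling theorem and precise martingale inequalities must enter, the non-adaptedness error being controlled by the irregularity $I(\theta^l,x_1)\to 0$ (for instance after replacing $\theta^l$ by a block-constant approximation, as in the lower-bound proof). The second, equally delicate, obstacle is showing that $\mu^*$ is invariant: the one-stage shift of $\mu^l$ differs from $\mu^l$ by a quantity dominated by the irregularity, so the limit $\mu^*$ is fixed by the belief dynamics, but in the random-weight setting exhibiting the stationary strategy witnessing invariance requires a careful disintegration argument, which I would treat in a dedicated section.
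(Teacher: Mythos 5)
Your overall architecture (occupation measures $\mu^l$, an accumulation point $\mu^*$, and the chain $\limsup_l v_{\theta^l}(x_1)=\hat{g}(\mu^*)\leq \widehat{v^*}(\mu^*)\leq v^*(x_1)$) is exactly the paper's, but your justifications of the two inequalities contain genuine errors, not just omissions. First, the pointwise bound $g(x)\leq v^*(x)$ is false: take $K=\{\alpha,\beta\}$ with payoff $1$ in $\alpha$ and $0$ in $\beta$, and deterministic transitions $\alpha\to\beta$, $\beta\to\beta$; then $g(\delta_\alpha)=1$ while $v^*(\delta_\alpha)=\lim_n \frac{1}{n}=0$. The stage payoff can exceed the asymptotic value. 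What is true is the \emph{integrated} inequality $\int_X g\,d\mu^*\leq\int_X v^*\,d\mu^*$ for an \emph{invariant} measure $\mu^*$, and this is precisely where Lemma \ref{invariant} is used in the paper: if $\mu^*$ is invariant under a stationary strategy $\sigma$, the belief process started from $\mu^*$ under $\sigma$ is stationary, Birkhoff's theorem gives almost sure convergence of the average payoff whose expectation is $\hat{g}(\mu^*)$, and since the $\liminf$ value equals $v^*$ (by \cite{VZ16}) this limit is dominated in expectation by $\widehat{v^*}(\mu^*)$. You have attached the invariance hypothesis to the wrong inequality and replaced the one step that actually needs it by a false pointwise claim.

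Second, for $\widehat{v^*}(\mu^*)\leq v^*(x_1)$ your supermartingale idea is the right one, but your Abel summation with the tails $T_m=\sum_{j\geq m}\theta^l_j$ collides, as you note, with the fact that $T_m$ is not adapted, and you leave this obstacle unresolved; moreover your proposed remedy is off target, since the irregularity $I(\theta^l,x_1)$ bounds the variation of the weights, not the discrepancy between $T_m$ and any $\mathcal{F}^o_{m-1}$-measurable version of it. The paper avoids tails altogether: on an enlarged probability space it defines a randomized stopping time $\tau_l$ which, conditionally on not having stopped before stage $s$, stops at stage $s$ with probability $\theta^l_s/(1-\sum_{u<s}\theta^l_u)$. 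This quantity is $\mathcal{F}^o_s$-measurable precisely because the evaluation is history-dependent (partial sums, unlike tails, are adapted); one checks that $\P(\tau_l=m\mid\mathcal{F})=\theta^l_m$, hence $\widehat{v^*}(\mu^l)=\E_{x_1}^{\sigma^l}\left(v^*(x_{\tau_l})\right)\leq v^*(x_1)$ holds exactly, by Doob's optional sampling theorem applied to the bounded supermartingale $(v^*(x_n))$, with no error term and no use of the irregularity in this step. In the paper the irregularity enters only through the proof that $\mu^*$ is invariant. A final remark: the paper actually proves the statement in the more general form of Lemma \ref{limit_lemma_1}, for evaluations normalized merely in expectation, so that the same argument can be reused in the proof of Theorem \ref{limsup}; your restriction to normalized evaluations is fine for the Proposition itself but would not suffice there.
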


If we could {\it a priori} restrict to strategies with finite memory, we would just adapt the argument of the previous section. Unfortunately, this is not the case, and thus we have to proceed differently. In order to prove Proposition \ref{limit_corollary_1}, we establish the following lemma. 

\begin{lemma}\label{limit_lemma_1}
Let $\Gamma$ be a POMDP, $x_1\in X$, $(\sigma^l)_{l\geq 1}$ a sequence of strategies and $(\theta^l)_{l\geq 1}$ a sequence of evaluations such that $\theta^l$ is normalized in expectation at $(x_1,\sigma^l)$, and $(I(\theta^l,x_1))_{l\geq 1}$ converges to $0$.
Then
\begin{equation*}
\limsup_{l\rightarrow +\infty} \gamma_{\theta^l}(x_1,\sigma^l) \leq v^*(x_1).
\end{equation*}
\end{lemma}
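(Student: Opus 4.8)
The plan is to prove Lemma~\ref{limit_lemma_1} via the occupation-measure machinery sketched in Section~3, which reduces the $\limsup$ of the weighted payoffs to three inequalities involving an invariant measure $\mu^*$. First I would, for each $l$, encode the joint behavior of the belief process and the weights under $(x_1,\sigma^l)$ into a single probability distribution over beliefs, the \emph{occupation measure} $\mu^l:=\mu(x_1,\sigma^l,\theta^l) \in \Delta(X)$. Concretely, an observed play together with the sequence $(\theta^l_m)_{m\ge1}$ and the beliefs $(x_m)_{m\ge1}$ determines the measure $\sum_{m\ge1}\theta^l_m(h^o_\infty)\,\delta_{x_m}$ on $X$; since $\theta^l$ is normalized in expectation at $(x_1,\sigma^l)$, taking the expectation under $\P^{\sigma^l}_{x_1}$ yields a genuine probability measure $\mu^l\in\Delta(X)$ satisfying, by linearity and the reformulation \eqref{transformation},
\[
\gamma_{\theta^l}(x_1,\sigma^l)=\int_X g(x)\,\mu^l(dx).
\]
Because $X=\Delta(K)$ is compact, $\Delta(X)$ is compact for $d_{KR}$, so along a subsequence (relabeled) $\mu^l$ converges weakly$^*$ to some $\mu^*\in\Delta(X)$. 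Continuity of $g$ then gives $\limsup_l \gamma_{\theta^l}(x_1,\sigma^l)=\int_X g\,d\mu^*$, which is the first of the three inequalities.

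Next I would invoke the assumption, to be discharged separately in the final subsection, that $\mu^*$ is an invariant measure of $\Gamma$: there is a stationary strategy $\sigma^*:X\to\Delta(I)$ whose image of $\mu^*$ equals $\mu^*$. Granting this, the second inequality
\[
\int_X g(x)\,\mu^*(dx)\le \int_X v^*(x)\,\mu^*(dx)
\]
follows pointwise from $g\le v^*$ (playing optimally from belief $x$ can only beat the one-shot payoff $g(x)$; this is the inequality already used in \cite{RV17}), so it suffices to integrate against $\mu^*$. The third and decisive inequality is
\[
\int_X v^*(x)\,\mu^*(dx)\le v^*(x_1).
\]
Here I would exploit invariance: define the belief process $(X_t)_{t\ge1}$ started at $X_1=x_1$ and driven by the stationary strategy $\sigma^*$, and consider $v^*(X_t)$. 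Since $v^*$ satisfies the dynamic-programming inequality $v^*(x)\ge \E(v^*(X_{t+1})\mid X_t=x)$ associated with the belief MDP, the sequence $(v^*(X_t))_t$ is a bounded supermartingale; the optional sampling theorem gives $\E(v^*(X_t))\le v^*(x_1)$ for all $t$. Invariance of $\mu^*$ under $\sigma^*$ means that if $X_1\sim\mu^*$ then $X_t\sim\mu^*$ for every $t$, whence $\int_X v^*\,d\mu^*=\E_{\mu^*}(v^*(X_t))\le v^*(x_1)$ once one reconciles the initial law $\mu^*$ with the fixed start $x_1$. Chaining the three displays yields $\limsup_l\gamma_{\theta^l}(x_1,\sigma^l)\le v^*(x_1)$.

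The main obstacle I anticipate is the rigorous construction of $\mu^l$ and, above all, the verification that its accumulation point $\mu^*$ is invariant; the excerpt itself flags this as the part that ``is much more intricate'' and warrants a whole section. The subtlety is that the weights $\theta^l_m$ are random and history-dependent, so the one-step consistency that makes the deterministic occupation measure invariant (as in \cite{RV17}) no longer holds termwise. I would control the discrepancy between the law of $x_{m+1}$ and the image of the law of $x_m$ under $\sigma^{\sharp q}$ by a telescoping estimate weighted by $\theta^l$, and bound the resulting error by the irregularity $I(\theta^l,x_1)$: schematically, the failure of invariance at level $l$ is of order $\E^{\sigma^l}_{x_1}\big(\sum_m|\theta^l_m-\theta^l_{m+1}|\big)\le I(\theta^l,x_1)\to0$, a martingale/optional-sampling argument being needed to make this precise since the weights and beliefs are correlated. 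Passing to the weak$^*$ limit then kills the vanishing error and leaves $\mu^*$ exactly invariant. A secondary technical point is justifying the interchange of $\limsup_l$ with the integral; this is handled by weak$^*$ convergence together with continuity and boundedness of $g$, so it should be routine once the compactness setup is in place.
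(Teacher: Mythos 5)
Your overall skeleton (occupation measures $\mu^l$, weak$^*$ accumulation point $\mu^*$, invariance, three chained inequalities) is the same as the paper's, but two of the three inequalities are justified by arguments that do not work. First, your middle inequality rests on the claim that $g\le v^*$ \emph{pointwise}, ``since playing optimally from belief $x$ can only beat the one-shot payoff $g(x)$''. This is false: $v^*(x)$ is the \emph{long-run} value, not a one-stage value, and it can be strictly smaller than the current expected payoff. Take $K=\{a,b\}$, $r(a)=1$, $r(b)=0$, with deterministic transitions $a\to b\to b$: then $g(\delta_a)=1$ while $v^*(\delta_a)=0$. The inequality $\hat{g}(\mu^*)\le \widehat{v^*}(\mu^*)$ is not an integrated pointwise inequality; it is a statement about \emph{invariant} measures. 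The paper's argument is: under the stationary strategy $\sigma$ leaving $\mu^*$ invariant, every stage payoff has expectation $\hat{g}(\mu^*)$, Birkhoff's theorem (as in \cite{VZ16}) gives almost sure convergence of the average payoffs, and the expectation of the limit is bounded by $\int_X \underline{\underline{v}}_\infty(x)\mu^*(dx)=\widehat{v^*}(\mu^*)$ using $\underline{\underline{v}}_\infty=v^*$. So invariance is exactly what makes this step true, not the pointwise comparison you invoke.

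Second, your last inequality $\widehat{v^*}(\mu^*)\le v^*(x_1)$ is derived from a supermartingale for the belief process started at $x_1$ and driven by the stationary strategy $\sigma^*$, followed by the hope of ``reconciling the initial law $\mu^*$ with the fixed start $x_1$''. That reconciliation is precisely what cannot be done: nothing links the law of the belief process under $\sigma^*$ started at $x_1$ with the invariant measure $\mu^*$ (there is no ergodicity or convergence-to-equilibrium available here), so $\E_{\mu^*}(v^*(X_t))$ and $\E_{x_1}(v^*(X_t))$ are unrelated quantities. The paper's route avoids $\sigma^*$ entirely in this step: it works with the pre-limit measures $\mu^l$ under the \emph{original} strategies $\sigma^l$, represents $\widehat{v^*}(\mu^l)=\int_\Omega v^*(x_{\tau_l})\,\xi_l(d\omega)$ via a randomized stopping time $\tau_l$ on an extension $\Omega=H_\infty\times[0,1]^{\N}$ of the probability space (the weights $\theta^l_m$ serve as stopping probabilities), uses that $(v^*(x_n))_n$ is a bounded supermartingale under $\P_{x_1}^{\sigma^l}$ together with the optional stopping theorem to get $\widehat{v^*}(\mu^l)\le v^*(x_1)$ for every $l$, and then passes to the limit using continuity ($1$-Lipschitz character) of $v^*$. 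Your sketch of why $\mu^*$ is invariant (telescoping discrepancy bounded by the irregularity) points in the right direction and matches the spirit of the paper's dedicated subsection, but as written the proof of the lemma fails at the two steps above.
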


Proposition \ref{limit_corollary_1} is an immediate consequence of Lemma \ref{limit_lemma_1}. Indeed, for every $l\geq 1$, consider $\sigma^l$ a strategy $\frac{1}{l}$-optimal in the game $\Gamma_{\theta^l}(x_1)$. Since $\theta^l$ is a normalized evaluation, it is in particular normalized in expectation at $(x_1,\sigma^l)$. Therefore, one can apply Lemma \ref{limit_lemma_1} and obtain Proposition \ref{limit_corollary_1}. Proposition \ref{titi} and Proposition \ref{limit_corollary_1} together then yield Theorem \ref{uniform}.

The remainder of the section is dedicated to the proof of Lemma \ref{limit_lemma_1}. Its proof is decomposed in two steps. In this subsection, admitting that some measure $\mu^*$ is an invariant measure for the POMDP, we prove that $\underline{\underline{v}}_\infty(x_1)=v^*(x_1)$ is the maximal payoff that the decision-maker can guarantee. The next subsection is dedicated to the proof that $\mu^*$ is indeed an invariant measure (Lemma \ref{invariant}). 

Fix $\Gamma$, $x_1\in X$, $(\sigma^l)_{l\geq 1}$ a sequence of strategies and $(\theta^l)_{l\geq 1}$ a sequence of evaluations such that $\theta^l$ is normalized in expectation at $(x_1,\sigma^l)$, and $(I(\theta^l,x_1))_{l\geq 1}$ converges to $0$.
First, one can extract a subsequence of $(\sigma^l)_{l\geq 1}$ and ($\theta^l)_{l\geq 1}$ such that the payoff along the subsequence converges to $\limsup \gamma_{\theta^l}(x_1,\sigma^l)$. For convenience, we still denote these sequences by $(\sigma^l)_{l\geq 1}$ and $(\theta^l)_{l\geq 1}$. Denote by $\pi^l$ the measure $\P_{x_1}^{\sigma^l}$ on $H_\infty$.

              Let $\mu^l \in \Delta(X)$ be the barycenter of the law of $\sum_{m=1}^{+\infty} \theta_m^l \delta_{x_m}$, where $x_m$ is the belief of the decision-maker at stage $m$ along the observed history. Formally, let $\phi$ be the mapping from $H_\infty$ to $\Delta(X)$ defined by
\begin{align*}
\phi(h_\infty)=\sum_{m=1}^{+\infty}\theta^l_m(h_\infty)\delta_{x_m(h_\infty)}.
\end{align*}

Define $\mu^l$ as the projected image of $\pi^l$ by $\phi$: for any measurable function $f$ on $X$,
\begin{align*}
\int_{x\in X} f(x)\mu^l(dx) & = \int_{H_\infty} \left( \int_{X} f(x) \phi(h_\infty)(dx) \right) \pi^l(dh_{\infty}),\\
& = \int_{H_\infty} \left( \sum_{m=1}^{+\infty} \theta^l_m(h_\infty)f(x_m) \right) \pi^l(dh_\infty).
\end{align*}
Since $(\theta^l_m)_{m\geq 1}$ is normalized in expectation at $(x_1,\sigma^l)$, $\mu_l\in \Delta(X)$.

The state space $\Delta(X)$ being compact for the weak* topology, we can consider $\mu^*$ an accumulation point of the sequence $(\mu^l)_{l\geq 1}$. The following lemma is crucial, and its proof is done in the next subsection. 
\begin{lemma}\label{invariant}
$\mu^*$ is an invariant measure.
\end{lemma}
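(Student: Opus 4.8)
\textbf{Proof proposal for Lemma \ref{invariant}.}

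The plan is to exhibit a stationary strategy $\sigma^*:X\rightarrow\Delta(I)$ such that the image of $\mu^*$ by $\sigma^*$ (in the sense of Definition \ref{image}) is again $\mu^*$, by taking a limit of the one-step structure captured by the finite-horizon measures $\mu^l$. The key observation is that, informally, $\mu^l$ assigns mass $\theta^l_m$ to the belief $x_m$; applying one step of the belief dynamics shifts the index from $m$ to $m+1$, and because $\theta^l$ has small irregularity, the distribution of $(\theta^l_m, x_{m+1})$ should be close to that of $(\theta^l_{m+1}, x_{m+1})$, which is exactly what is needed to match $\mu^*$ before and after the move. Concretely, I would introduce, together with $\mu^l$, a second measure $\nu^l$ encoding the joint law of the action played and the resulting belief, so that $\nu^l$ records not only where mass sits but also how it is transported one stage forward.

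First I would fix, for each $l$, a disintegration of the belief-action process under $\pi^l=\P^{\sigma^l}_{x_1}$: since $x_m$ is $\mathcal{F}^o_m$-measurable and $\sigma^l$ prescribes an action distribution after each observed history, the pair $(x_m, \text{action at } m)$ is determined by the observed history, and one step of $\overline{q}$ sends this to $x_{m+1}$. I would define a measure $\eta^l\in\Delta(X\times I)$ as the projected image of $\pi^l$ under $h_\infty\mapsto \sum_{m\ge 1}\theta^l_m(h_\infty)\,\delta_{(x_m,i_m)}$, so that its $X$-marginal is exactly $\mu^l$. Disintegrating $\eta^l$ over $X$ yields a measurable kernel $\sigma^l_\star:X\rightarrow\Delta(I)$, i.e.\ a stationary strategy associated to $\mu^l$. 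By compactness of $X\times I$ (recall $I$ is finite and $X$ is compact), I would pass to a subsequence along which $\eta^l$ converges weak$^*$ to some $\eta^*\in\Delta(X\times I)$ whose $X$-marginal is the chosen accumulation point $\mu^*$; disintegrating $\eta^*$ gives a limiting stationary strategy $\sigma^*$.

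The heart of the argument is then to show that the image of $\mu^*$ by $\sigma^*$ equals $\mu^*$. For this I would test against an arbitrary $f\in\mathcal{C}(X,[-1,1])$ and compare $\int_X f\,d\mu^*$ with $\int_X \widehat{f}(\sigma^{\sharp q}_*(x))\,\mu^*(dx)$, where the latter is the integral of $f$ after one step of the belief dynamics. Unwinding the definition of $\mu^l$ and of the belief update, the post-step quantity at level $l$ is, up to controlled error, $\E^{\sigma^l}_{x_1}\!\left(\sum_{m\ge 1}\theta^l_m\, f(x_{m+1})\right)$, which uses the martingale property $\E(f(x_{m+1})\mid\mathcal{F}^o_m)$ together with the definition of $\overline{q}$. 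The difference between this and $\int_X f\,d\mu^*\approx \E^{\sigma^l}_{x_1}(\sum_{m\ge1}\theta^l_m f(x_{m+1}))$ versus $\E^{\sigma^l}_{x_1}(\sum_{m\ge1}\theta^l_{m+1}f(x_{m+1}))$ is bounded by
\[
\E^{\sigma^l}_{x_1}\!\left(\sum_{m\ge 1}\bigl|\theta^l_m-\theta^l_{m+1}\bigr|\right)\;\le\; I(\theta^l,x_1),
\]
which tends to $0$ by hypothesis; the re-indexing boundary term is controlled by $\E^{\sigma^l}_{x_1}(|\theta^l_1|)\le I(\theta^l,x_1)$ as well. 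Here the crucial point is that $f(x_{m+1})$ is measured against both weights, and only the weight indices differ by one, so small irregularity forces the limiting ``before'' and ``after'' measures to coincide.

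The main obstacle I anticipate is the bookkeeping needed to interchange the one-step belief map with the weak$^*$ limit: the belief-update map $\overline{q}$ is continuous on the (finite-dimensional simplex) $X$, but one must ensure that the disintegration kernels $\sigma^l_\star$ converge in a way compatible with $\sigma^{\sharp q}$, and that the infinite sum defining $\phi$ does not create tail contributions that survive in the limit. I would handle the latter by a uniform integrability argument stemming from normalization in expectation, $\E^{\sigma^l}_{x_1}(\sum_m\theta^l_m)=1$, and truncation of the sum at a large but fixed horizon, estimating the tail via the irregularity bound $\E^{\sigma^l}_{x_1}(\theta^l_m)\le I(\theta^l,x_1)$ for each $m$. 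Combining the weak$^*$ convergence $\eta^l\to\eta^*$, the continuity of $g$ and of the belief dynamics, and the vanishing irregularity then yields that $\mu^*$ is fixed by the stationary strategy $\sigma^*$, i.e.\ $\mu^*$ is an invariant measure.
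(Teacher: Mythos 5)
Your proposal is correct in substance, and its handling of the limit step is genuinely different from the paper's. The paper works with measures on \emph{finite observed histories}: it defines $\theta^l_f\in\Delta(H^o_f)$ (the $\theta^l$-weighted occupation measure on observed histories) and a one-step-shifted version $\theta'^l_f$, proves $\|\theta^l_f-\theta'^l_f\|_1\le 2I(\theta^l,x_1)$, pushes both forward to $X$ through the end-belief map $\xi$ to obtain $\mu^l$ and $\mu'^l$ with $d_{KR}(\mu^l,\mu'^l)\le\|\theta^l_f-\theta'^l_f\|_1$, constructs $\sigma^l_*$ by disintegrating over end-beliefs, and then --- crucially --- invokes Propositions 20 and 32 of \cite{VZ16} to pass from ``$\mu'^l$ is the image of $\mu^l$ by some stationary strategy, and both converge to $\mu^*$'' to ``$\mu^*$ is invariant''. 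You instead lift everything to joint (belief, action) measures $\eta^l\in\Delta(X\times I)$, whose $X$-marginal is $\mu^l$, extract a weak$^*$ limit $\eta^*$ by compactness of $\Delta(X\times I)$, and take $\sigma^*$ to be the disintegration of $\eta^*$. Your irregularity estimate (shifting the weight index from $\theta^l_m$ to $\theta^l_{m+1}$ against $f(x_{m+1})$, using $\mathcal{F}^o_m$-measurability of $\theta^l_m$ and the tower property, at a cost of at most $I(\theta^l,x_1)$) is the exact analogue of the paper's $L^1$ lemma, carried out directly on beliefs. What your route buys is self-containedness: the image of $\mu$ by $\sigma$ in the sense of Definition \ref{image} depends on the pair $(\mu,\sigma)$ only through the joint law $\eta$, since $\int_X f\,d\nu=\int_{X\times I}F_f\,d\eta$ with $F_f(x,i)=\sum_{k,s}x(k)\,q(k,i)(s)\,f(\overline{q}(x,i,s))$, so weak$^*$ convergence of $\eta^l$ alone closes the argument, with no appeal to external results.

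Two caveats, the first of which is a genuine (though reparable) flaw in your justification. Your claim that ``the belief-update map $\overline{q}$ is continuous on $X$'' is false: $\overline{q}(\cdot,i,s)$ is discontinuous (indeed ill-defined) at beliefs $x$ with $q(x,i)(s)=0$, and this is exactly the delicate point of POMDP belief dynamics. What is true, and what your argument actually needs, is that $F_f$ above is continuous for every continuous $f$: the update $\overline{q}(x,i,s)$ enters only multiplied by the weight $q(x,i)(s)$, which is linear in $x$ and vanishes precisely where $\overline{q}$ misbehaves, so $|q(x,i)(s)f(\overline{q}(x,i,s))|\le q(x,i)(s)\|f\|_\infty\to 0$ near such points. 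With this corrected statement, $\int F_f\,d\eta^l\to\int F_f\,d\eta^*$ and your proof closes. Second, your worry that the disintegration kernels $\sigma^l_\star$ ``must converge compatibly'' is unfounded in your own framework: no convergence of kernels is needed, precisely because the image measure is a function of $\eta$ alone; likewise the tail issue is vacuous, since normalization in expectation makes each $\eta^l$ a genuine probability measure. Once these points are cleaned up, your argument is complete and arguably more transparent than the paper's reliance on \cite{VZ16}.
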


Let $\sigma$ such that $\mu^*$ is invariant under $\sigma$. As proved by the authors \cite{VZ16}, Birkhoff's theorem implies that under $\sigma$ and starting from $\mu^*$, the average payoff converges almost surely to a random variable with expectation $\widehat{v^*}(\mu^*)$. Hence, we have $\hat{g}(\mu^*) \leq \widehat{v^*}(\mu^*)$, and
\begin{align}
\gamma_{\theta^l}(x_1,\sigma^l) & = \E_{x_1}^{\sigma^l} \left( \sum_{m=1}^{+\infty} \theta^l_m r(k_m) \right)
 = \E_{x_1}^{\sigma^l} \left( \sum_{m=1}^{+\infty} \theta^l_m g(x_m) \right)
 = \hat{g}(\mu^l).
\end{align}
The function $g$ being $1$-Lipschitz on $(X,\|.\|_1)$, we deduce that $\hat{g}$ is $1$-Lipschitz on $(\Delta(X),d_{KR})$, hence continuous. Thus, by taking the limit when $l$ goes to infinity, we obtain that

\begin{equation*}
\limsup_{l\rightarrow +\infty} \gamma_{\theta^l}(x_1,\sigma^l) = \hat{g}(\mu^*).
\end{equation*}


We have $\m{E}_{x_1}^{\sigma^l}(v^*(x_{n+1}) | \mathcal{F}_n) \leq v^*(x_n)$. Moreover, since the payoff is bounded, $v^*$ is bounded. We will reformulate the payoff by introducing a randomized stopping-time in order to apply Doob optional stopping theorem. An interpretation of the payoff $\gamma_{\theta^l}(x_1,\sigma^l)$ can be the following: at every stage, there is a randomized variable that decides whether the process stops or continues; if the process stops, then the payoff is the current stage payoff whereas if the process continues, the payoff is $0$. 

Define the  extended space $\Omega=H_\infty \times [0,1]^{\N}$ and $\xi_l$ the extension of $\pi^l$ by i.i.d. uniform random variable on $[0,1].$ We then define the stopping time $\tau$ on $\Omega$ by
 \[
 \tau_l(h_\infty,x_1,...,x_n)=t  \text{ when for every } s<t,\ x_s \geq \frac{\theta^l_s}{1-\sum_{u=1}^{s-1} \theta^l_u} \text{ and } x_t \leq \frac{\theta^l_t}{1-\sum_{u=1}^{t-1} \theta^l_u}.
 \]
By definition of $\mu^l$, $\pi^l$ and $\xi_l$, we have
\begin{align*}
\widehat{v^*}(\mu^l)=\int_{X} v^*(x)\mu^l(dx)
              =\int_{H_\infty} \left(\sum_{m=1}^{+\infty} \theta^l_m v^*(x_m) \right)\pi^l(dh_\infty)
              =\int_{\Omega} v^*(x_{\tau_l}) \xi_l(d\omega).           
\end{align*}
The process $(v^*(x_n))$ is a super-martingale on the extended space $\Omega$, and is bounded. Thus, the optional stopping theorem yields
\begin{equation*}
\int_{\Omega} v^*(x_{\tau_l}) \xi_l(d\omega) \leq v^*(x_1).
\end{equation*}
Thus, setting $l$ to infinity, $\widehat{v^*}(\mu^*) \leq v^*(x_1)$. 
Hence,
\begin{equation}
\limsup_{l\rightarrow +\infty} \gamma_{\theta^l}(x_1,\sigma^l) = \hat{g}(\mu^*) \leq \widehat{v^*}(\mu^*) \leq v^*(x_1).
\end{equation}

This concludes the proof of Lemma \ref{limit_lemma_1}.\\

\subsection{Proof of Lemma \ref{invariant}}
The aim of this section is to prove that the measure $\mu^*$ defined in the previous section is an invariant measure. The proof is decomposed as follows. For every $l\geq 1$, we construct $\theta^l_f \in \Delta(H^o_f),\theta'^l_f \in \Delta(H^o_f)$ and $\mu^l,\mu'^l \in \Delta(X)$ such that 
\begin{enumerate}[label=(\roman*)]
\item \label{toto1}
\[
\|\theta'^l_f-\theta^l_f\|_1:=\sum_{h\in H^o_f} |\theta'^l_f(h)-\theta^l_f(h)| \leq 2I(\theta^l,x_1),
\]
\item \label{toto2}
\[
d_{KR}(\mu'^l,\mu^l) \leq \|\theta'^l_f-\theta^l_f\|_1,
\]
\item \label{toto3}$(\mu^l)_{l\geq 1}$ converges to $\mu^*$,
\item \label{toto4} There exists a strategy $\sigma_*^l$ such that $\mu'^l$ is the image of $\mu^l$ by $\sigma^l_*$.
\end{enumerate}

We can then combine these results to obtain the proof of Lemma \ref{invariant}. Indeed, by $\ref{toto3}$, $(\mu^l)_{l\geq 1}$ converges to $\mu^*$. Since $I(\theta^l,x_1)$ converges to $0$, $\ref{toto1}$ and $\ref{toto2}$ imply that $(\mu'^l)_{l\geq 1}$ also converges to $\mu^*$ when $l$ goes to infinity. The next step is to extend $\ref{toto4}$ to the limit and show the existence of a strategy $\sigma^*$ such that $\mu^*$ is the image of $\mu^*$ by $\sigma^*$. In order to obtain this result, we apply Proposition 20 and Proposition 32 in \cite{VZ16}. Hence, we obtain that $\mu^*$ is an invariant measure. 



\subsubsection{Definition of $\theta^l_f$ and $\theta'^l_f$}

We first define $\theta^l_f$.  We consider the mapping $\psi$ from $H_\infty$ to $\Delta(H^o_f)$ such that
\begin{align*}
\psi(h_\infty)=\sum_{m=1}^{+\infty} \theta^l_m(h_\infty)\delta_{h^o_m},
\end{align*}
where $h^o_m$ is the truncation of $h_\infty$ up to stage $m$ restricted to actions and signals. Define $\theta^l_f$ to be the projected image of $\pi^l$ by $\psi$. \\

Informally, $\theta'^l_f$ is defined in order to fit the following story: an observed history $h$ is chosen randomly following $\theta^l_f$, and told to the decision-maker. He then plays $\sigma^l(h)$, yielding a new observed history of length $|h|+1$. Formally, let $\xi$ be the mapping from the set of observed histories to $X$ that associates to an observed history $h_n$ the belief of the last stage $x_n$.
Denote by $\theta'^l_f$ the distribution such that
\[
\forall (h,i,s)\in H^o_f\times I \times S, \ \theta'^l_f(h,i,s)=\theta^l_f(h)q(\xi(h),i)(s)\sigma^l(h)(i).
\]

We now prove that the distance between $\theta^l_f$ and $\theta'^l_f$ is controlled by the irregularity.
\begin{lemma}
We have
\[
\|\theta^l_f-\theta'^l_f\|_1 \leq 2 I(\theta^l,x_1).
\]
\end{lemma}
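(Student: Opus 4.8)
The goal is to compare the two distributions $\theta^l_f$ and $\theta'^l_f$ on $H^o_f$. The key observation is that both are built from the same underlying occupation-type measure, and they differ only because $\theta'^l_f$ pushes each observed history $h$ forward by one stage according to $\sigma^l$. The plan is to rewrite $\|\theta^l_f-\theta'^l_f\|_1$ by grouping observed histories according to their length and to exploit a telescoping/shift structure, so that the $L^1$ distance collapses into a sum of consecutive weight differences, which is exactly what the irregularity $I(\theta^l,x_1)$ controls.

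First I would obtain an explicit formula for $\theta^l_f$ evaluated at an observed history $h$ of length $n$. By definition of the projected image of $\pi^l$ by $\psi$, for each $h \in H^o_n$ we have
\[
\theta^l_f(h)=\int_{H_\infty} \theta^l_n(h_\infty)\,\1_{\{h^o_n=h\}}\,\pi^l(dh_\infty)=\E^{\sigma^l}_{x_1}\!\left(\theta^l_n\,\1_{\{h^o_n=h\}}\right),
\]
since $\psi$ places mass $\theta^l_m$ on the truncation $h^o_m$, and only the length-$n$ term contributes to a length-$n$ observed history. Next I would compute the mass that $\theta'^l_f$ assigns to histories of length $n+1$: by its defining formula $\theta'^l_f(h,i,s)=\theta^l_f(h)q(\xi(h),i)(s)\sigma^l(h)(i)$, summing over the one-step extensions $(i,s)$ of a length-$n$ history $h$ recovers $\theta^l_f(h)$ (because $q(\xi(h),i)(s)$ and $\sigma^l(h)(i)$ are probability weights summing to one). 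Crucially, the transition weights $q(\xi(h),i)(s)\sigma^l(h)(i)$ are precisely the one-step conditional law of the observed process under $\sigma^l$ starting from belief $\xi(h)$; this identifies $\theta'^l_f(h')$, for $h'$ of length $n+1$, with $\E^{\sigma^l}_{x_1}(\theta^l_n\,\1_{\{h^o_{n+1}=h'\}})$, i.e. the stage-$n$ weight transported along the genuine dynamics to stage $n+1$.

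With both quantities expressed as expectations against $\pi^l=\P^{\sigma^l}_{x_1}$, the difference telescopes. Summing $|\theta^l_f(h)-\theta'^l_f(h)|$ over all $h\in H^o_f$ and regrouping by length, the term at length $m$ compares the transported stage-$(m-1)$ weight $\theta^l_{m-1}$ with the native stage-$m$ weight $\theta^l_m$ on the same cylinder, so by the triangle inequality the total is bounded by $\E^{\sigma^l}_{x_1}(\theta^l_1)+\sum_{m\ge 1}\E^{\sigma^l}_{x_1}(|\theta^l_m-\theta^l_{m+1}|)$, where the leading $\theta^l_1$ term accounts for the stage-$1$ histories of $\theta^l_f$ that have no transported counterpart (and symmetrically for the tail). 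Each such expectation is at most $I(\theta^l,x_1,\sigma^l)\le I(\theta^l,x_1)$, and the two one-sided discrepancies together yield the factor $2$, giving $\|\theta^l_f-\theta'^l_f\|_1\le 2I(\theta^l,x_1)$.

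The main obstacle I anticipate is the bookkeeping in the telescoping step: one must carefully match a length-$(n+1)$ history in $\theta'^l_f$ with the correct length-$n$ prefix and verify that the transition weights in the definition of $\theta'^l_f$ coincide with the actual observed-process transition probabilities under $\sigma^l$ at belief $\xi(h)$. This is where the belief-consistency of $\xi$ (that $\xi(h)$ really is the conditional law of the state given $h$) and the action-independence of the payoff are implicitly used to align the two measures stage by stage. Once this identification is in place, the estimate is a direct consequence of the definition of irregularity; the content of the lemma is entirely in correctly setting up the shift and recognizing that the resulting sum is the total variation of the weight sequence.
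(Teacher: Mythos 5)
Your proposal is correct and follows essentially the same route as the paper's proof: express $\theta^l_f$ on length-$n$ histories as $\E^{\sigma^l}_{x_1}(\theta^l_n\,\1_{\{h^o_n=h\}})$ using the $\mathcal{F}^o_n$-measurability of $\theta^l_n$, identify $\theta'^l_f$ on length-$(n+1)$ histories as the stage-$n$ weight transported one step by the true observed-process kernel (the Bayes consistency $\pi^l_{m+1}((h,i,s))=\pi^l_m(h)\sigma^l(h)(i)q(\xi(h),i)(s)$), and sum the stage-by-stage differences, which reproduces the defining sum of the irregularity. Only a cosmetic remark: the boundary discrepancy occurs solely at the empty history (there is no tail term, and action-independence of the payoff plays no role here), so the sharp bound is in fact $I(\theta^l,x_1)$ itself, with the factor $2$ pure slack, exactly as in the paper.
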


\begin{proof}
For the proof, it will be convenient to decompose histories according to their length. For every $m\geq 1$, denote by $\pi^l_m$ the image of $\pi^l$ by the projection on the histories of length $m$. Let $m\geq 1$, $h'_m\in H^o_m$ and $(i,s)\in I \times S$, then we have
\begin{align*}
\theta^l_f((h'_m,i,s))&=\int_{h_\infty\in H_\infty} \left( \theta_{m+1}^l(h_\infty)\1_{h_\infty\left|_{m+1}\right.=(h'_m,i,s)}\right)\pi^l(dh_\infty), \\
& = \pi^l_{m+1}((h'_m,i,s))\theta^l_{m+1}((h'_m,i,s)),
\end{align*}
since $\theta^l_{m+1}$ is $\mathcal{F}^o_{m+1}$-measurable. Moreover, we have
\begin{align*}
\theta'^l_f((h'_m,i,s))&=\theta^l_f(h'_m)q(\xi(h'_m),i)(s)\sigma^l(h'_m)(i),\\
& = \pi^l_m(h'_m)\theta_m^l(h'_m)q(\xi(h'_m),i)(s)\sigma^l(h'_m)(i),\\
& = \pi^l_{m+1}((h'_m,i,s))\theta_m^l(h'_m),
\end{align*}
since 
\[
\pi^l_{m+1}((h'_m,i,s))=\pi^l_m(h'_m)\sigma^l(h'_m)(i)q(\xi(h'_m),i)(s).
\]
It follows that 
\begin{align*}
&\sum_{(h'_m,i,s)\in H^o_m\times I  \times S} |\theta'^l_f((h'_m,i,s))-\theta^l_f((h'_m,i,s))| 
\\
& = \sum_{(h'_m,i,s)\in H^o_m\times I  \times S} \pi^l_{m+1}((h'_m,i,s))|\theta_m^l(h'_m)-\theta^l_{m+1}((h'_m,i,s))|,\\
&= \int_{h_\infty \in H_\infty} |\theta_m^l(h_\infty)-\theta^l_{m+1}(h_\infty)|\pi^l(dh_\infty).
\end{align*}
The result follows by summation over $m$.
\end{proof}

\subsubsection{Link with $\mu^l$ and definition of $\mu'^l$}

We define $\mu'^l$ to be the image of $\theta'^l_f$ by $\xi$. Moreover, we have the following results.

\begin{lemma}
\textcolor{white}{}
\begin{itemize}
\item $\mu^l$ is the image of $\theta^l_f$ by $\xi$,
\item $d_{KR}(\mu^l,\mu'^l) \leq \|\theta^l_f-\theta'^l_f\|_1$.
\end{itemize}
\end{lemma}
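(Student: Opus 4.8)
The plan is to prove both bullet points by unwinding the definitions of \emph{image of a measure} and \emph{projected image}, and reducing each statement to a routine computation about measures on the finite set $H^o_f$. Recall that $\mu^l$ was defined in the previous subsection as the projected image of $\pi^l$ by the map $\phi(h_\infty)=\sum_{m\geq 1}\theta^l_m(h_\infty)\delta_{x_m(h_\infty)}$, that $\theta^l_f$ is the projected image of $\pi^l$ by $\psi(h_\infty)=\sum_{m\geq 1}\theta^l_m(h_\infty)\delta_{h^o_m}$, and that $\xi:H^o_f\to X$ sends an observed history to the belief $x_n$ at its last stage. The key observation, which I would establish first, is the factorization $\phi=\xi\circ\psi$ in the appropriate sense: for every $h_\infty$, applying $\xi$ to the atoms $\delta_{h^o_m}$ of $\psi(h_\infty)$ recovers $\delta_{x_m(h_\infty)}$, since $x_m=\xi(h^o_m)$ by definition of the belief.

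For the first bullet, I would show that the image of $\theta^l_f$ by $\xi$ equals $\mu^l$. The natural route is to test against an arbitrary measurable $f:X\to[-1,1]$ and chase the characterizations: by definition of the image of a measure,
\[
\int_X f\,d(\xi_*\theta^l_f)=\int_{H^o_f} f(\xi(h))\,\theta^l_f(dh),
\]
and since $\theta^l_f$ is the projected image of $\pi^l$ by $\psi$, the right-hand side equals
\[
\int_{H_\infty}\Big(\sum_{m\geq 1}\theta^l_m(h_\infty)f(\xi(h^o_m))\Big)\pi^l(dh_\infty)
=\int_{H_\infty}\Big(\sum_{m\geq 1}\theta^l_m(h_\infty)f(x_m)\Big)\pi^l(dh_\infty),
\]
which is exactly $\int_X f\,d\mu^l$ by the defining formula of $\mu^l$. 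Since this holds for all test functions $f$, the two measures coincide.

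For the second bullet, the clean approach is to use the first bullet together with the fact that the Kantorovich--Rubinstein distance is contractive under pushforward by a $1$-Lipschitz map, and more generally bounded by the total variation of the source measures. Concretely, since $\mu^l=\xi_*\theta^l_f$ and $\mu'^l=\xi_*\theta'^l_f$, for any $1$-Lipschitz $h\in E_1$ I would write
\[
\Big|\int h\,d\mu^l-\int h\,d\mu'^l\Big|
=\Big|\sum_{w\in H^o_f}h(\xi(w))\big(\theta^l_f(w)-\theta'^l_f(w)\big)\Big|
\leq \sum_{w\in H^o_f}|\theta^l_f(w)-\theta'^l_f(w)|,
\]
using $|h\circ\xi|\leq 1$, and then take the supremum over $h\in E_1$ to conclude $d_{KR}(\mu^l,\mu'^l)\leq \|\theta^l_f-\theta'^l_f\|_1$.

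I do not expect a serious obstacle here; the statement is essentially a bookkeeping lemma packaging the two auxiliary measures into the projected-image formalism. The one point that requires care is the first bullet's interchange of the countable sum $\sum_m\theta^l_m$ with the integral against $\pi^l$, which is justified because $\theta^l$ is normalized in expectation at $(x_1,\sigma^l)$, so $\sum_m\theta^l_m$ is $\pi^l$-integrable and Fubini--Tonelli applies to the nonnegative summands. Verifying that $\xi$ is measurable (so that the pushforwards are well-defined) is immediate since $H^o_f$ is countable.
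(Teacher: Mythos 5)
Your proof is correct and follows essentially the same route as the paper: the first bullet is the same test-function computation exploiting $\xi(h^o_m)=x_m$ and the uniqueness in the characterization of the projected image, and the second bullet is the same pushforward estimate, the only cosmetic difference being that the paper justifies the final inequality by noting $f\circ\xi$ is $1$-Lipschitz for the discrete metric on $H^o_f$, whereas you bound directly via $|h\circ\xi|\leq 1$, which amounts to the same thing. One nit: $H^o_f$ is countable, not finite (as you yourself note at the end), but nothing in your argument uses finiteness.
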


\begin{proof}
\underline{$\theta^l_f$ and $\mu^l$:} By definition, $\theta^l_f$ satisfies that for every measurable function $f: H^0_f \rightarrow \m{R}$, we have 
\begin{align*}
\int_{H^o_f} f(h)\theta^l_f(dh) & = \int_{H_\infty} \left(\sum_{m=1}^{+\infty} \theta_m^l(h_\infty)f(h_m) \right) \pi^l(dh_\infty).
\end{align*}
Denote by $\nu$ the image of $\theta^l_f$ by $\xi$, then for every $f: X \rightarrow \m{R}$ measurable, we have
\begin{align*}
\int_{X} f(x)\nu(dx)  =\int_{H^o_f} f(\xi(h)) \theta^l_f(dh)
= \int_{H_\infty} \left(\sum_{m=1}^{+\infty} \theta_m^l(h_\infty)f(x_m) \right) \pi^l(dh_\infty).
\end{align*}
Hence, $\nu$ is the projected image of $\pi^l$ by $\phi$ and, by uniqueness, $\nu$ is equal to $\mu^l$.\\

\underline{$d_{KR}(\mu^l,\mu'^l) \leq \|\theta^l_f-\theta'^l_f\|_1$:}
The $L^1$ norm can be seen as the Kantorovitch-Rubinstein metric associated to the discrete distance $d$ on $H_f$. Hence, if $f$ is $1$-Lipschitz from $X$ with $L^1$-norm to $[-1,1]$, then the mapping $f \circ \xi$ is $1$-Lipschitz for $d$ since
\[
|f(\xi(h))-f(\xi(h'))|\leq \|\xi(h)-\xi(h')\|_1 \leq d(h,h').
\]
It follows that
\begin{align*}
\int_{x\in X} f(x) \mu^l(dx)-\int_{x\in X} f(x) \mu'^l(dx) & =\int_{h\in H^o_f} f(\xi(h)) \theta^l_f(dh)-\int_{h\in H^o_f} f(\xi(h)) \theta'^l_f(dh),\\
& \leq \|\theta^l_f-\theta'^l_f\|_1.
\end{align*}
Since it is true for any $1$-Lipschitz function $f$, we obtain $d_{KR}(\mu^l,\mu'^l)\leq \|\theta^l_f-\theta'^l_f\|_1$.
\end{proof} 

\subsubsection{Definition of the strategy $\sigma^*$}

We now construct the strategy $\sigma^l_*$ that maps $\mu^l$ to $\mu'^l$. Consider the distribution $\theta^l_f$, we can define the extended measure denoted $\overline{\theta^l_f}$ over $H^o_f\times X$ by associating to an observed history $h$, its end-belief $\xi(h)$.
We can then consider the disintegration with respect to the last coordinate. There exists a kernel $\mathcal{K}: X \times \mathcal{B}(H^o_f) \rightarrow [0,1]$ such that
for all $f: H^0_f \times X \rightarrow \m{R}$ measurable, 
\begin{align*}
\int_{(h,x)\in H^o_f\times X} f(h,x) \overline{\theta^l_f}(dh,dx)=\int_{x \in X} \left( \int_{h \in H^o_f} f(h,x) \mathcal{K}(x,dh) \right) \mu^l(dx).
\end{align*}
Informally, the kernel $\mathcal{K}(y,dh)$ associates to the belief $y$ a distribution over finite observed histories that have $y$ as end-belief. We now consider for every $y\in X$, $\sigma^l_*(y)$ to be the projected image of $\mathcal{K}(y,.)$ by the mapping $\sigma^l: H^o_f \rightarrow \Delta(I)$ that associates to any finite history the distribution over the actions played.

\begin{lemma}
The image of $\mu^l$ by $\sigma^l_*$ is $\mu'^l$.
\end{lemma}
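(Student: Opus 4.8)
The plan is to unwind all the definitions and reduce the claim to the defining characterization of the image of a measure by a stationary strategy (Definition \ref{image}). Recall that $\mu'^l$ was defined as the image of $\theta'^l_f$ by $\xi$, and that $\theta'^l_f$ satisfies $\theta'^l_f(h,i,s)=\theta^l_f(h)\,q(\xi(h),i)(s)\,\sigma^l(h)(i)$. The target measure is the image of $\mu^l$ by $\sigma^l_*$, which by Definition \ref{image} is the projected image of $\mu^l$ by $\sigma^{l,\sharp q}_*$. So I would fix a measurable test function $f:X\to[-1,1]$ and compute $\int_X f\,d(\text{image of }\mu^l\text{ by }\sigma^l_*)$, aiming to show it equals $\int_X f\,d\mu'^l=\int_{H^o_f} f(\xi(h,i,s))\,\theta'^l_f(dh,di,ds)$.

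First I would expand the left-hand side using the projected-image characterization: it equals $\int_X\bigl(\int_X f(x')\,\sigma^{l,\sharp q}_*(y)(dx')\bigr)\mu^l(dy)$, where by the formula in Definition \ref{image},
\[
\sigma^{l,\sharp q}_*(y)=\sum_{(k,i,s)} y(k)\,\sigma^l_*(y)(i)\,q(k,i)(s)\,\delta_{\overline q(y,i,s)}.
\]
Next I would substitute the definition of $\sigma^l_*(y)$ as the projected image of the disintegration kernel $\mathcal K(y,\cdot)$ by $\sigma^l$, so that $\sigma^l_*(y)(i)=\int_{H^o_f}\sigma^l(h)(i)\,\mathcal K(y,dh)$. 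The key move is then to reassemble the $\mu^l(dy)$-integral against $\mathcal K(y,dh)$ into an integral against $\theta^l_f$: by the defining property of the disintegration $\overline{\theta^l_f}$ with respect to its last coordinate, together with the fact that $\mu^l$ is the image of $\theta^l_f$ by $\xi$, integrating $\mathcal K(y,dh)$ against $\mu^l(dy)$ recovers $\theta^l_f(dh)$ and replaces each occurrence of $y$ by $\xi(h)$. This collapses the double integral into $\int_{H^o_f}\sum_{(i,s)}\xi(h)(k)\cdots$ against $\theta^l_f(dh)$, which by the explicit formula for $\theta'^l_f$ is exactly $\int f(\xi(h,i,s))\,\theta'^l_f$.

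The main obstacle I expect is the bookkeeping in the disintegration step: one must verify that substituting $\mathcal K(y,\cdot)$ for $y$ and then integrating against $\mu^l$ faithfully reproduces $\theta^l_f$ while simultaneously forcing the constraint $\xi(h)=y$, so that $y(k)$ in the belief-dynamics formula can be correctly identified with $\xi(h)(k)$. Concretely, one has to check that $\overline q(\xi(h),i,s)$ matches the end-belief $\xi(h,i,s)$ of the one-stage-longer observed history $(h,i,s)$ — that is, that the belief update built into $\sigma^{\sharp q}$ agrees with the map $\xi$ applied after appending $(i,s)$. This is precisely the Bayesian consistency of the belief dynamics, and once it is in place the whole computation telescopes. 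I would therefore spend care stating that $\overline q(\xi(h),i,s)=\xi(h,i,s)$ and that the $q(k,i)(s)$-weights line up, after which matching the two integrals is a routine identification.
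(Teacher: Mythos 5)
Your proposal is correct and follows essentially the same route as the paper's proof: expand the image of $\mu^l$ by $\sigma^l_*$ via the $\sigma^{\sharp q}$ formula, unwind $\sigma^l_*$ through the disintegration kernel $\mathcal{K}$, use the defining property of $\overline{\theta^l_f}$ to collapse the double integral into one against $\theta^l_f$ with $y$ identified with $\xi(h)$, and conclude with the Bayes-rule identity $\overline{q}(\xi(h),i,s)=\xi(h,i,s)$ to match the characterization of $\mu'^l$. The two steps you flag as the main obstacles (the disintegration bookkeeping and the Bayesian consistency of the belief update) are precisely the two steps the paper makes explicit.
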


\begin{proof}
Recall that $\mu'^l$ is the image of $\theta'^l_f$ by $\xi$: hence, it is the unique measure such that for every measurable function on $X$,
\begin{align*}
\int_{x\in X} f(x) \mu'^l(dx) 
&= \int_{(h,i,s) \in H^o_f\times I \times S} f(\xi(h,i,s)))\theta'^l_f(d(h,i,s)),\\
&= \sum_{(i,s)\in I\times S} \left( \int_{h \in H^o_f} f(\xi(h,i,s))q(\xi(h),i)(s)\sigma^l(h)(i)\theta^l_f(dh)\right).
\end{align*}
Define $\nu$ as the image of $\mu^l$ by the strategy $\sigma_*^l$ and check that it satisfies the characterization of $\mu'^l$. 
Let $f:X \rightarrow \m{R}$ measurable. By definition of the image of $\mu^l$ by the strategy $\sigma^l_*$, 
\begin{align*}
\int_{y\in X} f(y)\nu(dy) & =\int_{x\in X} \left( \int_{y\in X} f(y) (\sigma^l_*)^{\sharp q}(x)(dy) \right) \mu^l(dx),\\
& =\int_{x\in X} \left( \sum_{(k',i,s)\in K\times I \times S} f(\overline{q}(x,i,s)) \sigma^l_*(x)(i)q(k',i)(s)x(k') \right) \mu^l(dx),\\
& =\sum_{(i,s)\in I \times S} \left( \int_{x\in X} f(\overline{q}(x,i,s)) \sigma^l_*(x)(i)q(x,i)(s) \mu^l(dx)\right).
\end{align*}
It follows by definition of $\sigma^l_*$ as the projected image of $\mathcal{K}(x,dh)$ by $\sigma^l$ that
\begin{align*}
\int_{y\in X} f(y)\nu(dy) & =\sum_{(i,s)\in I \times S} \left( \int_{x\in X} \left(\int_{h\in H^o_f} f(\overline{q}(x,i,s)) \sigma^l(h)(i)q(x,i)(s) \mathcal{K}(x,dh) \right) \mu^l(dx)\right),\\
& =\sum_{(i,s)\in I \times S} \left( \int_{(h,x)\in H^o_f\times X} f(\overline{q}(x,i,s)) \sigma^l(h)(i)q(x,i)(s) \overline{\theta^l_f}(dh,dx) \right),\\
& =\sum_{(i,s)\in I \times S} \left( \int_{h\in H^o_f} f(\overline{q}(\xi(h),i,s)) \sigma^l(h)(i)q(\xi(h),i)(s) \theta^l_f(dh) \right).
\end{align*}

By Bayes rule, we have
$
\overline{q}(\xi(h),i,s)=\xi(h,i,s).
$
It implies that
\begin{align*}
\int_{y\in X} f(y)\nu(dy) & =\sum_{(i,s)\in I \times S} \left( \int_{h\in H^o_f} f(\xi(h,i,s)) q(\xi(h),i)(s)\sigma^l(h)(i)\theta^l_f(dh) \right).
\end{align*}

\noindent We recognize the characterization of $\mu'_l$, and the lemma is proved. 
\end{proof}

\section{Proof of Theorem \ref{limsup}}

In this section, we establish the proof of Theorem \ref{limsup}. First, we replace Proposition \ref{titi} by a direct comparison between the $\liminf$ payoff and the $\limsup$-belief payoff. 
Second, we introduce an appropriate sequence of evaluations and apply Lemma \ref{limit_lemma_1} to prove the other inequality.

\subsection{A history-dependent evaluation that approximates the $\limsup$-evaluation}

Fix $x_1\in X$. As we have already seen, it is straightforward that
$\underline{\underline{v}}_\infty(x_1) \leq \overline{v}_\infty(x_1)$.
Let us prove the reverse inequality. Let $\varepsilon>0$ and $\sigma$ be an $\varepsilon$-optimal strategy in $\overline{\Gamma}_\infty(x_1)$. We denote by $\pi$ the probability distribution generated by $x_1$ and $\sigma$ on $(K\times I \times S)^\infty$, previously denoted by $\m{P}_{x_1}^{\sigma}$.
Let $l \geq 1$. We define the integer random variable $\eta^l$:
 \begin{equation}
\eta^l:=\inf \left\{n' \geq l \ | \ \frac{1}{n'} \sum_{m=1}^{n'} r(x_m)  \geq \limsup_{N \rightarrow+\infty} \frac{1}{N} \sum_{m=1}^N r(x_m)-\frac{1}{l} \right\}.
\end{equation}
By construction, $\eta^l$ is $\mathcal{F}^o$-measurable. Define the sequence of functions $(\theta^l_m)_{m\geq 1}$ such that for every $h_\infty \in H_\infty$ and for every $m\geq 1$,
\begin{align*}
\theta^l_m(h_\infty)& := \frac{1}{\eta^l(h_\infty)}\1_{m\leq \eta^l(h_\infty)}. 
\end{align*}
Notice that the evaluation $(\theta^l_m)_{m\geq 1}$ is not history-dependent. Nevertheless, one can  define 
$\rho^l_m=\E^{\sigma}_{x_1}\left( \theta^l_m |\mathcal{F}^o_m\right)$.
By construction, $(\rho^l_m)_{m\geq 1}$ is history-dependent and normalized in expectation at $(x_1,\sigma)$ and we have
\begin{align*}
\overline{v}_\infty(x_1) \leq \E^{\sigma}_{x_1} \left( \limsup_{N\rightarrow +\infty} \frac{1}{N} \sum_{m=1}^{+\infty} r(x_m) \right) +\varepsilon
\leq  \gamma_{\theta^l}(x_1,\sigma)+\varepsilon+\frac{1}{l}
= \gamma_{\rho^l}(x_1,\sigma)+\varepsilon+\frac{1}{l}. 
\end{align*}
    
\noindent Moreover, we can control the irregularity of the sequence $(\rho^l)_{l \geq 1}$.

\begin{proposition}\label{irregularity}
The sequence $(I(\rho^l,x_1))_{l\geq 1}$ goes to $0$ when $l$ goes to infinity.
\end{proposition}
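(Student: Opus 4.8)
The plan is to estimate the irregularity under the law generated by the fixed strategy $\sigma$, namely $I(\rho^l,x_1,\sigma)=\E^{\sigma}_{x_1}\big(|\rho^l_1|+\sum_{m\ge1}|\rho^l_m-\rho^l_{m+1}|\big)$, and to show it is $O(1/\sqrt{l})$; this is precisely the quantity that the subsequent application of Lemma~\ref{limit_lemma_1} (with $\sigma^l=\sigma$) calls upon. Writing $\theta^l_m=\tfrac1{\eta^l}\1_{\eta^l\ge m}$ one has the pathwise identity $\theta^l_m-\theta^l_{m+1}=\tfrac1m\1_{\eta^l=m}$, and the tower property yields
\[
\rho^l_m-\rho^l_{m+1}=\E^{\sigma}_{x_1}\big(\theta^l_m-\theta^l_{m+1}\mid\mathcal F^o_m\big)-D^l_m,\qquad D^l_m:=\rho^l_{m+1}-\E^{\sigma}_{x_1}\big(\rho^l_{m+1}\mid\mathcal F^o_m\big).
\]
I would split the argument into the \emph{intrinsic} first term, which is trivially small, and the \emph{conditioning-fluctuation} term $D^l_m$, which is the heart of the matter.

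\emph{The easy parts.} Since $\eta^l\ge l$ we have $0\le\rho^l_m\le\E^{\sigma}_{x_1}(1/\eta^l\mid\mathcal F^o_m)\le 1/l$; in particular $\E^{\sigma}_{x_1}|\rho^l_1|\le 1/l$. For the intrinsic term, conditional Jensen together with $\sum_m\tfrac1m\1_{\eta^l=m}=1/\eta^l$ gives
\[
\sum_{m\ge1}\E^{\sigma}_{x_1}\big|\E^{\sigma}_{x_1}(\theta^l_m-\theta^l_{m+1}\mid\mathcal F^o_m)\big|\le\E^{\sigma}_{x_1}\sum_{m\ge1}|\theta^l_m-\theta^l_{m+1}|=\E^{\sigma}_{x_1}(1/\eta^l)\le 1/l.
\]
Both contributions are thus $O(1/l)$, and everything reduces to proving $\E^{\sigma}_{x_1}\sum_{m\ge1}|D^l_m|\to0$.

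\emph{An $\ell^2$ bound, and its upgrade.} Because $D^l_m$ is the innovation of $\rho^l_{m+1}$ and $\rho^l_{m+1}\le1/l$,
\[
\sum_{m\ge1}\E^{\sigma}_{x_1}(D^l_m)^2\le\sum_{m\ge1}\E^{\sigma}_{x_1}(\rho^l_{m+1})^2\le\frac1l\sum_{m\ge1}\E^{\sigma}_{x_1}(\rho^l_{m+1})=\frac1l\,\E^{\sigma}_{x_1}\Big(\sum_{m\ge2}\theta^l_m\Big)\le\frac1l.
\]
The difficulty is to convert this $\ell^2$-summability into $\ell^1$-summability of the total variation. For this I would introduce the bounded martingale $N^l_n:=\E^{\sigma}_{x_1}(1/\eta^l\mid\mathcal F^o_n)$, whose quadratic variation obeys $\E^{\sigma}_{x_1}[N^l]_\infty\le\E^{\sigma}_{x_1}(1/\eta^l)^2\le 1/l^2$. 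When $\eta^l$ is a stopping time for $(\mathcal F^o_n)$ the factorisation $\rho^l_m=\1_{\eta^l\ge m}N^l_m$ holds, so that
\[
\sum_{m\ge1}|\rho^l_m-\rho^l_{m+1}|\le\sum_{m<\eta^l}|N^l_{m+1}-N^l_m|+\sum_{m\ge1}\1_{\eta^l=m}N^l_m,
\]
the last sum being at most $1/l$. A pathwise Cauchy--Schwarz bounds the martingale total variation by $\sqrt{\eta^l}\,\sqrt{[N^l]_\infty}$, whence $\E^{\sigma}_{x_1}\sum_{m<\eta^l}|N^l_{m+1}-N^l_m|\le\sqrt{\E^{\sigma}_{x_1}\eta^l}\cdot\sqrt{\E^{\sigma}_{x_1}[N^l]_\infty}\le\sqrt{\E^{\sigma}_{x_1}\eta^l}\,/\,l$.

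\emph{Main obstacle.} Two points remain, and they carry the whole weight of the proof. First, $\eta^l$ is \emph{not} a stopping time: it is defined through $\limsup_N\frac1N\sum_{m\le N}r(x_m)$, which depends on the entire future, so the factorisation $\rho^l_m=\1_{\eta^l\ge m}N^l_m$ is only approximate. I would handle this by isolating the portion of the trajectory after the (almost surely finite) time at which the limsup is determined, and using that $\eta^l\ge l\to\infty$ renders the non-adapted portion negligible for large $l$. Second, and most importantly, the bound $\sqrt{\E^{\sigma}_{x_1}\eta^l}\,/\,l$ tends to $0$ only if $\E^{\sigma}_{x_1}\eta^l=o(l^2)$; establishing such a tail estimate on $\eta^l$ --- the first stage beyond $l$ at which the Cesàro average returns to within $1/l$ of its limsup --- is exactly where the \emph{precise martingale inequalities} enter, and I expect this moment control of $\eta^l$ to be by far the hardest step. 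Combining the four displays then gives $I(\rho^l,x_1,\sigma)=O(1/\sqrt{l})\to0$.
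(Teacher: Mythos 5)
Your reduction to the martingale-fluctuation term is sound as far as it goes (the bound $\rho^l_m\le\min(1/l,1/m)$, the supermartingale identity, and the $O(1/l)$ estimates for $|\rho^l_1|$ and the intrinsic term all match Equation (\ref{majoration}) and the computation preceding Proposition \ref{imp_limsup} in the paper), but the step that carries all the weight fails, and you have correctly located it yourself. Your final bound is $\sqrt{\E^{\sigma}_{x_1}(\eta^l)}/l$, which requires $\E^{\sigma}_{x_1}(\eta^l)=o(l^2)$. No such moment estimate holds in general, and it cannot be established: $\eta^l$ is the first time $n'\ge l$ at which the running average returns within $1/l$ of its limsup, and in the very situations this theorem is designed for, the running average oscillates on super-exponentially long blocks (see the strategy in Proposition \ref{strictly}: blocks of length $2^{n^2}$), so $\eta^l$ is typically doubly exponential in $l$ and may well have infinite expectation. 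Assuming a polynomial moment bound on $\eta^l$ essentially begs the question. Your other patch is also not meaningful: the limsup is a tail functional, so there is no almost-surely finite time ``at which the limsup is determined''; consequently $\eta^l$ is not approximately adapted in any quantifiable sense, and the factorisation $\rho^l_m=\1_{\eta^l\ge m}N^l_m$ on which your Cauchy--Schwarz rests cannot be restored.

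The paper's proof converts the $\ell^2$ control into $\ell^1$ control without ever touching the tail of $\eta^l$, and this is the idea your proposal is missing. The two ingredients are: (i) the \emph{pathwise, $m$-local} bound $\rho^l_m\le\min(1/l,1/m)$, which makes the telescoped square-difference identity
\begin{equation*}
\sum_{t=m}^{n}\E^{\sigma}_{x_1}\bigl((\rho^l_{t+1}-\rho^l_t)^2\bigr)
=\E^{\sigma}_{x_1}\bigl((\rho^l_{n+1})^2-(\rho^l_m)^2\bigr)
+2\sum_{t=m}^{n}\E^{\sigma}_{x_1}\Bigl(\tfrac{\rho^l_t}{t}\1_{t=\eta^l}\Bigr)\le \tfrac{4}{m^2}
\end{equation*}
uniform over the horizon $n$ (the indicator events $\{t=\eta^l\}$ are disjoint, so their sum costs only $1/m^2$, no tail estimate needed); and (ii) Cauchy--Schwarz applied \emph{blockwise} on blocks $B_k$ of length $k$ starting at $t_{k-1}+1\approx k^2/2$, which turns the above into block contributions $a_k=4\sqrt{k}/(k(k-1)+2)\sim 4k^{-3/2}$, a convergent series. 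The irregularity is then bounded by $1/l+2\sqrt{l-1}/l+\sum_{t\ge\phi(l)}a_t$ with $\phi(l)\to\infty$, and all three terms vanish. In short: your global Cauchy--Schwarz forces you to pay $\sqrt{\E\,\eta^l}$, whereas the paper's blockwise Cauchy--Schwarz pays only the block lengths, which are chosen to grow slower than the square of their starting points; this is what makes the argument independent of any quantitative information about $\eta^l$.
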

Assume that this proposition holds. By  Lemma \ref{limit_lemma_1}, we have
$\limsup_{l \rightarrow +\infty} \gamma_{\rho^l}(x_1,\sigma) \leq \underline{\underline{v}}_\infty(x_1)$.
Thus, $\overline{v}_\infty(x_1) \leq \underline{\underline{v}}_\infty(x_1)+\varepsilon$, and since $\varepsilon$ is arbitrary, $\overline{v}_\infty(x_1) \leq \underline{\underline{v}}_\infty(x_1)$, and Theorem \ref{limsup} is proved. 

\subsection{Control of the irregularity: Proof of Proposition \ref{irregularity}}

In order to finish the proof of Theorem \ref{limsup}, we show that the irregularity of $\rho^l$ goes to $0$ when $l$ goes to infinity. It relies on the fact that the sequence $(\rho^l_m)_{m\geq 1}$ is a super-martingale such that for every $m\geq 1$, 
\begin{align}\label{majoration}
|\rho^l_m|\leq \min\left(\frac{1}{l},\frac{1}{m}\right).
\end{align}

Let $n \geq 1$. By definition of $\rho^l_n$, one has
\begin{align*}
\m{E}_{x_1}^{\sigma} \left(\rho^l_{n+1} \left| \mathcal{F}_n \right. \right) &  = \m{E}_{x_1}^{\sigma} \left(\m{E}_{x_1}^{\sigma}\left( \frac{1}{\eta^l} \1_{n+1\leq \eta^l} \left| \mathcal{F}_{n+1} \right. \right) \left| \mathcal{F}_n \right. \right),\\
& = \m{E}_{x_1}^{\sigma} \left( \frac{1}{\eta^l} \1_{n\leq \eta^l} \left| \mathcal{F}_n \right. \right) - \m{E}_{x_1}^{\sigma} \left( \frac{1}{\eta^l} \1_{n=\eta^l} \left| \mathcal{F}_n \right. \right),\\
& = \rho^l_n-\frac{1}{n} \m{E}_{x_1}^{\sigma} \left( \1_{n=\eta^l} \left| \mathcal{F}_n \right. \right).
\end{align*}
since $\{n \leq \eta^l\}=\{n <\eta^l\} \cup\{n=\eta^l\}=\{n+1\leq \eta^l\} \cup \{n=\eta^l\}$.%
The previous computation implies the following equality for the square difference: for every stage $n\geq 1$,
\begin{align*}
\m{E}_{x_1}^{\sigma} \left( \left(\rho^l_{n+1}-\rho^l_n \right)^2 \right)  
 & = \m{E}_{x_1}^{\sigma} \left( (\rho^l_{n+1})^2 \right) -2\m{E}_{x_1}^{\sigma}\left( \rho^l_n \m{E}_{x_1}^{\sigma}\left( \rho^l_{n+1}\left| \mathcal{F}_n \right. \right)\right)+ \m{E}_{x_1}^{\sigma} \left( (\rho^l_n)^2 \right), \\
 & = \m{E}_{x_1}^{\sigma} \left( (\rho^l_{n+1})^2 \right) -2\m{E}_{x_1}^{\sigma}\left( (\rho^l_n)^2- \frac{\rho^l_n}{n} \m{E}_{x_1}^{\sigma}\left( 1_{n=\eta^l} \left| \mathcal{F}_n \right. \right)\right)+ \m{E}_{x_1}^{\sigma} \left( (\rho^l_n)^2 \right),\\
 &= \m{E}_{x_1}^{\sigma} \left( (\rho^l_{n+1})^2  - (\rho^l_n)^2 \right)+2 \m{E}_{x_1}^{\sigma}\left(\frac{\rho^l_n}{n} 1_{n=\eta^l}\right).
\end{align*}

\begin{proposition} \label{imp_limsup}
$I(\theta^l,x_1)$ converges to $0$ when $l$ goes to infinity.
\end{proposition}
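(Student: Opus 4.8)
The plan is to prove this by a direct, pathwise computation of the total variation of $\theta^l$, exploiting the fact that the lower bound $\eta^l \geq l$ is built into the definition of $\eta^l$. Unlike the analysis of the conditioned process $\rho^l$ in Proposition~\ref{irregularity}, no martingale argument is needed here: the evaluation $\theta^l$ has an explicit block structure that makes its total variation computable in closed form on each play.

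First I would record the two elementary facts about $\eta^l$ on which the argument rests. On every play $\eta^l \geq l$, which is immediate from the definition $\eta^l = \inf\{n' \geq l \mid \cdots\}$; and $\eta^l$ is finite, which follows from the definition of the limit superior. Indeed, extracting a subsequence $(n'_k)$ along which $\frac{1}{n'_k}\sum_{m=1}^{n'_k} r(x_m)$ converges to $\limsup_{N\to+\infty}\frac{1}{N}\sum_{m=1}^N r(x_m)$, for $k$ large one has simultaneously $n'_k \geq l$ and $\frac{1}{n'_k}\sum_{m=1}^{n'_k} r(x_m) \geq \limsup_{N\to+\infty}\frac{1}{N}\sum_{m=1}^N r(x_m) - \frac{1}{l}$, so the infimum is taken over a nonempty set and $\theta^l$ is a genuine evaluation.

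Next I would compute the pathwise quantity $|\theta^l_1| + \sum_{m \geq 1}|\theta^l_m - \theta^l_{m+1}|$. Since $\theta^l_m = \frac{1}{\eta^l}\1_{m \leq \eta^l}$ is constant equal to $1/\eta^l$ for $1 \leq m \leq \eta^l$ and vanishes for $m > \eta^l$, all consecutive differences are zero except the single jump at $m = \eta^l$, where $|\theta^l_{\eta^l} - \theta^l_{\eta^l+1}| = 1/\eta^l$. Together with $|\theta^l_1| = 1/\eta^l$ (valid because $\eta^l \geq l \geq 1$), this gives exactly $|\theta^l_1| + \sum_{m \geq 1}|\theta^l_m - \theta^l_{m+1}| = 2/\eta^l$ on every play, hence at most $2/l$ by the lower bound on $\eta^l$.

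Finally I would take expectations and pass to the supremum over strategies. For any strategy $\sigma'$ we obtain $I(\theta^l, x_1, \sigma') = \E_{x_1}^{\sigma'}(2/\eta^l) \leq 2/l$, and since this bound is independent of $\sigma'$, we conclude $I(\theta^l, x_1) = \sup_{\sigma'} I(\theta^l, x_1, \sigma') \leq 2/l$, which tends to $0$ as $l \to +\infty$. There is no genuine obstacle in this argument; the only point deserving care is the finiteness of $\eta^l$, that is, that $\theta^l$ is a well-defined evaluation, which is exactly the limsup extraction above. The contrast with Proposition~\ref{irregularity} is instructive: conditioning $\theta^l$ on the observed history to form $\rho^l_m = \E_{x_1}^{\sigma}(\theta^l_m \mid \mathcal{F}^o_m)$ destroys this clean block structure, so the total variation of $\rho^l$ can no longer be read off pathwise and must instead be extracted from the quadratic-variation identity and martingale estimates of the surrounding subsection.
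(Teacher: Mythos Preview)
Your pathwise computation is correct and yields the clean bound $I(\theta^l,x_1)\le 2/l$ for the statement as literally written. The issue is that the statement contains a typo. Proposition~\ref{imp_limsup} sits inside the subsection entitled ``Control of the irregularity: Proof of Proposition~\ref{irregularity}'', and the paper's own proof of it works exclusively with $\rho^l$: the supermartingale identity $\E_{x_1}^\sigma(\rho^l_{n+1}\mid\mathcal F_n)=\rho^l_n-\frac1n\,\E_{x_1}^\sigma(\1_{n=\eta^l}\mid\mathcal F_n)$, Cauchy--Schwarz on blocks, and the bounds $|\rho^l_m|\le\min(1/l,1/m)$. The symbol ``$\theta^l$'' in the proposition's statement (and again in the final displayed inequality of the paper's proof) should read ``$\rho^l$''; Proposition~\ref{imp_limsup} \emph{is} the body of the proof of Proposition~\ref{irregularity}, not a separate claim about the unconditioned evaluation.

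You anticipated exactly this in your closing paragraph: conditioning on $\mathcal F^o_m$ destroys the block structure, so the total variation of $\rho^l$ cannot be read off pathwise and one must extract it from the quadratic-variation identity and martingale estimates. That is precisely the content the paper places under this label. So your argument proves a true but much easier fact than the one actually needed to invoke Lemma~\ref{limit_lemma_1} on $\rho^l$ and close the proof of Theorem~\ref{limsup}; the paper's typo led you to the wrong target.
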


\begin{proof}
We first recall a classical result on martingales, obtained by using Cauchy-Schwartz inequality:
\begin{align}\label{majoration_block_m}
\m{E}_{x_1}^{\sigma}\left( \sum_{t=m}^n |\rho^l_{t+1}-\rho^l_t | \right) & \leq \sqrt{ \m{E}_{x_1}^{\sigma}\left( \sum_{t=m}^n (\rho^l_{t+1}-\rho^l_t )^2 \right) \m{E}_{x_1}^{\sigma}\left( \sum_{t=m}^n 1 \right)}, \\
& \leq \sqrt{n-m+1} \sqrt{ \m{E}_{x_1}^{\sigma}\left( (\rho^l_{n+1})^2-(\rho^l_{m})^2\right)+ 2 \sum_{t=m}^{n} \m{E}_{x_1}^{\sigma}\left(\frac{\rho^l_t}{t} 1_{t=\eta^l} \right)},\\
& \leq \sqrt{n-m+1} \sqrt{ \m{E}_{x_1}^{\sigma}\left( \frac{1}{m^2}+\frac{1}{m^2}+2\frac{1}{m^2} \right)},\\
& \leq 2\frac{\sqrt{n-m+1}}{m}. \label{majoration_block_m:5}
\end{align}
where the second inequality is deduced from the previous computation and a telescopic summation. The third inequality stems from the fact that, by assumption, $|\theta^l_{t}|$ is bounded from above on $\left\{m, \dots, n \right\}$ by $\frac{1}{m}$ (Equation (\ref{majoration})).
 Since $|\theta^l_{t}|$ is also bounded from above by $\frac{1}{l}$, Equation (\ref{majoration}) and similar computations yield 
\begin{align}\label{majoration_block_l}
\m{E}_{x_1}^{\sigma}\left( \sum_{t=m}^n |\rho^l_{t+1}-\rho^l_t | \right) & \leq 2\frac{\sqrt{n-m+1}}{l}.
\end{align}

We now split the set of integers into a sequence of blocks $B_k$ such that $B_k$ has length $k$ and then apply on each block the previous equations. The sequence of blocks is defined as follows: $B_1=\left\{\rho^l_1 \right\}$,
and for every $k\geq 2$, $B_k=\left\{t_{k-1}+1,\dots,t_k \right\}$, where $t_{k}=k(k+1)/2$.
\\
On each $B_k$, we apply inequality (\ref{majoration_block_m:5}) and obtain
\begin{align*}
\m{E}_{x_1}^{\sigma}\left( \sum_{t=t_{k-1}+1}^{t_{k}} |\rho^l_{t+1}-\rho^l_t | \right) & \leq \frac{2\sqrt{k}}{  (t_{k-1}+1)}
 = \frac{4\sqrt{k}}{k(k-1)+2}.
\end{align*}
Let $a_k= \frac{4\sqrt{k}}{k(k-1)+2}$, $k \geq 1$. The serie $(a_k)_{k\geq 1}$ is converging,  hence $I(\theta^l,\sigma,x_1)$ is finite and moreover for every $m\geq 1$,
\[
\m{E}_{x_1}^{\sigma}\left(\sum_{t=m}^{+\infty} |\rho^l_{t+1}-\rho^l_t | \right) \leq \sum_{t=\phi(m)}^{+\infty} a_t,
\]
for some $\phi(m)$ such that $\phi(m)$ is increasing to $+\infty$. Using that $|\theta^l_{t}|$ is bounded by above over the interval $1$ to $l-1$ by $\frac{1}{l}$, we deduce that
\begin{align*}
I(\theta^l,x_1,\sigma) & \leq \m{E}_{x_1}^{\sigma} \left( \rho^l_1+ \sum_{t=1}^{l-1} |\rho^l_{t+1}-\rho^l_t | + \sum_{t=l}^{+\infty} |\rho^l_{t+1}-\rho^l_t | \right),\\
& \leq \frac{1}{l}+\frac{2\sqrt{l-1}}{l}+ \sum_{t=\phi(l)}^{+\infty} a_t,
\end{align*}
where we apply inequality (\ref{majoration_block_l}) to the block $\left\{1,\dots, l-1\right\}$ and inequality (\ref{majoration_block_m:5}) to the other blocks. 
The three terms converge to $0$ as $l$ goes to infinity, and Proposition \ref{imp_limsup} is proved.
\end{proof}

\section*{Acknowledgments}
Venel gratefully  acknowledges   the support of the Agence Nationale de la Recherche, under grant ANR CIGNE, ANR-15-CE38-0007.
\\
This research benefited from the support of the FMJH Program PGMO ``Regularization of stochastic games" and from the support
of EDF, Thales, Orange and Criteo.

\begin{small}
\bibliography{pomdp}
\end{small}

\end{document}